\newtheorem{lem}{Lemma}[section] 
\newtheorem{prop}{Proposition}[section]
\newtheorem{cor}{Corollary}[lem]
\theoremstyle{definition}
\newtheorem{dfn}{Definition}[section]
\numberwithin{equation}{section}
\numberwithin{table}{section}
\numberwithin{figure}{section}
\newcommand{\Y}[1]{{\bf(Y#1)}}
\title{State-Space Representation of Hysteresis Systems\\ 
  Exhibiting the Return Point Memory}
\author{S.E.Langvagen\thanks{Electronic address: sergey.langwagen@gmail.com}}
\affil{\small Chernogolovka, Moscow Region}
\date{December 30, 2016}
\begin{document}

\maketitle

\begin{abstract}
  Application of the minimal state-space realization to hysteresis
  systems is studied. The method allows to construct the space of
  states and establish the state transition rules using the {\em input
    equivalence}, which can be obtained for hysteresis systems basing
  on rate independence and the return point memory.
\end{abstract}

\tableofcontents

\section{Introduction}

The science of hysteresis is a science about systems that
demonstrate similar external behavior, but can have a very different
internal structure. Hysteresis is observed in magnetism,
elastoplasticity, ferroelectricity, superconductivity, and other
branches of science \cite{Bertotti&Mayergoyz2006v3}.

For hysteresis systems, the current output depends on the previous
history of the input,
\begin{equation}\label{hyst-op}
  y(t') = {\cal W}[u](t'),\quad t' \in [t_0, t],
\end{equation}
where $u$ and $y$ are input and output functions of time $t'$; ${\cal
  W}$ is called hysteresis operator.  Currently the most common and
efficient approach to mathematical description of hysteresis uses the
models based on the ensembles of simple hysteresis operators like
relay operator, play operator or stop operator
\cite{Krasnoselskii&Pokrovskii1983, Mayergoyz2003,
  Brokate&Sprekels1996ch2, Visintin2006}.

This work studies application of the method known in system theory
as ``minimal state-space realization'' to the hysteresis systems. The
method does not use the decomposition of hysteresis operator or
modeling the internal structure of the system.

The minimal realization is outlined in \cite{Willems1972} as the
``realization that is obtained by considering as the state at time $t$
the equivalence class of those inputs up to time $t$ which yield the
same output after the time $t$ regardless of how the input is
continued after time $t$''.  The minimal state-space realization has
applications in control theory of linear systems
\cite{Polderman&Willems1996ch6}. In finite automata theory, the
similar notion is known as the Nerode equivalence
\cite{Kalman&all1969ch7}. The idea to label metastable states of
hysteresis systems by the field history was proposed in
\cite{Xing2007}.

Let $x(t)$ be a state of the system at time $t$, as it is defined
for the minimal realization. Then, instead of
(\ref{hyst-op}), we have
\begin{equation}\label{fread-out}
  y(t) = f(x(t), u(t)), 
\end{equation}
where the output $y$, the input $u$, and the state $x$ are taken at
the same time instance $t$; $f$ is called {\em read-out function}
\cite{Willems1972}.  The state $x$ comprises information about the
previous history. The subsequent behavior of the system is the same
for inputs that belong to the same equivalence class. Thus, we can
select in each equivalence class one representative input and use
these inputs to introduce coordinates in the space of states. If any
input that belongs to a class $x(t)$ is prolonged up to a time
instance $t + s$, the new input belongs to the equivalence class
$x(t+s)$, which depends on $x(t)$ and on the input $u$ in the time
interval tween $t$ and $t+s$. If we know the coordinates of the old
state, we can get the coordinates of the new state, i.e., the input
equivalence determines the state transition law (see
Appendix~\ref{app:state-space}).

The representation of states as the equivalence classes is well suited
for hysteresis systems, because the needed equivalence relation can be
determined by two well known characteristic properties of hysteresis
--- the rate independence and the return point memory, also known as
wiping out \cite{Mayergoyz2003, Bertotti1998, Brokate&Sprekels1996ch2,
  Visintin2006}. In this article, the consideration is restricted to
the magnetic hysteresis in order to include the demagnetized state
into the scope.

\section{Rate Independence and the Return Point Memory}
\label{sec:RPM}

Consider the behavior of a hysteresis system under slow varying inputs
$H(t')$, $t_0 \leq t'\leq t$, where $H$ is the magnetizing field;
beginning and end times $t_0$ and $t$ may differ for different inputs.
Let the experiments be performed as follows:
\begin{enumerate}[label=(\roman*)]

\item The system is put into the demagnetized state before the beginning
  of each experiment $t_0$;

\item Admissible inputs ${\cal U}^*$ are continuous piecewise-linear
  functions of time $H(t')$, $t'\in(-\infty,t]$, with a finite number
  of segments. It is assumed that $H(t') = 0$ before $t_0$, and
  $|H(t')| \leq H_{max}$ for all inputs;

\item A set $Y$ of output variables is selected. During each
  experiment, the variables included in this set are measured.

\end{enumerate}
For the simplicity, only one output variable $y$ will be considered,
which can be any variable in $Y$.  It seems to be reasonable to expect
that $y$ can represent not only the magnetization $M$ but also other
macroscopic characteristics of the system, such as magnetostrictive
deformation and tension, thermodynamic properties, e.g., free energy,
etc.

Let us denote by $H^t$ the input $H(t')$ with the end time $t$. The
input $H^{t+s}$, $s\geq 0$, is called {\em prolongation} of the input
$H^t$ if $H^{t+s}(t') = H^t(t')$ for all $t'\in (-\infty,t]$. 

It must be clear that, if the final values of the inputs $H^t_1,\,
H^t_2\in{\cal U}^*$ are equal then for any prolongation $H^{t+s}_1$ of
$H^t_1$ there exists a prolongation of the input $H^t_2$ such that
$H^{t+s}_1(t') = H^{t+s}_2(t')$ for all $t'\in [t,t+s]$.  The inputs
$H^{t_1}_1,\, H^{t_2}_2 \in {\cal U}^*$ that have the same final value
of $H$ can be compared in the following way (cf.~Definitions~\ref
{dfn:prolong}, \ref{dfn:*equiv}):

\begin{enumerate}[label=(\roman*)]
\item Shift any of the inputs $H^{t_1}_1,\, H^{t_2}_2$ along the time
  axis, making the end times equal. Denote the shifted inputs by
  $H_1^t,\, H_2^t$;
\item Consider ``equal'' prolongations $H_1^{t+s}, H_2^{t+s}$ such
  that $H_1^{t+s}(t')= H_2^{t+s}(t')$ for all $t'\in[t,t+s]$, and compare
  the outputs $y^{t+s}_1(t'),\,y^{t+s}_2(t')$ in the time interval
  $[t, t+s]$;
\item If $y^{t+s}_1(t') = y^{t+s}_2(t')$ for all $t'\in[t,t+s]$ and
  for any ``equal'' prolongations $H_1^{t+s}(t'), H_2^{t+s}(t')$ then
  we say that the inputs $H^{t_1}_1,\, H^{t_2}_2$ are equivalent and
  write $H^{t_1}_1 \sim H^{t_2}_2$.
\end{enumerate}

The inputs that have different final values of $H$, i.e., such that
$H_1^{t_1}(t_1) \neq H^{t_2}_2(t_2)$, can not be compared and hence
can not be equivalent. The equivalence relation introduced above is
the binary equivalence relation (see Proposition
\ref{prop:*equiv}). Hence, it partitions the set of
admissible inputs into equivalence classes.

In the sequel, we understand the term ``state'' as the class of
equivalent inputs, using as interchangeable the terms ``the inputs are
equivalent'', ``the final states of the inputs are the same'', ``the
inputs belong to the same state''.

\begin{lem}\label{lem:h-prolong}
  Let $H_1^{t+s}, H_2^{t+s}$ be prolongations of equivalent inputs
  $H^t_1 \sim H^t_2$. If $H_1^{t+s}(t') = H_2^{t+s}(t')$ for all
  $t'\in[t,t+s]$, then $H_1^{t+s}\sim H_2^{t+s}$.
\end{lem}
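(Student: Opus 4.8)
The plan is to unwind the definition of $\sim$ directly. To establish $H_1^{t+s} \sim H_2^{t+s}$ I must verify two things: that the two prolonged inputs share the same final value of $H$, and that every pair of ``equal'' prolongations of them yields identical outputs on the relevant interval. The first is immediate, since $H_1^{t+s}(t') = H_2^{t+s}(t')$ holds on all of $[t,t+s]$ and in particular at $t' = t+s$, so the inputs are comparable. The whole content therefore lies in the output-agreement condition.

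First I would fix an arbitrary common end time $t+s+r$ with $r \geq 0$ and arbitrary prolongations $H_1^{t+s+r}, H_2^{t+s+r}$ of $H_1^{t+s}, H_2^{t+s}$ that agree on $[t+s, t+s+r]$; these are precisely the ``equal'' prolongations entering the definition of $H_1^{t+s} \sim H_2^{t+s}$. The key observation is that each $H_i^{t+s+r}$ is simultaneously a prolongation of $H_i^t$, since prolongation is transitive: $H_i^{t+s+r}$ extends $H_i^{t+s}$, which extends $H_i^t$.

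Next I would show that $H_1^{t+s+r}$ and $H_2^{t+s+r}$ in fact agree on the whole of $[t, t+s+r]$, not merely on $[t+s, t+s+r]$. On $[t+s, t+s+r]$ they agree by choice; on $[t, t+s]$, being prolongations, they coincide with $H_1^{t+s}$ and $H_2^{t+s}$ respectively, which agree there by hypothesis. Gluing these two subintervals yields agreement on all of $[t, t+s+r]$. This gluing step --- recognizing that a pair of ``equal'' prolongations of the longer inputs $H_1^{t+s}, H_2^{t+s}$ is automatically a pair of ``equal'' prolongations of the shorter equivalent inputs $H_1^t, H_2^t$ --- is the one point requiring care, and it is the crux of the argument.

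Finally I would invoke $H_1^t \sim H_2^t$: since $H_1^{t+s+r}, H_2^{t+s+r}$ are equal prolongations of the equivalent inputs $H_1^t, H_2^t$ agreeing on $[t, t+s+r]$, their outputs coincide throughout $[t, t+s+r]$, and \emph{a fortiori} on the subinterval $[t+s, t+s+r]$. As the prolongations to $t+s+r$ were arbitrary, the output-agreement condition for $H_1^{t+s} \sim H_2^{t+s}$ holds, which completes the proof. I note that the argument is purely formal, relying only on the definition of $\sim$ and the transitivity of prolongation; neither rate independence nor the return point memory is needed for this particular lemma.
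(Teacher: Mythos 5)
Your proof is correct and takes essentially the same route as the paper: the paper's own proof simply declares the lemma an obvious consequence of the definition and points to Lemma~\ref{lem:sprolong} in the appendix, whose proof is exactly your argument --- a pair of equal prolongations of $H_1^{t+s}$, $H_2^{t+s}$ is, by transitivity of prolongation and gluing of the agreement on $[t,t+s]$ with the agreement on $[t+s,t+s+r]$, a pair of equal prolongations of the equivalent inputs $H_1^t \sim H_2^t$, forcing the outputs to coincide. Your closing remark that neither rate independence nor RPM is needed is likewise consistent with the paper, which treats this as a purely formal property of the equivalence relation.
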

\begin{proof}
  The statement obviously follows from the above definition of
  equivalent inputs (cf. Lemma \ref{lem:sprolong}).
\end{proof}

For the piecewise-linear inputs $H^t\in{\cal U}^*$, the {\em rate
  independence} of the hysteresis system can be described as
follows. If the input changes linearly from the initial demagnetized
state, the final state depends on the final value of $H$ and does not
depend on the slope. The same is true for the second segment of the
input, and so on. This means that the following proposition holds
true:

\begin{prop}\label{prop:rate-indep}
  For rate-independent systems any input $H^t\in{\cal U}^*$ is
  equivalent to the piecewise-linear input $\tilde{H}^t\in{\cal U}^*$
  with positive and negative slopes alternating after $t_0$.  Inputs
  that have the same sequences of local maxima and minima are
  equivalent.
\end{prop}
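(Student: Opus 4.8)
The plan is to prove this in two stages, matching the two sentences of the proposition. First I would establish the core claim: that rate independence lets us replace any input by one with strictly alternating slopes having the same sequence of local extrema. Then I would argue the final sentence, that two inputs sharing the same extremum sequence are equivalent, as a consequence.

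For the first stage, I would formalize the informal description given just before the proposition. Rate independence says that when the input moves linearly from the demagnetized state to its first value, the resulting state depends only on that value, not on the slope or the elapsed time; and that this holds segment-by-segment thereafter. I would make this precise by induction on the number of linear segments of $H^t$. The base case is a single segment from $0$ to some value; rate independence gives equivalence to any other single segment reaching the same value. For the inductive step, I would take an input with $n$ segments, apply the inductive hypothesis to the prefix consisting of the first $n-1$ segments to get an equivalent input in alternating-slope normal form, and then use Lemma~\ref{lem:h-prolong} to conclude that appending the common final segment preserves the equivalence. Here the key reduction is that consecutive segments with the same sign of slope can be merged into one, and a segment of zero net change (a local extremum that is not actually an extremum) can be absorbed; this is exactly what collapses a general piecewise-linear input into one whose slopes genuinely alternate between positive and negative, so that its breakpoints are precisely its local maxima and minima.

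For the second stage, suppose $H_1^t$ and $H_2^t$ have identical sequences of local maxima and minima. By the first stage each is equivalent to its alternating normal form $\tilde H_1^t$, $\tilde H_2^t$, and these normal forms are determined up to reparametrization of time entirely by the common extremum sequence. Rate independence (invariance under time reparametrization within each segment) then makes $\tilde H_1^t \sim \tilde H_2^t$, and transitivity of the equivalence relation (Proposition~\ref{prop:*equiv}) closes the chain $H_1^t \sim \tilde H_1^t \sim \tilde H_2^t \sim H_2^t$. I would remark that one must handle the shared final value carefully: the definition of $\sim$ already requires equal final $H$-values before any comparison, and since the two inputs share their full extremum sequence including the last value, this is automatic.

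The main obstacle I expect is making the reduction to alternating slopes fully rigorous rather than merely plausible. The delicate points are (a) stating rate independence as a precise hypothesis strong enough to license merging same-sign segments and deleting non-extremal breakpoints, and (b) the bookkeeping of what counts as a local extremum at the endpoints and at the initial demagnetized value $H=0$ — in particular whether the starting point contributes an extremum and how monotone initial segments are treated. Everything else reduces to careful application of \emph{transitivity} together with Lemma~\ref{lem:h-prolong}, which already packages the statement that appending a common prolongation to equivalent inputs preserves equivalence.
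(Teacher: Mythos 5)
Your plan is correct and is essentially the paper's own approach: the paper offers no formal proof of this proposition, presenting it instead as the operational content of rate independence (``the final state depends on the final value of $H$ and does not depend on the slope\dots The same is true for the second segment of the input, and so on''), and your segment-by-segment induction, with Lemma~\ref{lem:h-prolong} to append the common final segment and transitivity (Proposition~\ref{prop:*equiv}) to chain normal forms, is precisely a rigorization of that remark. The obstacle you flag in point (a) is exactly why the paper states this as a proposition that ``holds true'' rather than proving it --- there is no more primitive formal definition of rate independence in the paper from which to derive it, so your segmentwise hypothesis plays the role of the postulate itself, consistent with how the paper likewise treats RPM via Definition~\ref{dfn:RPM}.
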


The other remarkable property typical to many hysteresis systems is
the return point memory (RPM), which is often considered as a property
of the Preisach model. Using compact definition given in
\cite{Sethna&all1993}, RPM can be expressed in terms of the input
equivalence; in this form RPM is completely independent of any
hysteresis model and can be considered as a property of the hysteresis
operator (\ref{hyst-op}), or as a description of the experimental
behavior of the system. This behavior is essentially the same as
expressed by the Madelung rules, noticed over a century ago
\cite{Madelung1905ger}, see also \cite{Brokate&Sprekels1996ch2}. The
return point memory is not precise due to the so-called accommodation
\cite{Bertotti1998}, but, in many cases, the disagreements can be
considered as not very significant.  The definition given in
\cite{Sethna&all1993} is presented below in a slightly changed form as
a proposition.

\begin{figure}[H]
  \begin{center}
    \includegraphics[scale=0.6]{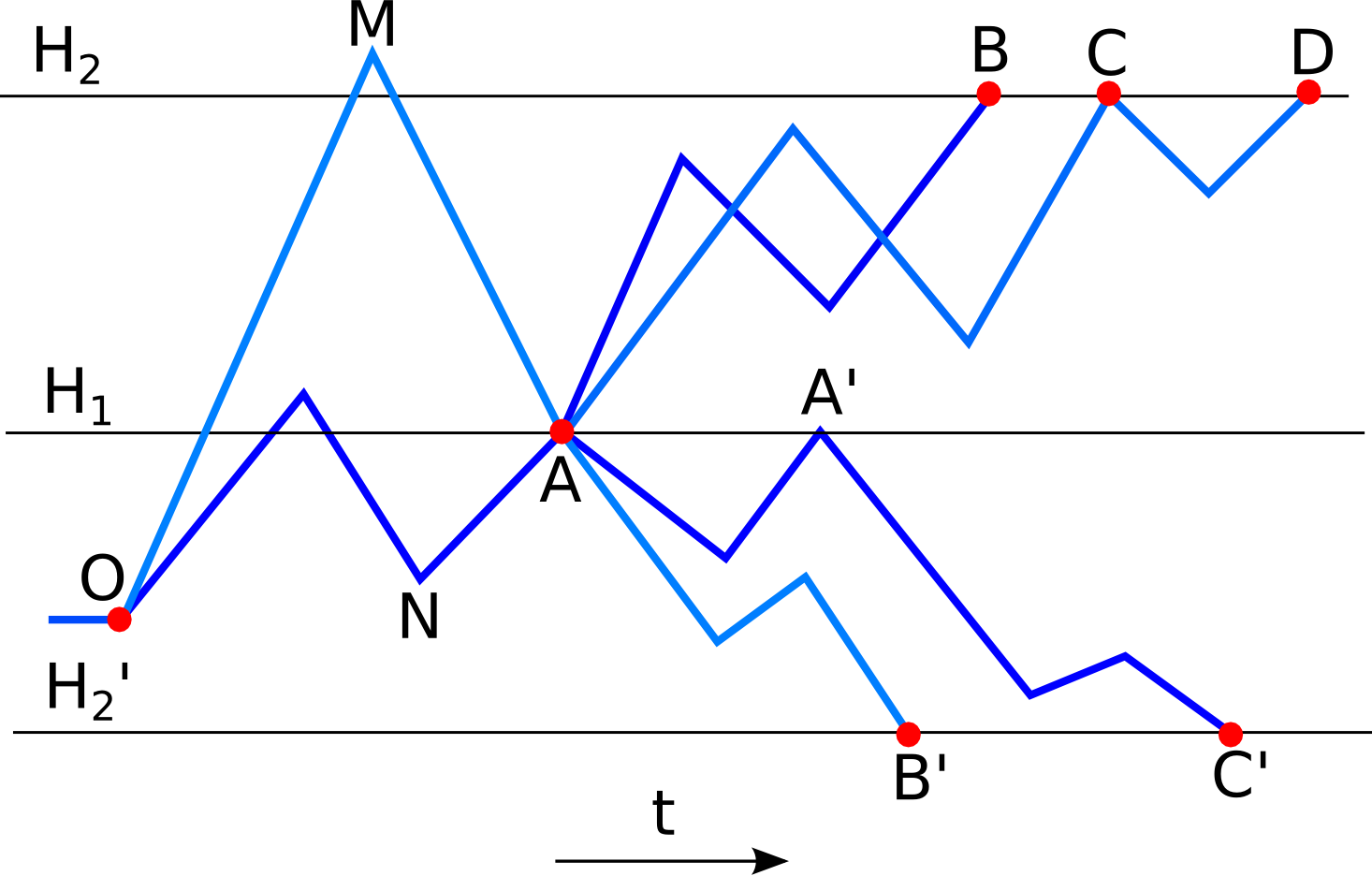}
    \caption{The state at point $O$ is the demagnetized
      state. According to Proposition \ref{prop:RPMI}, the input
      equivalence can be represented as follows: $OMAB\sim OMAC\sim
      OMAD$, $ONAB\sim ONAC\sim ONAD$, and $OMAB'\sim OMAC'$,
      $ONAB'\sim ONAC'$.  If $OMA\sim ONA$, then, due to Lemma
      \ref{lem:h-prolong}, $OMAB\sim OMAC\sim OMAD\sim ONAB\sim
      ONAC\sim ONAD$ and $OMAB'\sim OMAC'\sim ONAB'\sim ONAC'$. The
      states at points $B$, $C$, $D$ are the same for any fixed state
      at point $A$. The same is true for the states at points $B'$,
      $C'$. The states at points $A$, $A'$ are different in a general
      case.}
    \label{fig:rpm}
  \end{center}
\end{figure}

\begin{prop} [{\em Return point memory}] 
  \label{prop:RPMX}
  Let the system is evolved from state $x(t_1)$ under the field
  $H(t')$, $t' \in[t_1, t_2]$, and $H(t_1) = H_1$. Then the state
  $x(t_2)$ depends only on the field $H(t_2) = H_2$, regardless of how
  the field $H(t')$ changed, provided that $H(t')\in [H_1, H_2]$ for
  all $t'\in[t_1,t_2]$.
\end{prop}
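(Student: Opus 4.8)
The plan is to treat Proposition~\ref{prop:RPMX} as the state-space transcription of the return point memory of \cite{Sethna&all1993}, whose operative content I take to be a subloop-closure rule: when the field, moving monotonically, passes through a value equal to an earlier turning point (a local maximum or minimum), the excursion enclosed between the two passages of that value is wiped out and the state is restored to the one held at the earlier passage. Granting this rule, the whole task reduces to showing that an arbitrary field $H(t')$ confined to $[H_1,H_2]$ with $H(t_1)=H_1$ and $H(t_2)=H_2$ can be collapsed to the single monotone rise from $H_1$ to $H_2$, whose terminal state is manifestly a function of $x(t_1)$ and $H_2$ alone.

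First I would apply Proposition~\ref{prop:rate-indep}: by rate independence the restriction of the input to $[t_1,t_2]$ is equivalent to the piecewise-linear input carrying the same alternating sequence of local maxima and minima, so I may work with the finite list $H_1=m_0,M_1,m_1,M_2,\dots,M_k=H_2$ and forget the slopes. The confinement hypothesis places every entry of this list in $[H_1,H_2]$; in particular $H_1$ is the initial (and least) minimum and $H_2$, being both the terminal value and the upper bound, is the final maximum, so the last segment of the reduced input is an ascent terminating at $H_2$.

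Then I would argue by induction on the number of interior extrema $M_1,m_1,\dots,m_{k-1}$. If there are none, the input rises monotonically from $H_1$ to $H_2$, and by rate independence its terminal state depends only on $x(t_1)$ and the endpoint $H_2$. Otherwise, follow the reduced input until a monotone run first reaches a value equal to an earlier turning point --- such a run exists because the terminal ascent climbs to $H_2\ge M_i$ for every interior maximum $M_i$ --- and apply the subloop-closure rule to excise the enclosed excursion. Prolonging the excised and the original inputs by the identical remaining continuation and invoking Lemma~\ref{lem:h-prolong} shows the shortened input is equivalent to the original while reducing the count of interior extrema by two. Iterating exhausts the interior extrema and leaves the monotone rise, which proves that $x(t_2)$ depends only on $x(t_1)$ and $H_2$.

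The step I expect to be the real obstacle is conceptual rather than computational: the return point memory is a characteristic (essentially axiomatic) property and is not a consequence of rate independence, which by Proposition~\ref{prop:rate-indep} collapses a path only as far as its extremal sequence; the additional wiping out of interior extrema is exactly the physical input borrowed from \cite{Sethna&all1993}. The care therefore goes into (i) phrasing the subloop-closure rule precisely enough to be legitimate in the equivalence-class sense used here, (ii) guaranteeing termination by checking that a closeable subloop can always be located while interior extrema remain, and (iii) handling the boundary values $H_1$ and $H_2$ consistently so that the surviving monotone branch is a well-defined function of $x(t_1)$ and $H_2$ only.
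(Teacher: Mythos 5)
The paper contains no proof of Proposition~\ref{prop:RPMX}: it is introduced explicitly as the compact definition of return point memory from \cite{Sethna&all1993}, ``presented \ldots in a slightly changed form as a proposition,'' and Definition~\ref{dfn:RPM} then makes its input-equivalence reformulation (Proposition~\ref{prop:RPMI}) the defining hypothesis on the class of systems considered. Your proposal is therefore not wrong but inverted relative to the paper: you take the elementary subloop-closure rule as the primitive axiom and derive the interval-confinement statement by induction, whereas the paper takes the interval-confinement statement itself as the primitive and recovers elementary closure --- the Madelung deletion --- as a consequence in Section~\ref{sec:space_of_states}, via Proposition~\ref{prop:RPMI} and Lemma~\ref{lem:h-prolong}. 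You correctly flag the key conceptual point (RPM cannot follow from rate independence alone), and your induction is sound as mathematics. What your route buys is a statement the paper never makes: the confinement form is implied by the apparently weaker single-loop closure rule, given Proposition~\ref{prop:rate-indep} and Lemma~\ref{lem:h-prolong}; note that your closure rule is not a literal special case of Proposition~\ref{prop:RPMX}, since closing a loop at a returning value $M$ corresponds to the degenerate bracket $[M,M]$, so the two are genuinely distinct axioms whose equivalence is established by your induction in one direction and the paper's Madelung-deletion argument in the other. What the paper's route buys is brevity and a primitive in exactly the form consumed downstream (Proposition~\ref{prop:demag-equiv}, Lemma~\ref{lem:rmem-seq}). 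If you carry your induction out in full, two details deserve care: the excision step needs the \emph{first} crossing of an earlier turning-point value to enclose an elementary (single down-up) loop, which does hold because any interior extremum inside the enclosed excursion would force a still earlier crossing, contradicting minimality; and the boundary case where a monotone run only reaches, rather than strictly passes, the earlier turning value (e.g.\ $H_2$ equal to an interior maximum) must be admitted as a closure, or the count of interior extrema need not reach zero.
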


Interpreting the states as the equivalence classes and taking into
account Lemma~\ref{lem:h-prolong} and
Proposition~\ref{prop:rate-indep}, we can reformulate the Proposition
\ref{prop:RPMX} in terms of the equivalent inputs.

\begin{prop}[{\em RPM in terms of the input equivalence}]
  \label{prop:RPMI}
  Let input $H^t\in {\cal U}^*$, $H^t(t) = H_1$, be prolonged in two
  different ways by inputs $H_1^{t+s_1}$, $H_2^{t+s_2}$, such that
  $H_1^{t+s_1}(t+s_1) = H_2^{t+s_2}(t+s_2) = H_2$.  If
  $H_1^{t+s_1}(t') \in [H_1, H_2]$ for all $t'\in[t,t+s_1]$ and
  $H_2^{t+s_2}(t')\in [H_1, H_2]$ for all $t'\in [t,t+s_2]$, then
  $H_1^{t+s_1} \sim H_2^{t+s_2}$.
\end{prop}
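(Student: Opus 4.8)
The plan is to reduce Proposition~\ref{prop:RPMI} to the physical statement of return point memory, Proposition~\ref{prop:RPMX}, by exploiting the fact that both prolongations share the same past. First I would observe that $H_1^{t+s_1}$ and $H_2^{t+s_2}$ are both prolongations of the single input $H^t$, so they coincide on $(-\infty,t]$ and therefore determine the same state $x(t)$ at time $t$; moreover, the common final value is $H^t(t)=H_1$, and by continuity each prolongation also starts its evolution segment at $H_1$. This places us exactly in the setting of Proposition~\ref{prop:RPMX} with $t_1=t$, starting state $x(t)$, and $H(t_1)=H_1$. Notice that because the two inputs prolong the \emph{same} $H^t$, the starting states are literally identical, not merely equivalent, so we do not even need the full generality of the starting hypothesis in Proposition~\ref{prop:RPMX}.

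Next I would apply Proposition~\ref{prop:RPMX} separately to each prolongation. The field $H_1^{t+s_1}(t')$ evolves the state $x(t)$ while remaining in $[H_1,H_2]$ and ending at $H_2$; by return point memory the resulting state $x(t+s_1)$ depends only on the final value $H_2$. The same argument applied to $H_2^{t+s_2}(t')$ yields a state $x(t+s_2)$ that again depends only on $H_2$. Since both evolutions begin at the same state $x(t)$ and end at the same field value $H_2$, Proposition~\ref{prop:RPMX} forces $x(t+s_1)=x(t+s_2)$, i.e.\ the two prolonged inputs belong to one and the same state.

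It then remains to convert the equality of states into the input equivalence asserted in the proposition. Here I would invoke the identification of a state with its class of equivalent inputs: equality $x(t+s_1)=x(t+s_2)$ means precisely that $H_1^{t+s_1}$ and $H_2^{t+s_2}$ lie in the same equivalence class, hence $H_1^{t+s_1}\sim H_2^{t+s_2}$. To make this step fully rigorous one checks the defining conditions of $\sim$: the two inputs carry the common final value $H_2$, and after aligning their end times any pair of equal prolongations produces identical outputs because the systems sit in the same state; Lemma~\ref{lem:h-prolong} guarantees that this equivalence is preserved under such equal prolongations, while Proposition~\ref{prop:rate-indep} ensures that only the sequence of values, and not the timing, enters the comparison.

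The main obstacle is not any single calculation but the careful translation between the two languages in which return point memory is phrased: Proposition~\ref{prop:RPMX} speaks of abstract states and their evolution, whereas Proposition~\ref{prop:RPMI} speaks of concrete admissible inputs and the relation $\sim$. The delicate point is justifying that ``the state depends only on $H_2$'' genuinely implies equivalence of the underlying inputs, which rests on the identification of states with equivalence classes together with the well-definedness of the state transition supplied by Lemma~\ref{lem:h-prolong}. One should also verify the degenerate case $H_1=H_2$ (where each prolongation reduces, up to rate independence, to a stationary field leaving the state unchanged) and confirm that the argument is insensitive to whether $H_1<H_2$ or $H_1>H_2$, since $[H_1,H_2]$ is read simply as the closed interval bounded by the two field values.
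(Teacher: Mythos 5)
Your proposal is correct and takes essentially the same route as the paper: the paper offers no formal proof of Proposition~\ref{prop:RPMI}, but obtains it exactly as you do, by interpreting the states in Proposition~\ref{prop:RPMX} as equivalence classes, with Lemma~\ref{lem:h-prolong} supplying the well-definedness of the state transition under equal prolongations and Proposition~\ref{prop:rate-indep} removing the dependence on timing when end times are aligned. Note only that the paper then adopts Proposition~\ref{prop:RPMI} as the \emph{definition} of exhibiting RPM (Definition~\ref{dfn:RPM}), so your careful translation from Proposition~\ref{prop:RPMX} is precisely the intended justification rather than an independent theorem.
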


In Propositions \ref{prop:RPMX}, \ref{prop:RPMI}, $[H_1, H_2]$ denotes
$[min\{H_1,H_2\}, max\{H_1,H_2\}]$.

\begin{dfn}\label{dfn:RPM}
  We say that the system {\em exhibits RPM} if Proposition
  \ref{prop:RPMI} holds true.
\end{dfn}

The return point memory is illustrated by Fig~\ref{fig:rpm}. For the
input $OMACD$ or $ONACD$, the states at points $C$ and $D$ are the
same, and part $CD$ of the input can be omitted, because it does not
influence the final state. Using the terminology of
\cite{Brokate&Sprekels1996ch2}, removing of the redundant part from
the history $H(t)$ will be called the {\em Madelung deletion}.

\section{Reachability of Demagnetized State}\label{sec:demag}

Demagnetized state is the state obtained by applying oscillating field
of amplitude slowly decreasing from a large initial value $H_m$ to
zero \cite{Bertotti1998}, as illustrated by Fig. \ref{fig:demag}.  The
demagnetization must be {\em symmetric}, i.e., all the successive
turning points, such as points $A$ and $B$, must have the same or
approximately the same absolute value of the field $H$.  ``Slowly
decreasing'' means that the amplitudes of adjacent cycles differs by a
small value $\varepsilon$.  For the simplicity, it is assumed that
$\varepsilon$ is constant during the demagnetization. The value of the
output $y$ in the demagnetized state $O'$ is the limiting value at the
end point of the demagnetization process as $\varepsilon \rightarrow
0$.

\begin{figure}[H]
  \begin{center}
    \includegraphics[scale=.5]{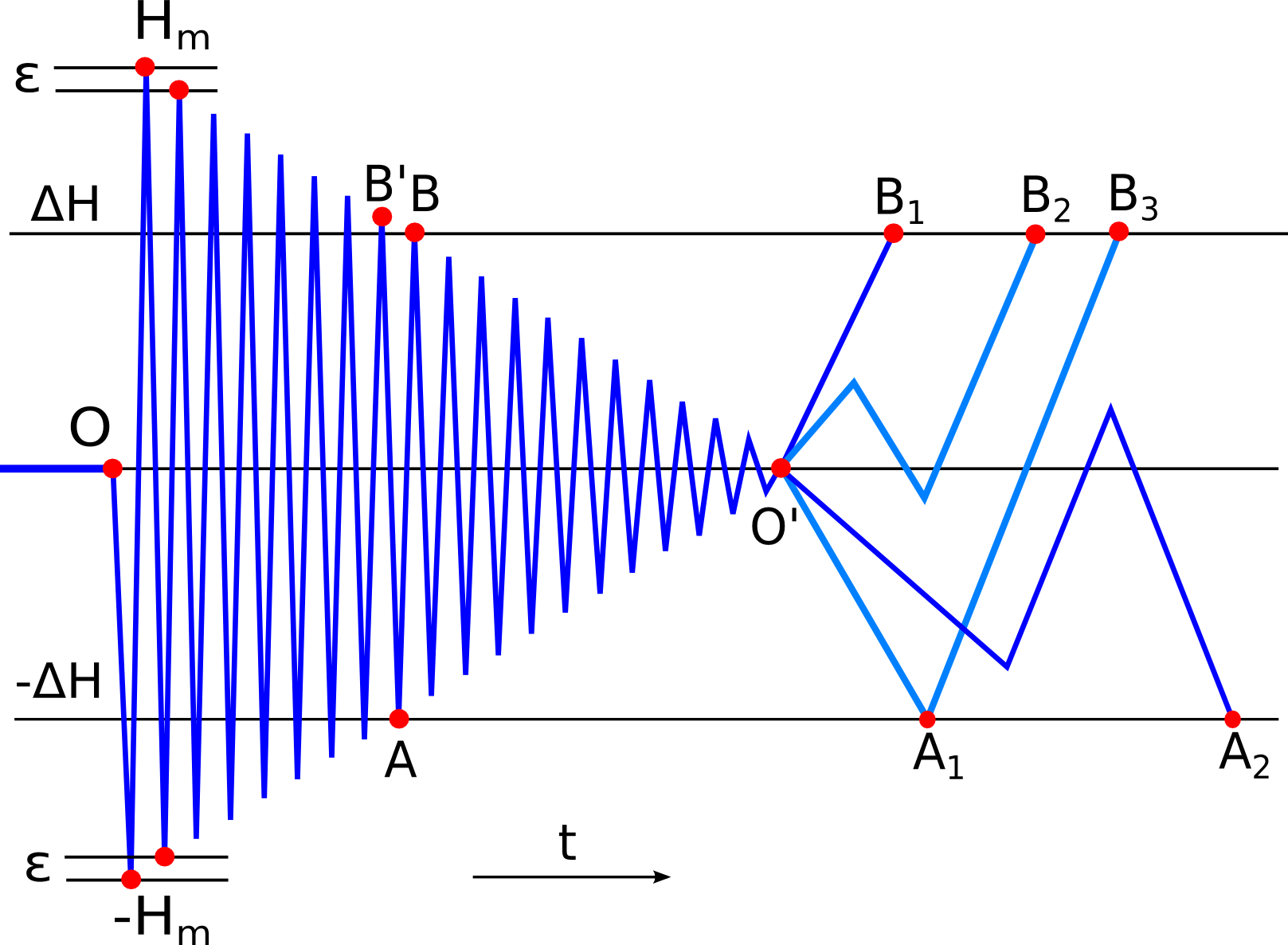}
    \caption{The demagnetized state $O'$ is obtained by the
      demagnetization process $OO'$ as $\varepsilon\rightarrow 0$. 
      Inputs $O'A_1$ and $O'B_1$ correspond to the descending and
      ascending initial magnetization curves respectively. The
      equivalent inputs are: $O'B_1\sim O'B_2\sim O'A_1B_3$ and
      $O'A_1\sim O'A_2$.}
    \label{fig:demag}
  \end{center}
\end{figure}

\begin{dfn}\label{dfn:rdemag}
  Let us say that the {\em demagnetized state is reachable}, if
  the state obtained by the described above demagnetization process,
  as $\varepsilon\rightarrow 0$, is indistinguishable from the initial
  demagnetized state.
\end{dfn}

Applying Proposition \ref{prop:RPMI} to the input $OA$ and
prolongations $OAB$, $OABO'B_1$, $OABO'B_2$, $OABO'A_1B_3$
(Fig. \ref{fig:demag}), it can be seen that $OAB\sim OABO'B_1\sim
OABO'B_2\sim OABO'A_1B_3$, i.e., the states at points $B$, $B_1$,
$B_2$, $B_3$ are the same. In the similar way, examining the input
$OB'$ and prolongations $OB'A$, $OB'AO'A_1$, $OB'AO'A_2$, it can be
found that the states at points $A$, $A_1$, $A_2$ are the same.
Considering the state at point $O'$ as the initial demagnetized state,
we can write $O'B_1\sim O'B_2\sim O'A_1B_3$ and $O'A_1\sim O'A_2$.
Similar results can be obtained for any input that starts from the
point $O'$ and lays inside the region $[-\Delta H, \Delta H]$.
This gives the following proposition.

\begin{prop}\label{prop:demag-equiv}
  Let the system exhibits RPM and has reachable demagnetized state.
  If the system is put into the demagnetized sate at the time $t_0$,
  $H^t\in {\cal U}^*$, and $H^t(t')\in [-\Delta H,\Delta H]$ for all
  $t'\in[t_0, t]$, then all $H^t$, such that $H^t(t) = \Delta H$ are
  equivalent, and all $H^t$, such that $H^t(t) = -\Delta H$, are
  equivalent.
\end{prop}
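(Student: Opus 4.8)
The plan is to show that any two admissible inputs that start from the demagnetized state, stay within $[-\Delta H, \Delta H]$, and end at the same value $\Delta H$ are equivalent; the argument for inputs ending at $-\Delta H$ is symmetric. The key observation is that the demagnetized state $O'$ at $t_0$ is, by reachability, indistinguishable from the limiting state of a symmetric oscillating field with amplitude $\Delta H$ (or at least reachable by a field whose range covers $[-\Delta H, \Delta H]$). This lets me regard $O'$ as a state reachable by an input whose final value can be taken to be either endpoint of the interval, so that every subsequent excursion inside $[-\Delta H, \Delta H]$ can be compared against a common reference.

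First I would fix an arbitrary input $H^t$ with $H^t(t_0)=0$ (the demagnetized state), $H^t(t')\in[-\Delta H,\Delta H]$ throughout, and $H^t(t)=\Delta H$. I would then invoke Proposition~\ref{prop:rate-indep} to replace $H^t$ by an equivalent alternating input $\tilde H^t$ determined solely by its sequence of local maxima and minima, all lying in $[-\Delta H,\Delta H]$. This reduces the problem to comparing two such alternating inputs that share the demagnetized initial state and the terminal value $\Delta H$. Next I would apply the Madelung deletion repeatedly: whenever the alternating sequence contains an internal return that does not reach a new extreme within the interval, Proposition~\ref{prop:RPMI} (RPM in terms of input equivalence) shows the intervening excursion can be deleted without changing the state. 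Since every turning point lies inside $[-\Delta H,\Delta H]$, each such deletion is licensed, and iterating drives any such input toward a canonical monotone rise from the demagnetized state up to $\Delta H$.

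The crux is to show that the two canonical monotone inputs obtained from the two arbitrary inputs are themselves equivalent, which is exactly where reachability of the demagnetized state enters. Concretely, I would mimic the explicit computation already carried out in the text for Fig.~\ref{fig:demag}: applying Proposition~\ref{prop:RPMI} to the input $OA$ and its prolongations gives $O'B_1\sim O'B_2\sim O'A_1B_3$, and symmetrically $O'A_1\sim O'A_2$. The reachability hypothesis guarantees that the state at $O'$ used as the new origin is well defined and independent of the particular demagnetizing sequence, so that the monotone rise to $\Delta H$ starting from $O'$ is a single equivalence class regardless of whether one first dipped toward $-\Delta H$. Combining the Madelung reductions with this identification yields that all inputs ending at $\Delta H$ collapse to one class, and likewise for $-\Delta H$.

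I expect the main obstacle to be rigorously justifying the passage to the limit $\varepsilon\to 0$ that defines $O'$, and ensuring that the equivalence relations survive this limit: Proposition~\ref{prop:RPMI} is stated for finitely many segments, whereas the demagnetization process involves arbitrarily many oscillations as $\varepsilon\to 0$. I would handle this by using Definition~\ref{dfn:rdemag} directly---reachability is \emph{defined} so that the limiting demagnetized state is indistinguishable from the initial one---so that I may treat $O'$ as an exact demagnetized state and apply the finite-segment propositions to prolongations \emph{after} $O'$ without ever taking a limit inside an equivalence argument. The remaining steps are then routine applications of Madelung deletion and Lemma~\ref{lem:h-prolong}.
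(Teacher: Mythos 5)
Your middle step fails as stated: Madelung deletion cannot reduce every in-band input to a canonical monotone rise. Proposition~\ref{prop:RPMI} only licenses removing an excursion that stays inside the interval $[H_1,H_2]$ spanned by the field at the start of the excursion and its end value; starting from the demagnetized state at $H=0$, an input that first dips to $-a$ (with $0<a\leq\Delta H$) and then rises to $\Delta H$ leaves the interval $[0,\Delta H]$, so the dip can never be deleted. After all admissible deletions you are therefore left with a one-parameter family of irreducible forms $0\rightarrow -a\rightarrow \Delta H$, and RPM together with rate independence provides no equivalence between them --- identifying them all is precisely the content of Proposition~\ref{prop:demag-equiv} and is exactly what requires reachability. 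Your own text betrays the problem: if deletion really produced a single monotone canonical form, your ``crux'' step would be vacuous, since the ``two canonical monotone inputs'' would already be identical and reachability would never be needed. Note also that folding the direct-rise case ($a=0$) into this family via a reduced sequence $(-\Delta H,\Delta H)$ rests on Corollary~\ref{cor:demag-equiv}, which is a corollary \emph{of} the proposition being proved, so any canonicalization that presupposes it is circular.

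The repair is to discard the canonicalization stage altogether, which is what the paper does. For fixed $\varepsilon>0$ the demagnetization $OABO'$ is a finite piecewise-linear element of ${\cal U}^*$, and every continuation from $O'$ that stays in $[-\Delta H,\Delta H]$ and ends at $\Delta H$ makes the full input a prolongation of the single common prefix $OA$, whose turning point lies at $-\Delta H$; one application of Proposition~\ref{prop:RPMI} with $[H_1,H_2]=[-\Delta H,\Delta H]$ then identifies \emph{all} such prolongations with $OAB$ at once --- the computation around Fig.~\ref{fig:demag} that you cite already covers arbitrary in-band continuations, not just the particular paths $B_1$, $B_2$, $B_3$ --- and the symmetric argument with base $OB'$ handles the endpoint $-\Delta H$. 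Reachability (Definition~\ref{dfn:rdemag}) is then invoked once, exactly as in your final paragraph, to replace $O'$ by the initial demagnetized state; your handling of the $\varepsilon\rightarrow 0$ subtlety, keeping the limit out of the finite-segment equivalence arguments, matches the paper's. In short, the key idea in your third paragraph \emph{is} the paper's proof, while the reduction machinery of your second paragraph is both incorrect as stated and unnecessary once that idea is applied in full generality.
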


\begin{cor}\label{cor:demag-equiv}
  The state obtained from the demagnetized state after increasing
  (decreasing) the field by $\Delta H$ can also be reached from the
  demagnetized state via decreasing (increasing) the field by $\Delta
  H$ and then increasing (decreasing) it by $2\Delta H$.
\end{cor}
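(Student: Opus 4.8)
The plan is to observe that this corollary is an immediate application of Proposition~\ref{prop:demag-equiv}. I would recast both the direct and the indirect magnetization paths as admissible inputs that start from the demagnetized state at $t_0$, remain inside $[-\Delta H, \Delta H]$, and terminate at the same final field value. The proposition then asserts that any two such inputs are equivalent, which is precisely the claim that they reach the same state, given the convention fixed earlier that a state is an equivalence class of inputs.

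Concretely, for the ``increasing'' case I would let $H_1^t\in{\cal U}^*$ be the input that rises monotonically from $0$ to $\Delta H$, and let $H_2^{t'}\in{\cal U}^*$ be the input that first falls from $0$ to $-\Delta H$ and then rises from $-\Delta H$ to $\Delta H$. Both begin in the demagnetized state and satisfy $H_1^t(t) = H_2^{t'}(t') = \Delta H$. The only point needing verification is containment in the admissible window: the direct path takes values in $[0,\Delta H]\subseteq[-\Delta H,\Delta H]$, while the indirect path, being a descent to $-\Delta H$ followed by an ascent to $\Delta H$, takes values in $[-\Delta H,\Delta H]$ throughout. Hence both inputs satisfy the hypotheses of Proposition~\ref{prop:demag-equiv} with final value $\Delta H$, and are therefore equivalent; this is the equivalence $O'B_1\sim O'A_1B_3$ already read off from Fig.~\ref{fig:demag}.

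The ``decreasing'' case is the mirror image, interchanging the roles of $+\Delta H$ and $-\Delta H$: the direct descent to $-\Delta H$ and the indirect path (ascent to $\Delta H$, then descent by $2\Delta H$ to $-\Delta H$) both start demagnetized, stay in $[-\Delta H,\Delta H]$, and end at $-\Delta H$, so Proposition~\ref{prop:demag-equiv} again yields equivalence.

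There is essentially no obstacle here beyond bookkeeping. The containment in $[-\Delta H,\Delta H]$ is immediate from the monotone structure of each segment, and once that is checked the conclusion is handed over verbatim by the proposition. The only care I would take is to state explicitly that ``reaching the same state'' means exactly the input equivalence guaranteed by the proposition, so that the corollary is read in the language of equivalence classes used throughout.
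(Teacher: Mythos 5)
Your proof is correct and matches the paper's intended argument: the paper states this corollary without proof as an immediate consequence of Proposition~\ref{prop:demag-equiv}, exactly the application you spell out (and which the paper illustrates via $O'B_1\sim O'A_1B_3$ in Fig.~\ref{fig:demag}). Your verification that both paths stay within $[-\Delta H,\Delta H]$ and share the same final field value is precisely the bookkeeping the paper leaves implicit.
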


Starting from the demagnetized state $O$, we can return back to this
state, e.g., decreasing the field by the value $\Delta H$, not
necessary large, and then performing the demagnetization process $AO'$
(Figure \ref{fig:pdemag}). In this way we merely continue the previous
demagnetization performed with a large initial amplitude.

If the system has the reachable demagnetized state according to
Definition \ref{dfn:rdemag}, then, for sufficiently small
$\varepsilon$, the states $O$ and $O'$ can be considered as identical.
Thus, the output $y$ at the end of any input applied to the state
$O'$, as $\varepsilon\rightarrow 0$, must tend to the value $y$ at the
end of the same input applied directly to the state $O$. As an
example, the value $y$ at the end point of input $OAO'B'CD$ must tend
to the value $y$ at end point of input $OB'CD$.

Consider the input $OABO'B'$. The states at points $B$, $B'$, as it
was shown before, are the same. Because of this, the part $BO'B'$ can
be omitted, which means that $OABCD\sim OAO'B'CD$, i.e., the input
with ``full'' demagnetization $AO'$ can be replaced by the input with
``partial'' demagnetization $AB$.  Thus, the output $y$ at the end of
the input $OABCD$ must tend to the output at the end of the input
$OB'CD$, as $\varepsilon\rightarrow 0$. The condition of this kind
allows to impose on the read-out functions the restriction that
expresses the reachability of demagnetized state (see
Section~\ref{sec:read-out} later on).

\begin{figure}[H]
  \begin{center}
    \includegraphics[scale=.6]{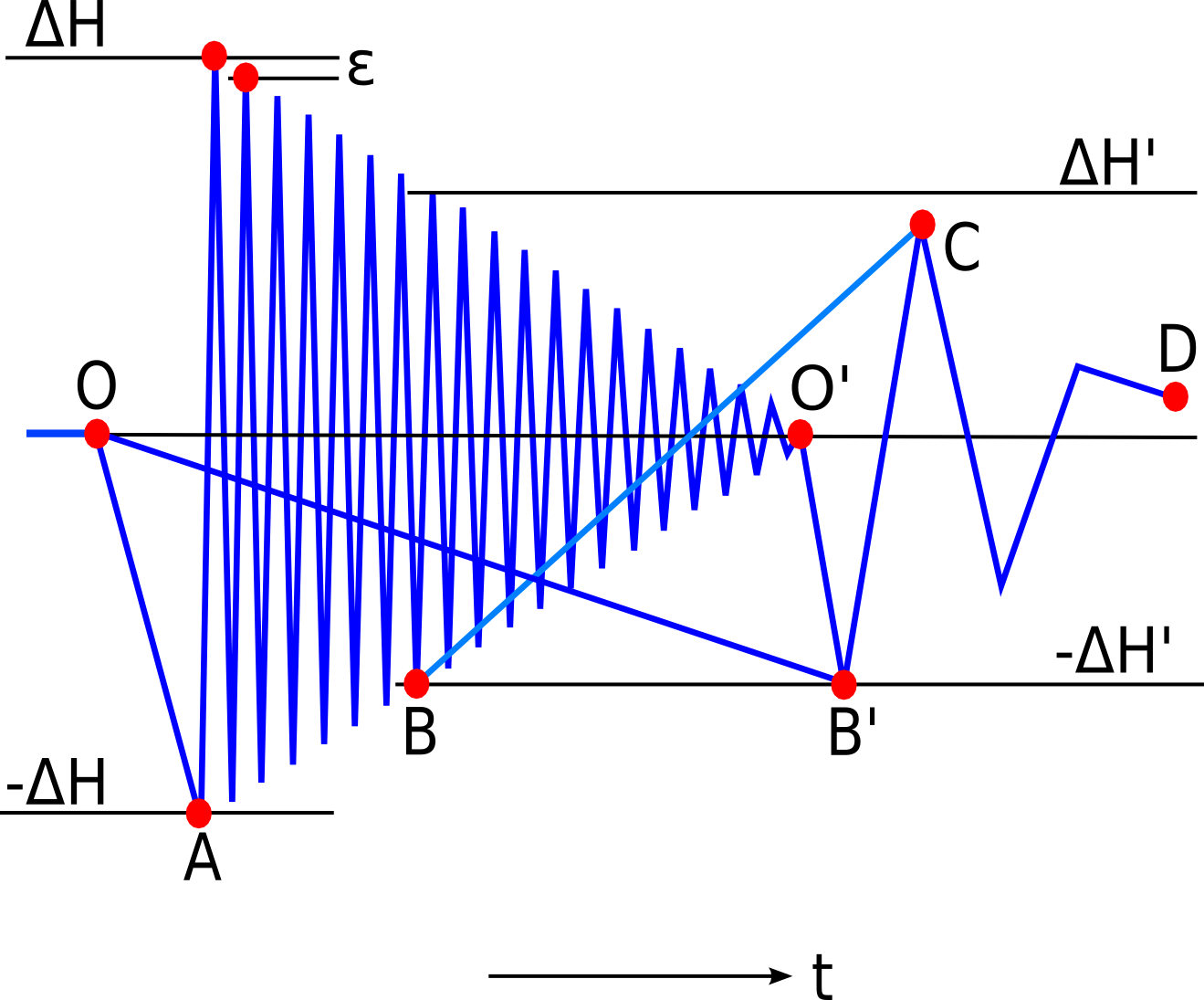}
    \caption{ Inputs $OAO'B'CD$ and $OABCD$ are equivalent. As
      $\varepsilon \rightarrow 0$, the output value $y$ at the end of
      input $OABCD$ must tend to the value $y$ at the end of input
      $OB'CD$.}
    \label{fig:pdemag}
  \end{center}
\end{figure}

\section{The Space of States and the State Transition Rules}
\label{sec:space_of_states}

According to Proposition \ref{prop:rate-indep}, the inputs
$\tilde{H}^t\in{\cal U}^*$ that have the same sequence of local maxima
and minima $\tilde{H}_0\tilde{H}_1\,\ldots\,\tilde{H}_k$ are
equivalent.  We can reduce the number of elements in the sequence by
replacing successively the inputs with equivalent ones as follows.
Let $\tilde{H}_{i_0}$ be the element in the sequence that has the
maximum absolute value, or, if there are more than one such element,
the last one. Assume at first that $\tilde{H}_{i_0}<0$ and let
$H_0=\tilde{H}_{i_0}$. As follows from
Proposition~\ref{prop:demag-equiv} and Lemma ~\ref{lem:h-prolong},
$\tilde{H}_0\tilde{H}_1\,\ldots\,\tilde{H}_k \sim
H_0\tilde{H}_{i_0+1}\tilde{H}_{i_0+2}\,\ldots\,\tilde{H}_k$. Let
$\tilde{H}_{i_1}=H_1$ be the last maximal element in the sequence
$\tilde{H}_{i+1}\tilde{H}_{i+2}\,\ldots\,\tilde{H}_k$. According to
Proposition~\ref{prop:RPMI} and Lemma~\ref{lem:h-prolong},
$\tilde{H}_0\tilde{H}_1\,\ldots\,\tilde{H}_k\sim
H_0H_1\tilde{H}_{i_1+1}\tilde{H}_{i_1+2}\,\ldots\,\tilde{H}_k$. In the
similar way we have $\tilde{H}_0\tilde{H}_1\,\ldots\,\tilde{H}_k\sim
H_0H_1H_2\tilde{H}_{i_2+1}\tilde{H}_{i_2+2}\,\ldots\,\tilde{H}_k$,
where $\tilde{H}_{i_2} = H_2$ is the last minimal element in the
sequence $\tilde{H}_{i_1+1}\tilde{H}_{i_1+2}\,\ldots\,\tilde{H}_k$.
Selecting maximum and minimum elements alternatively, we end up with
the {\em reduced memory sequence} \cite{Visintin2006}
$H_0H_1H_2\,\ldots\,H_n$ that is equivalent to the initial input
$\tilde{H}_0\tilde{H}_1\,\ldots\,\tilde{H}_k$.

In the case $\tilde{H}_{i_0}>0$, let $H_0 = -\tilde{H}_{i_0}$ and
$H_1=\tilde{H}_{i_0}$. Taking into account Corollary
\ref{cor:demag-equiv}, we have
$\tilde{H}_0\tilde{H}_1\,\ldots\,\tilde{H}_k\sim
H_0H_1\tilde{H}_{i_0+1}\tilde{H}_{i_0+2}\,\ldots\,\tilde{H}_k$. Now
the remaining elements in the sequence can be defined as before,
getting the reduced memory sequence
$\tilde{H}_0\tilde{H}_1\,\ldots\,\tilde{H}_k \sim H_0H_1\,\ldots\,H_n$
with $H_0<0$. In this way, we excluded the sequences with different
signs of $H_0$.

The last case, $\tilde{H}_{i_0} = 0$, corresponds to the trivial
input, such that $\tilde{H}^t(t') = 0$ for all $t'\in(-\infty, t]$,
and to the reduced memory sequence with $n = 0$ and $H_0 = 0$.

The results of the above consideration can be represented as the
following lemma:
\begin{lem}\label{lem:rmem-seq}
  For systems that exhibit RPM and have reachable demagnetized state,
  any input $\tilde{H}^t\in{\cal U}^*$ is equivalent to the input
  $H^t\in{\cal U}^*$ determined by the reduced memory sequence
  $H_0,H_1,\,\ldots\,,H_n$, with $n=0,1,\ldots$, such that
  \begin{equation}\label{Hi-ineq}
    -H_{max}\leq H_0\leq 0, \quad H_1 \leq |H_0|,\quad H_0 < H_2 < H_4 \ldots\,,
    \quad H_1 > H_3 > H_5 \ldots\,,
    \quad H = H_n,
  \end{equation}
  where $H_1,H_3,H_5,\,\ldots$ are local maxima and $H_0,H_2,H_4,\,\ldots$ 
  are local minima of the input $H^t$.
\end{lem}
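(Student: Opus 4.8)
The plan is to formalize the reduction procedure already sketched in the paragraphs preceding the lemma, turning it into a clean induction. First I would invoke Proposition~\ref{prop:rate-indep} to replace the arbitrary input $\tilde{H}^t$ by its sequence of alternating local maxima and minima $\tilde{H}_0\tilde{H}_1\ldots\tilde{H}_k$, so that from the outset I am working with a finite monotone-alternating sequence rather than a general piecewise-linear function. The goal is then to show that this sequence can be reduced, via a chain of equivalences, to one satisfying the inequalities~(\ref{Hi-ineq}).

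Let me walk through my thinking here. The paper wants to show every input reduces to a canonical "reduced memory sequence." The preceding paragraphs do the work via cases on the sign of the largest-magnitude element. I should mirror that.

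The core of the argument is the Madelung-deletion step, and I would package it as a lemma or sub-claim I apply repeatedly: if $\tilde{H}_{i_0}$ is the last element of maximum absolute value, then by Proposition~\ref{prop:demag-equiv} together with Lemma~\ref{lem:h-prolong} (when $\tilde{H}_{i_0}<0$) one may delete everything before $\tilde{H}_{i_0}$ and set $H_0=\tilde{H}_{i_0}$; when $\tilde{H}_{i_0}>0$ one instead uses Corollary~\ref{cor:demag-equiv} to prepend a matching minimum $H_0=-\tilde{H}_{i_0}$ before $H_1=\tilde{H}_{i_0}$, thereby forcing $H_0<0$ and absorbing both cases into a single normal form. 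The degenerate case $\tilde{H}_{i_0}=0$ gives the trivial sequence $n=0$, $H_0=0$. This initial step is what establishes the first two inequalities, $-H_{max}\le H_0\le 0$ and $H_1\le|H_0|$; the bound $H_0\ge -H_{max}$ comes directly from the admissibility condition $|H|\le H_{max}$ in ${\cal U}^*$, and $H_1\le|H_0|$ holds because $\tilde{H}_{i_0}$ was chosen to have maximum absolute value.

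Let me check the inequality directions. After extraction, the remaining elements alternate. The claim is $H_0<H_2<H_4<\cdots$ (minima increasing) and $H_1>H_3>H_5>\cdots$ (maxima decreasing). This is exactly the nested/staircase structure RPM enforces: each successive turning point stays inside the interval spanned by the previous ones, so successive minima march up and successive maxima march down. The repeated application of Proposition~\ref{prop:RPMI} is what guarantees this — each time I pick the last extremal element in the tail, any intervening oscillation that stayed within the current interval gets wiped out.

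Next I would set up the induction proper. Having fixed $H_0$ (and possibly $H_1$), I apply the same selection to the remaining tail: choose $H_1$ (resp.\ $H_2$) as the last maximal (resp.\ minimal) element of the current suffix, and appeal to Proposition~\ref{prop:RPMI} with Lemma~\ref{lem:h-prolong} to delete the wiped-out oscillations lying inside the interval $[H_0,H_1]$. Iterating alternately between maxima and minima produces the sequence $H_0H_1H_2\ldots H_n$, and the monotonicity chains $H_0<H_2<H_4<\cdots$ and $H_1>H_3>H_5>\cdots$ fall out automatically from the ``last extremal element'' selection rule: each newly chosen extremum is strictly interior to the interval bounded by the two preceding extrema, since any element reaching the previous bound would itself have been selected earlier. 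The terminal condition $H=H_n$ is just the requirement that the reduction preserves the final field value, which every equivalence step does by construction (all the Madelung deletions touch only interior portions of the history and leave the endpoint untouched). The process halts after finitely many steps because $k$ is finite and each reduction strictly decreases the number of retained turning points.

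The main obstacle I anticipate is not any single estimate but the bookkeeping needed to justify that the ``last extremal element'' rule yields \emph{strict} inequalities and a genuinely alternating min/max pattern — in particular, verifying that after extracting $H_0$ the very next selected maximum $H_1$ satisfies $H_1\le|H_0|$ with the right tie-breaking, and that no degenerate repetitions (equal consecutive extrema, or an extremum equal to a previous bound) survive. I would handle this by arguing that whenever a candidate extremum equals or exceeds the relevant previous bound, Proposition~\ref{prop:RPMI} lets me delete it together with the excursion it caps, so the surviving sequence is strictly monotone in each of the two interleaved subsequences. Making this "ties get wiped out" reasoning airtight — rather than merely plausible from the figures — is the part that requires genuine care.
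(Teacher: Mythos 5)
Your proposal is correct and follows essentially the same route as the paper, which proves Lemma~\ref{lem:rmem-seq} precisely by the preceding text's reduction: Proposition~\ref{prop:rate-indep} to pass to the alternating extremal sequence, the sign split on the last maximal-modulus element $\tilde{H}_{i_0}$ handled via Proposition~\ref{prop:demag-equiv}, Corollary~\ref{cor:demag-equiv}, and the trivial case, followed by alternating ``last extremal element'' selections justified by Proposition~\ref{prop:RPMI} with Lemma~\ref{lem:h-prolong}. Your added care about strictness of the interleaved monotonicity chains and the tie-breaking is a sound formalization of what the paper leaves implicit.
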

Because the input equivalence is transitive, all the inputs that are
equivalent to the input $H_0H_1\,\ldots\,H_n$ are equivalent to each
other. Thus, the variables $H_0,H_1,\,\ldots\,,H_n$ determine the
class of equivalent inputs, i.e., the state of the system. This means
that the reduced memory sequences $H_0,H_1,\,\ldots\,,H_n$ with
$H_0\leq0$ and $n = 0,1,\,\ldots\,$, represent coordinates in the
state-space of a system that exhibits RPM and has the reachable
demagnetized state according to Definitions~\ref{dfn:RPM},
\ref{dfn:rdemag}.

As usual, different coordinate systems can be used for parametrization
of the state-space. Consider another coordinates introduced below,
which seem to be more convenient to represent the read-out functions.
Let us define
\begin{equation}\label{DH-def}
\Delta H_0= |H_0|,\;\Delta H_1= |H_1-H_0|,\;\ldots\;,\;
\Delta H_n=|H_n-H_{n-1}|.
\end{equation}

\begin{lem}\label{lem:r-input}
  Any input $\tilde{H}^t\in{\cal U}^*$ is equivalent to the input
  $H^t\in{\cal U}^*$ such that, starting from the demagnetized state
  at $H = 0$, the field linearly decreases by the value $\Delta
  H_0$, then increases by $\Delta H_1$ then decreases by $\Delta H_2$,
  and so on till $\Delta H_n$, where $n = 0,\,1,\,2\,\ldots$ and
\begin{equation}\label{DH-ineq}
  2H_{max}\geq 2\Delta H_0 \geq \Delta H_1 > \Delta H_2 > \ldots > \Delta H_n > 0.
\end{equation}
\end{lem}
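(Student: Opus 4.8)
The plan is to obtain this lemma directly from Lemma~\ref{lem:rmem-seq} by the change of coordinates (\ref{DH-def}). By Lemma~\ref{lem:rmem-seq}, any input $\tilde{H}^t\in{\cal U}^*$ is equivalent to the input $H^t$ determined by a reduced memory sequence $H_0,H_1,\ldots,H_n$ satisfying (\ref{Hi-ineq}). First I would verify that this $H^t$, starting from the demagnetized state at $H=0$ and passing through the turning points $H_0,H_1,\ldots,H_n$, has exactly the monotone-segment structure claimed. Since $H_0\leq 0$, the first segment decreases from $0$ to $H_0$, of magnitude $\Delta H_0=|H_0|$. The odd-indexed $H_i$ are local maxima and the even-indexed $H_i$ are local minima, so consecutive segments alternate: the field increases from $H_{i-1}$ to $H_i$ when $i$ is odd and decreases when $i$ is even, and in either case the step has magnitude $\Delta H_i=|H_i-H_{i-1}|$. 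By rate independence (Proposition~\ref{prop:rate-indep}) this input is equivalent to $H^t$, hence to $\tilde{H}^t$, so it only remains to recast the constraints (\ref{Hi-ineq}) as (\ref{DH-ineq}).

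For the endpoint inequalities I would use $H_0\leq 0$ to write $\Delta H_0=-H_0$; then $H_0\geq -H_{max}$ gives $\Delta H_0\leq H_{max}$, i.e.\ $2H_{max}\geq 2\Delta H_0$. Because $H_1$ is the local maximum following the minimum $H_0$ one has $H_1>H_0$, so $\Delta H_1=H_1-H_0$, and the constraint $H_1\leq |H_0|=-H_0$ yields $\Delta H_1\leq -2H_0=2\Delta H_0$, which is $2\Delta H_0\geq \Delta H_1$.

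To establish the strict chain $\Delta H_1>\Delta H_2>\cdots>\Delta H_n$, I would compute consecutive differences by splitting on the parity of the index. For $i$ odd, $\Delta H_i-\Delta H_{i+1}=(H_i-H_{i-1})-(H_i-H_{i+1})=H_{i+1}-H_{i-1}>0$, since the even-indexed local minima increase, $H_0<H_2<H_4<\cdots$. For $i$ even, $\Delta H_i-\Delta H_{i+1}=(H_{i-1}-H_i)-(H_{i+1}-H_i)=H_{i-1}-H_{i+1}>0$, since the odd-indexed local maxima decrease, $H_1>H_3>H_5>\cdots$. Finally $\Delta H_n>0$ because consecutive elements of a reduced memory sequence, being alternating maxima and minima, are distinct. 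This bookkeeping is the main content of the argument rather than a genuine obstacle; the only subtlety is pairing each parity case with the correct nesting condition, which the two computations above handle, so that each of the two monotonicity hypotheses in (\ref{Hi-ineq}) is consumed exactly once.
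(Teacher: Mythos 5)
Your proof is correct and follows exactly the route the paper intends: the paper's own proof of Lemma~\ref{lem:r-input} is omitted as routine, being just the translation of Lemma~\ref{lem:rmem-seq} through the definitions (\ref{DH-def}), and your parity bookkeeping on consecutive differences supplies precisely that translation, with each inequality in (\ref{Hi-ineq}) correctly consumed. The only loose end is the degenerate trivial input ($n=0$, $H_0=0$), where $\Delta H_0=0$ and the strict bound $\Delta H_n>0$ fails as written --- but that looseness sits in the paper's statement itself, which is in any case later relaxed to the non-strict inequalities (\ref{xi-ineq}).
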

\begin{proof} Omitted. \end{proof}

Any state reachable from the demagnetized state, can be obtained by
the input $\Delta H_0\Delta H_1\,\ldots\,\Delta H_n$ according to
Lemma \ref{lem:r-input}.  To shorten the notations, let us
introduce the variables $\xi_0,\xi_1\,\ldots\,,\xi_n$,
\begin{equation}\label{xi-def}
  \xi_0 = 2\Delta H_0,\, \xi_1 = \Delta H_1,\, \xi_2 = 
  \Delta H_2,\, \ldots,\, \xi_n = \Delta H_n.
\end{equation}
It is convenient to accept instead of (\ref{DH-ineq}) less strict
inequalities
\begin{equation}\label{xi-ineq}
  \xi_M \geq \xi_0 \geq \xi_1 \geq \xi_2 \geq \ldots \geq \xi_n \geq 0,
\quad \mbox{where}\quad \xi_M/2 = H_{max}.
\end{equation}
Taking into account that the signs of $H_i - H_{i-1}$ are alternating
in (\ref{DH-def}), the reduced memory sequence
$H_0,H_1,\,\ldots\,,H_n$ can be expressed via
$\xi_0,\xi_1\,\ldots\,,\xi_n$ as follows:
\begin{equation}\label{xi->rmem}
  H_k = -\frac{1}{2}\xi_0 + \xi_1 - \ldots \pm\xi_k,
  \quad\mbox{where}\quad k = 0,1,2,\ldots\,\,n,
\end{equation}
and for the field $H$ we have
\begin{equation}\label{H(xi)}
  H = -\frac{1}{2}\xi_0 + \xi_1 - \ldots \pm\xi_n.
\end{equation}

The equations (\ref{xi->rmem}) determine a linear reversible transformation between
variables $\xi_0,\ldots,\xi_n$ and $H_0,\ldots,H_n$, which means that
$\xi_0, \ldots, \xi_n$ are equally acceptable as coordinates as the
reduced memory sequences $H_0,\ldots,H_n$.

\medskip

In the coordinates $\xi_0,\xi_1\, \ldots\,,\xi_n$, the
demagnetized state is $(0)$. The hysteresis branch that corresponds to
$\xi_n$ is ascending for odd $n$ and descending for even $n$.
The state $(\xi_0,\,\ldots\,,\xi_{n-1}, 0)$ is obviously the same as
the state $(\xi_0,\,,\ldots\,,\xi_{n-1})$.  If $\xi_i = \xi_{i+1}$ and
$1\leq i\leq n-1$, the state $(\xi_0,\,\ldots\,,\xi_i,\xi_{i+1},\,
\ldots\,,\xi_n)$ is the same as the state $(\xi_0,\,\ldots,\,
\xi_{i-1},\xi_{i+2},\,\ldots\,\xi_n)$ according to Proposition
\ref{prop:RPMI} and Lemma~\ref{lem:h-prolong}.  Excluding equal
adjacent coordinates corresponds to the {\em Madelung deletion}
mentioned in the previous section.

Let us examine how the state variables evolve when the input changes
(cf. Propositions \ref{prop:state-trans}, \ref{prop:semi-group}).
Let $H$ increases or decreases by the small value $\delta H$, not
violating the inequalities (\ref{xi-ineq}). 

From the initial demagnetized state $(0)$ we can go along the
descending or ascending magnetization curves, and the new state will
be $(2|\delta H|)$ if $\delta H < 0$ or $(2\delta H, 2\delta H)$ if
$\delta H > 0$.

The state $(\xi_0, \xi_0)$ is the state on the ascending magnetization
curve with $H = \xi_0/2$. If $\delta H > 0$, the new state is on the
same curve with $H = \xi_0/2 + \delta H$, which is the state $(\xi_0 +
2\delta H, \xi_0 + 2\delta H)$.

For other states $(\xi_0, \ldots, \xi_n)$ with odd $n$, if $\delta H <
0$, the variable $\xi_{n+1} = |\delta H|$ is added, because $H(t)$
starts to decrease after increasing.  Otherwise, if $\delta H > 0$,
$H(t)$ continues increasing, and the new state is $(\xi_0, \ldots,
\xi_n + \delta H)$.  The similar is true for even $n$, with opposite
signs of $\delta H$.

Due to the Madelung deletion that must be performed if $\xi_n$ becomes
equal to $\xi_{n-1}$ for $n \geq 2$, the inequalities (\ref{DH-ineq})
remain true.

The following proposition summarizes the above results.
\begin{prop}
  The sequences of variables $\xi_0,\xi_1\,\ldots\,,\xi_n$, $n =
  0,1,\ldots$, defined according (\ref{xi-def}), (\ref{xi-ineq}), can
  be accepted as coordinates in the state-space of a system that
  exhibits RPM and has reachable demagnetized state. When $H$ changes,
  the coordinates of the the state change according to
  Table~\ref{tab:state-trans}.
\end{prop}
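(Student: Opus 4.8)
The plan is to verify the two assertions in turn: first that the tuples $(\xi_0,\ldots,\xi_n)$ genuinely coordinatize the state space, and second that the response of these tuples to a small field increment $\delta H$ is exactly the one tabulated. For the first assertion I would not reprove anything about equivalence classes. Lemma~\ref{lem:rmem-seq} already shows that every admissible input is equivalent to one coming from a reduced memory sequence $H_0,\ldots,H_n$ obeying (\ref{Hi-ineq}), and that such a sequence determines the state. The linear map (\ref{xi->rmem}) sends $(\xi_0,\ldots,\xi_n)$ to $(H_0,\ldots,H_n)$; it is triangular with nonzero diagonal, hence invertible, so the $\xi$-tuples are in bijection with the reduced memory sequences and label exactly the same states. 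The only point to check is that the coordinate domains correspond: substituting (\ref{xi->rmem}) into (\ref{Hi-ineq}) reproduces the strict chain (\ref{DH-ineq}), and the relaxed ordering (\ref{xi-ineq}) used here differs from it only by permitting equalities, which are precisely the configurations removed by the Madelung deletion $(\ldots,\xi_i,\xi_{i+1},\ldots)=(\ldots,\xi_{i-1},\xi_{i+2},\ldots)$ for $\xi_i=\xi_{i+1}$ recorded just above. Hence the $\xi$-tuples are coordinates.

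For the transition rules I would argue by cases on the parity of $n$ and the sign of $\delta H$, each case being settled by results already in hand. By (\ref{H(xi)}) the last segment is ascending for odd $n$ and descending for even $n$, so there is a distinguished ``continuing'' sign of $\delta H$ and an opposite ``reversing'' sign. When $\delta H$ continues the current direction, Proposition~\ref{prop:rate-indep} and Lemma~\ref{lem:h-prolong} let the new linear piece merge into the last one; since $\xi_n=\Delta H_n$ for $n\ge1$ this is the update $\xi_n\mapsto\xi_n+|\delta H|$, whereas for $n=0$ the normalization $\xi_0=2\Delta H_0$ of (\ref{xi-def}) forces the doubled update $\xi_0\mapsto\xi_0+2|\delta H|$. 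When $\delta H$ reverses the direction, Proposition~\ref{prop:RPMI} guarantees that a small reversal records a new local extremum without wiping any earlier one, so a coordinate $\xi_{n+1}=|\delta H|$ is appended, and for $\delta H$ small the new tuple still obeys $\xi_n\ge\xi_{n+1}>0$ in (\ref{xi-ineq}).

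The nontrivial entries are the two states adjacent to the demagnetized point, where the generic rules must be replaced using Proposition~\ref{prop:demag-equiv} and Corollary~\ref{cor:demag-equiv}. At the demagnetized state $(0)$ one cannot append, because $\xi_0=0$ would violate $\xi_0\ge\xi_1$; instead an increase by $\delta H$ runs along the initial magnetization curve, and Corollary~\ref{cor:demag-equiv} identifies this with first decreasing by $\delta H$ and then increasing by $2\delta H$, giving $(2\delta H,2\delta H)$, while a decrease gives $(2|\delta H|)$ as above. At the tip $(\xi_0,\xi_0)$ of the initial curve (where $H=\xi_0/2$) the same corollary shows that a further increase stays on that curve and must move $\xi_0$ and $\xi_1$ together, producing $(\xi_0+2\delta H,\xi_0+2\delta H)$ rather than a spurious appended coordinate; this is also why the Madelung deletion is stated only for inner indices $1\le i\le n-1$ and does not act on the pair $\xi_0=\xi_1$, which carries the initial-curve information. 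Finally, whenever a continuing step drives $\xi_n$ up to $\xi_{n-1}$ with $n\ge2$, Proposition~\ref{prop:RPMI} together with Lemma~\ref{lem:h-prolong} forces deletion of that pair, which is exactly what keeps the ordering (\ref{xi-ineq}) intact. Collecting these cases reproduces Table~\ref{tab:state-trans}.

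The step I expect to be the real obstacle is not any single transition but the bookkeeping that makes all of them mutually consistent with the relaxed inequalities (\ref{xi-ineq}). One must check that after each admissible increment, possibly followed by a Madelung deletion, the output is again an admissible tuple $\xi_0\ge\xi_1\ge\cdots\ge\xi_n\ge0$, and that the special rules at $(0)$ and at $(\xi_0,\xi_0)$ glue continuously onto the generic rules as $\xi_0\to0$ or as $\xi_1\to\xi_0$. Verifying this gluing---that the factor-of-two normalization of $\xi_0$ is exactly what Corollary~\ref{cor:demag-equiv} demands, and that no coincidence of adjacent coordinates with $i\ge1$ escapes the deletion rule---is the delicate part; once it is in place, every entry of the table is a direct substitution into the cited propositions.
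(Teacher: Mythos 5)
Your proposal is correct in substance and follows essentially the same route as the paper, whose ``proof'' of this proposition is simply the preceding derivation in Section~\ref{sec:space_of_states}: Lemma~\ref{lem:rmem-seq} together with the invertible triangular linear map (\ref{xi->rmem}) for the coordinate claim, and the same four-way case analysis (demagnetized state, tip of the initial curve via Corollary~\ref{cor:demag-equiv}, generic odd/even $n$ via Propositions~\ref{prop:rate-indep}, \ref{prop:RPMI} and Lemma~\ref{lem:h-prolong}, with the Madelung deletion restoring (\ref{xi-ineq})) for the transition rules. Your extra observations --- that the deletion rule is deliberately restricted to inner indices $1\leq i\leq n-1$ so that the pair $\xi_0=\xi_1$ survives as the marker of the ascending initial curve, and that note (i) of the table is what keeps increments inside the domain --- are exactly the implicit bookkeeping of the paper made explicit.

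One point, however, does not match your closing claim that collecting the cases ``reproduces Table~\ref{tab:state-trans}''. Your doubled update $\xi_0\mapsto\xi_0+2|\delta H|$ for the state $(\xi_0)$ (that is, $n=0$ with $\delta H<0$) is the correct transition: it is forced by the normalization $\xi_0=2\Delta H_0$ in (\ref{xi-def}), by $H=-\xi_0/2$ from (\ref{H(xi)}), and by semigroup consistency with the $(0)$ row (two successive decreases from $(0)$ must agree with a single decrease). But the table's generic even-$n$ row, read literally at $n=0$, gives $(\xi_0+|\delta H|)$, and the paper's sentence ``the similar is true for even $n$'' glosses over the same factor of two. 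So your case analysis does not reproduce the table verbatim; rather, it shows that the even-$n$ row is valid only for $n\geq 2$ and that the descending initial magnetization curve, like the ascending one, needs its own row carrying the factor of two. This is a defect of the table rather than of your argument, but you should state the correction openly instead of asserting agreement.
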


\begin{table}[H]
  \caption{State Transition Rules}  \label{tab:state-trans}
  \renewcommand*{\arraystretch}{1.6}
  \begin{center}
    \begin{tabular}{|l|l|l|}
      \hline
      \multicolumn{1}{|c|}{\multirow{2}{*}
        {\bf Current Sate}}&\multicolumn{2}{|c|}{\bf New state}\\
      \cline{2-3}
      &\multicolumn{1}{|c|}{$ \delta H > 0 $} & \multicolumn{1}{|c|}{$\delta H < 0$}\\
      \hline
      $(0)$ & $(2\delta H,2\delta H)$ & $(2|\delta H|)$\\
      $(\xi_0,\xi_0)$ & $(\xi_0 + 2\delta H,\xi_0 + 2\delta H)$ & 
      $(\xi_0,\xi_0, |\delta H|)$\\
      $(\xi_0,\ldots,\xi_n),\,n\mbox{ even}$ & $(\xi_0,\ldots,\xi_n,\delta H)$ & 
      $(\xi_0,\ldots,\xi_n+|\delta H|)$\\
      $(\xi_0,\ldots,\xi_n),\,n\mbox{ odd}$  & $(\xi_0,\ldots,\xi_n+\delta H)$ & 
      $(\xi_0,\ldots,\xi_n,|\delta H|)$\\
      \hline
    \end{tabular}
  \end{center} {\em Notes:}
  \begin{enumerate}[label=(\roman*)]
  \item $\delta H$ must be small enough for the new state to be
    in agreement with (\ref{xi-ineq}).
  \item If $\xi_n$ becomes equal to $\xi_{n-1}$, $n \geq 2$, the new
    state will be $(\xi_0,\ldots,\xi_{n-2})$.
  \end{enumerate}
\end{table}

\section{Read-Out Functions}\label{sec:read-out}

For a system that exhibits RPM and has reachable demagnetized state,
any output value $y$ that depends on the state of the system can be
expressed as a sequence of functions
\begin{equation}\label{y(xi)-read-out}
  y_n(\xi_0,\ldots,\xi_n),\quad\mbox{where}\quad n = 0,1,\,\dots\,, 
\end{equation}
defined on the region $D_n(\xi_M)$ determined by inequalities
(\ref{xi-ineq}).  Note that the input $H$ does not need to be an
argument of the read-out functions (\ref{y(xi)-read-out}) due to
(\ref{H(xi)}), cf. (\ref{fread-out}).  According to (\ref{xi-def}),
the coordinates $\xi_0,\ldots,\xi_n$ describe how a given state can be
obtained from the initial demagnetized state. As a matter of fact,
functions (\ref{y(xi)-read-out}) represent multiple order reversal
curves in the $H$-$y$ plane with $n+1$ branches.

We further restrict the consideration to systems that exhibit smooth
multiple order reversal curves, assuming that
$y_n(\xi_0,\ldots,\xi_n)$ are sufficiently many times differentiable
on $D_n(\xi_M)$ and have partial derivatives uniformly bounded with
respect to $n$:
\begin{equation}\label{deriv-bounded}
  \left| \frac{\partial^ky_n}{\partial^{k_0}\xi_0,\ldots,\partial^{k_n}\xi_n}\right|
  \leq C_k,\,\mbox{where}\quad k=k_0 + \ldots + k_n.
\end{equation}
Actually (\ref{deriv-bounded}) will be used for $k \leq 2$ only.

As mentioned above, the magnetization $M$ and probably some other
macroscopic physical values, such as thermodynamic potentials,
magnetostrictive deformation, etc., can be expressed by (\ref{y(xi)-read-out}).

Let us consider conditions that must be imposed on functions
$y_n(\xi_0,\ldots,\xi_n)$.  The initial point of $(n+1)$-th hysteresis
branch is the final point of $n$-th branch, which gives
the following condition:
\begin{equation}\nonumber
  \mbox{\Y0}\;\;\; y_n(\xi_0,\xi_1,\ldots,\xi_n) = 
  y_{n-1}(\xi_0,\xi_1,\ldots,\xi_{n-1})$,\;\mbox{ if }$\xi_n = 0,\,n\geq 1.
\end{equation}

\noindent
When $\xi_k = \xi_{k+1}$, the Madelung deletion can be applied,
thus, we have
\begin{flalign*}
  \mbox{\Y1}\;\;\;y_n(\xi_0,\ldots,\xi_k,\xi_{k+1},\ldots,\xi_n) = 
  y_{n-2}(\xi_0,\ldots,\xi_{k-1},\xi_{k+2},\ldots,\xi_n),\\
  \mbox{ if }\xi_k = \xi_{k+1},\,\, 1\leq k\leq n - 1,\, n\geq 2.
\end{flalign*}

\smallskip
\begin{lem}\label{lem:Y1-diff}
  The \Y1 condition has the equivalent form
  \begin{equation}\label{Y1-diff}
    \frac{\partial y_n}{\partial\xi_k}+\frac{\partial y_n}{\partial\xi_{k+1}} = 0,\;
    \mbox{ if }\;\xi_k=\xi_{k+1},\;1\leq k\leq n-1. 
  \end{equation}
\end{lem}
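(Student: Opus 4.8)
The plan is to read \eqref{Y1-diff} as the infinitesimal form of \Y1, obtained by moving along the ``diagonal'' on which the two equal coordinates vary together. Throughout, I fix all coordinates except $\xi_k,\xi_{k+1}$, impose $\xi_k=\xi_{k+1}=t$, and set
\[
  g(t)=y_n(\xi_0,\ldots,\xi_{k-1},t,t,\xi_{k+2},\ldots,\xi_n),
\]
which is defined for $t$ in the interval allowed by \eqref{xi-ineq}, namely $\xi_{k+2}\le t\le\xi_{k-1}$ (read $\xi_{k+2}$ as $0$ when $k+1=n$). By the chain rule and the smoothness assumed in \eqref{deriv-bounded},
\[
  g'(t)=\frac{\partial y_n}{\partial\xi_k}+\frac{\partial y_n}{\partial\xi_{k+1}},
\]
both partials being evaluated on the diagonal $\xi_k=\xi_{k+1}=t$. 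Hence ``$g'(t)=0$ for all admissible $t$'' is \emph{exactly} \eqref{Y1-diff}, and ``$g$ constant in $t$'' is its integrated form. This reduces the lemma to showing that $g$ is constant if and only if $g(t)=y_{n-2}(\xi_0,\ldots,\xi_{k-1},\xi_{k+2},\ldots,\xi_n)$.

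For the forward implication (\Y1 $\Rightarrow$ \eqref{Y1-diff}) the conclusion is immediate: the right-hand side of \Y1 does not contain the common value $t$, so \Y1 asserts precisely that $g(t)$ is constant, whence $g'(t)=0$ and \eqref{Y1-diff} follows. This is a one-line computation.

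For the reverse implication (\eqref{Y1-diff} $\Rightarrow$ \Y1) the differential condition gives that $g$ is constant, so it remains only to identify this constant with $y_{n-2}(\xi_0,\ldots,\xi_{k-1},\xi_{k+2},\ldots,\xi_n)$ by evaluating $g$ at an endpoint of its $t$-interval. I would argue by downward induction on $k$ for fixed $n$. In the base case $k=n-1$ the lower endpoint is $t=0$; there $g(0)=y_n(\xi_0,\ldots,\xi_{n-2},0,0)$, and two applications of \Y0 collapse this to $y_{n-2}(\xi_0,\ldots,\xi_{n-2})$, the desired constant. For $k<n-1$ the lower endpoint is $t=\xi_{k+2}$; at that point $\xi_{k+1}=\xi_{k+2}$, so the already-established \Y1 at index $k+1$ deletes this pair and yields $g(\xi_{k+2})=y_{n-2}(\xi_0,\ldots,\xi_{k-1},\xi_{k+2},\xi_{k+3},\ldots,\xi_n)$, which is exactly the target constant evaluated on the diagonal. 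Since $g$ is constant, the identity then holds for all $t$, completing \Y1.

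The main obstacle is this reverse direction: \eqref{Y1-diff} only controls the derivative along the diagonal and hence determines $y_n$ there up to one additive constant per choice of the remaining coordinates. Fixing that constant is the crux, and it requires a boundary value supplied by \Y0 together with the lower-index Madelung relations. The care lies in the bookkeeping --- choosing the correct endpoint ($t=0$ versus $t=\xi_{k+2}$) and checking that the collapsed argument of $y_{n-2}$ matches on both sides --- whereas the forward direction is an immediate chain-rule identity.
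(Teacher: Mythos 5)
Your proof is correct and follows essentially the same route as the paper: the forward direction is the chain rule along the diagonal, and the reverse direction integrates \eqref{Y1-diff} to get constancy of $y_n$ as the equal pair moves, slides the pair down to the next coordinate, and ultimately pins the constant via two applications of \Y0. Your downward induction on $k$ is just a cleaner packaging of the paper's explicit cascade (decreasing the pair $\xi_k=\xi_{k+1}$ to $\xi_{k+2}$, then $\xi_{k+1}=\xi_{k+2}$ to $\xi_{k+3}$, and so on until the last pair is driven to zero), so the two arguments are the same in substance, with yours making the endpoint bookkeeping more explicit.
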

\begin{proof}
  Obviously, (\ref{Y1-diff}) follows from \Y1.  Let us prove
  that (\ref{Y1-diff}) implies \Y1.  When the adjacent pair of
  equal coordinates changes, $y_n$ does not changes due to
  (\ref{Y1-diff}).  If the pair of equal coordinates in the left
  side of \Y1 is not the last one, we may decrease $\xi_k = \xi_{k+1}$
  until $\xi_k = \xi_{k+1} = \xi_{k+2}$, then decrease the pair
  $\xi_{k+1} = \xi_{k+2}$ in the same way, and so on. Eventually we
  get $\xi_{n-1} = \xi_n$. Making this pair equal to zero and using
  \Y0 twice gives the right side of \Y1.
\end{proof}

\begin{lem}\label{lem:lipschitz-plus}
  The inequality
  \begin{equation} \label{lipschitz-plus.1} \left| \frac{\partial
        y_n}{\partial\xi_i} + (-1)^{k+1}\frac{\partial
        y_n}{\partial\xi_{i+k}} \right| \leq C_2\cdot (\xi_i -
    \xi_{i+k})
  \end{equation}
   holds for for any $i\geq 1$, $k\geq 1$, such that $1\leq i+k\leq n$.
\end{lem}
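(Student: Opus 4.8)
The plan is to prove the adjacent case $k=1$ first and then obtain the general statement by telescoping. Throughout write $a_j:=\partial y_n/\partial\xi_j$, so the quantity to be bounded is $a_i+(-1)^{k+1}a_{i+k}$. Lemma~\ref{lem:Y1-diff} is exactly the statement that, for adjacent indices, $a_j+a_{j+1}=0$ whenever $\xi_j=\xi_{j+1}$; the idea is to control how far $a_j+a_{j+1}$ can drift from this value once $\xi_j>\xi_{j+1}$, and then to chain these adjacent estimates together.

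For the base case I would march from the given point toward the diagonal $\xi_i=\xi_{i+1}$ along the anti-diagonal direction. Define $\phi(t):=\bigl(a_i+a_{i+1}\bigr)$ evaluated at the point obtained by replacing $(\xi_i,\xi_{i+1})$ with $(\xi_i-t,\,\xi_{i+1}+t)$, for $t\in[0,t^\ast]$ with $t^\ast:=(\xi_i-\xi_{i+1})/2$. At $t=t^\ast$ the two coordinates coincide, so $\phi(t^\ast)=0$ by Lemma~\ref{lem:Y1-diff}, whereas $\phi(0)$ is the target. Differentiating and using equality of mixed partials, the cross terms cancel and
\[
  \phi'(t)=\frac{\partial^2 y_n}{\partial\xi_{i+1}^2}-\frac{\partial^2 y_n}{\partial\xi_i^2},
\]
so $|\phi'|\le 2C_2$ by (\ref{deriv-bounded}). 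Hence $|\phi(0)|=\bigl|\phi(0)-\phi(t^\ast)\bigr|\le 2C_2\,t^\ast=C_2(\xi_i-\xi_{i+1})$, which is precisely the claimed bound for $k=1$. One must also check that the whole segment stays inside $D_n(\xi_M)$; this is immediate from (\ref{xi-ineq}), since lowering $\xi_i$ and raising $\xi_{i+1}$ toward their common mean preserves every inequality $\xi_{i-1}\ge\xi_i\ge\xi_{i+1}\ge\xi_{i+2}$.

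For general $k$ I would telescope the adjacent estimates via the identity
\[
  a_i+(-1)^{k+1}a_{i+k}=\sum_{j=i}^{i+k-1}(-1)^{\,j-i}\bigl(a_j+a_{j+1}\bigr),
\]
in which every interior term $a_{i+1},\dots,a_{i+k-1}$ cancels between consecutive summands. Applying the base case to each adjacent pair and summing gives
\[
  \bigl|a_i+(-1)^{k+1}a_{i+k}\bigr|\le\sum_{j=i}^{i+k-1}|a_j+a_{j+1}|\le C_2\sum_{j=i}^{i+k-1}(\xi_j-\xi_{j+1})=C_2(\xi_i-\xi_{i+k}),
\]
where the last sum telescopes by (\ref{xi-ineq}). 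Equivalently one can run an induction on $k$, combining the bound for $k-1$ with a single adjacent estimate.

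The main obstacle is the base case, and specifically obtaining the constant $C_2$ exactly: a naive one-variable integration that moves only $\xi_i$ (or only $\xi_{i+1}$) toward the diagonal leaves an uncancelled mixed second derivative and yields only $2C_2(\xi_i-\xi_{i+1})$. The anti-diagonal path is what makes the mixed terms cancel while halving the path length, so both effects must be combined to reach the stated inequality. Once the base case is in hand with the sharp constant, the telescoping step is routine.
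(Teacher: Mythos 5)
Your proof is correct and follows essentially the same route as the paper: the adjacent-pair estimate $\bigl|\partial y_n/\partial\xi_j + \partial y_n/\partial\xi_{j+1}\bigr| \leq C_2(\xi_j-\xi_{j+1})$ obtained from Lemma~\ref{lem:Y1-diff} together with (\ref{deriv-bounded}), followed by the identical alternating telescoping sum. The paper simply asserts the adjacent case; your anti-diagonal path argument supplies a clean derivation of it with the sharp constant $C_2$ (where a naive one-variable march would give $2C_2$), so it fills in a detail the paper leaves implicit rather than taking a different approach.
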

\begin{proof}
  Taking into account Lemma \ref{lem:Y1-diff} and
  (\ref{deriv-bounded}) we can write
  \begin{equation}\label{lipschitz-plus.2}
    \left|\frac{\partial y_n}{\partial\xi_i} + \frac{\partial y_n}{\partial\xi_{i+1}}\right| 
    \leq C_2\cdot (\xi_i - \xi_{i+1}).
  \end{equation}
  The absolute value
  \begin{equation}\nonumber
    \left|
      \left(\frac{\partial y_n}{\partial\xi_i} + \frac{\partial y_n}{\partial\xi_{i+1}}\right)
      -\left(\frac{\partial y_n}{\partial\xi_{i+1}} + \frac{\partial y_n}{\partial\xi_{i+2}}\right)
      +\ldots
      +(-1)^{k+1}\left(\frac{\partial y_n}{\partial\xi_{i+k-1}} + \frac{\partial y_n}{\partial\xi_{i+k}}\right)
    \right|
  \end{equation}
  equals to the left side of (\ref{lipschitz-plus.1}), and, as follows
  from (\ref{lipschitz-plus.2}), is not greater than $C_2\cdot (\xi_i
  - \xi_{i+k})$.
\end{proof}

Conditions \Y0, \Y1 do not guarantee, e.g, that after the
demagnetization process the output value $y$ will be the same as in the
initial demagnetized state.  The output $y$ at the end of any input
applied to the state $O'$ obtained after the demagnetization, as
$\varepsilon\rightarrow 0$, must tend to the output $y$ after applying
the same input to the initial demagnetized sate $O$ (see Figure
\ref{fig:pdemag}). If this is true, we can say that the demagnetized
state is reachable. Taking into account that the ``full''
demagnetization can be replaced by the ``partial'' demagnetization, as
described in Section \ref{sec:demag}, the sufficient and necessary
condition for the reachability of demagnetized state can be written as
follows:
\begin{equation}\label{y-pdemag}
  \lim_{N \to \infty}y_{n+2N}(\xi_0,\xi_0 -\varepsilon,\xi_0 - 2\varepsilon,\ldots,\xi_0 - 2N\varepsilon,
  \tilde{\xi}_1,\ldots,\tilde{\xi}_n) = y_n(\tilde{\xi}_0, \tilde{\xi}_1, \dots, \tilde{\xi}_n),
\end{equation}
where $\varepsilon = (\xi_0 - \tilde{\xi}_0)/2N$, and the variables 
$\tilde{\xi}_1, \ldots, \tilde{\xi}_n$ correspond to an arbitrary process
performed after the ``partial'' demagnetization. 

\medskip
Let us consider the following condition:
\begin{equation}\nonumber
  \mbox{\Y2}\;\;\; 2\frac{\partial y_n}{\partial \xi_0} + \frac{\partial y_n}{\partial \xi_1} = 0,\; 
  \mbox{if}\; \xi_0 = \xi_1,\, n = 1, 2, \ldots\,.
\end{equation}

\begin{lem} Condition \Y2 implies \label{demag-step}
  \begin{equation}\nonumber
    y_n(\xi_0,\xi_0 - \varepsilon,\xi_2, \ldots, \xi_n) =  
    y_n(\xi_0 - 2\varepsilon, \xi_0 - 2\varepsilon,\xi_2, \ldots, \xi_n) + O(\varepsilon^2),\;
    \mbox{as}\; \varepsilon \rightarrow 0,
  \end{equation}
  where the estimate $O(\varepsilon^2)$ does not depend on $n$.
\end{lem}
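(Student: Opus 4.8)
The plan is to reduce the statement to a second-order Taylor expansion in the first two coordinates alone, holding $\xi_2,\ldots,\xi_n$ fixed, and—crucially—to center the expansion at a point lying on the diagonal $\xi_0=\xi_1$ so that condition \Y2 applies verbatim. Write $g(a,b):=y_n(a,b,\xi_2,\ldots,\xi_n)$, so that $g_a = \partial y_n/\partial\xi_0$ and $g_b=\partial y_n/\partial\xi_1$. The two points being compared are $A=(\xi_0,\,\xi_0-\varepsilon)$ and $B=(\xi_0-2\varepsilon,\,\xi_0-2\varepsilon)$. The key observation is that both are reached from the diagonal point $P_0=(\xi_0-\varepsilon,\,\xi_0-\varepsilon)$ by displacements of size $O(\varepsilon)$, namely $A=P_0+(\varepsilon,0)$ and $B=P_0+(-\varepsilon,-\varepsilon)$.

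First I would Taylor-expand $g$ at $P_0$ along each displacement, using the second-order remainder form:
\[
  g(A)=g(P_0)+\varepsilon\, g_a(P_0)+R_A,\qquad
  g(B)=g(P_0)-\varepsilon\, g_a(P_0)-\varepsilon\, g_b(P_0)+R_B.
\]
Subtracting, the zeroth-order terms cancel and one obtains
\[
  g(A)-g(B)=\bigl(2\,g_a(P_0)+g_b(P_0)\bigr)\varepsilon+(R_A-R_B).
\]
Since $P_0$ lies on the diagonal $\xi_0=\xi_1$, condition \Y2 gives exactly $2\,g_a(P_0)+g_b(P_0)=0$, so the entire first-order contribution vanishes identically and only the remainder survives.

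It then remains to bound $R_A-R_B$. Each remainder is a second-order term of the shape $\tfrac12\sum(\text{displacement component})^2\cdot(\text{second partial at an intermediate point})$; because the displacement components are $\pm\varepsilon$ or $0$ and every second partial derivative is bounded by the single constant $C_2$ of (\ref{deriv-bounded}) regardless of the sequence length, one gets $|R_A|,|R_B|\leq \text{const}\cdot C_2\,\varepsilon^2$. Hence $g(A)-g(B)=O(\varepsilon^2)$ with a constant depending only on $C_2$, which is precisely the claimed $n$-independent estimate.

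The main subtlety is the choice of the expansion center $P_0$ on the diagonal: this is what makes \Y2 directly applicable. Expanding at $A$ or at $B$ instead would leave a residual first-order term that \Y2 (stated only for $\xi_0=\xi_1$) does not annihilate, and the asserted cancellation would break. The only other point requiring care is the uniformity of the remainder in $n$, which is exactly why the bound (\ref{deriv-bounded}) on second derivatives must be uniform with respect to $n$; without it, the $O(\varepsilon^2)$ constant could in principle grow with the length of the coordinate sequence, and the statement that the estimate "does not depend on $n$" would be unjustified.
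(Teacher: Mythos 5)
Your proof is correct and takes essentially the same route as the paper's, which Taylor-expands both points around the diagonal center $(\xi_0,\xi_0,\xi_2,\ldots,\xi_n)$ rather than your $(\xi_0-\varepsilon,\xi_0-\varepsilon,\xi_2,\ldots,\xi_n)$, cancels the first-order terms via \Y2, and gets the $n$-uniform remainder from (\ref{deriv-bounded}). One peripheral quibble: your closing remark is mistaken about $B$ --- since $B=(\xi_0-2\varepsilon,\xi_0-2\varepsilon,\xi_2,\ldots,\xi_n)$ itself lies on the diagonal $\xi_0=\xi_1$, expanding $g(A)$ directly around $B$ also yields the claim (the first-order coefficient is $\varepsilon\bigl(2g_a(B)+g_b(B)\bigr)=0$ by \Y2), so only expansion about the off-diagonal point $A$ would genuinely fail.
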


\begin{proof}
  From the Taylor's theorem we have
  \begin{align*}
    y_n(\xi_0,\xi_0 - \varepsilon,\,\xi_2,\ldots,\xi_n) &
    = y_n(\xi_0,\xi_0,\xi_2,\ldots,\xi_n) - \frac{\partial y_n}{\partial\xi_1}\cdot\varepsilon + O(\varepsilon^2),\\
    y_n(\xi_0 - 2\varepsilon, \xi_0 - 2\varepsilon,\xi_2,\ldots,\xi_n) &=
    y_n(\xi_0,\xi_0,\xi_2,\ldots,\xi_n) - \left(\frac{\partial y_n}{\partial
        \xi_0} + \frac{\partial y_n}{\partial \xi_1}\right)\cdot
    2\varepsilon + O(\varepsilon^2),
  \end{align*}
  where the estimate $O(\varepsilon^2)$ does not depend on $n$ due to
  (\ref{deriv-bounded}).  Subtracting one equation from the other
  and using \Y2 gives the statement of the lemma.
\end{proof}

\begin{lem}\label{lem:Y2->y-demag}
  Condition \Y2 implies that after the demagnetization the output
  value $y$ returns to its value in the initial demagnetized state as follows:
\begin{equation}\label{y-demag}
  \lim_{N \to \infty} y_{2N} ( \xi_0, \xi_0 - \varepsilon, \xi_0 - 2\varepsilon, \ldots, \xi_0- 2N\varepsilon) = y_0(0),
\end{equation}
where $\varepsilon = \xi_0/(2N+1)$, and $N$ denotes the number of
demagnetization cycles.
\end{lem}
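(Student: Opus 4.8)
The plan is to peel off one demagnetization cycle at a time, using Lemma~\ref{demag-step} to collapse the leading pair of coordinates at the cost of a uniform $O(\varepsilon^2)$ error, and then the Madelung deletion \Y1 to shorten the sequence exactly. I would fix $\varepsilon = \xi_0/(2N+1)$ from the outset, so that the demagnetization state has coordinates forming the strictly decreasing arithmetic progression $(2N+1)\varepsilon,\,2N\varepsilon,\,\ldots,\,2\varepsilon,\,\varepsilon$; in particular the last coordinate is $\xi_0 - 2N\varepsilon = \varepsilon$. The key observation is that one peeling step carries this state into a demagnetization state of exactly the same shape, but with $N$ replaced by $N-1$ and the \emph{same} step $\varepsilon$.

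Concretely, I would apply Lemma~\ref{demag-step} to the first two coordinates $\xi_0 = (2N+1)\varepsilon$ and $\xi_0 - \varepsilon = 2N\varepsilon$, letting the remaining entries play the role of $\xi_2,\ldots,\xi_n$. This replaces them by $\xi_0 - 2\varepsilon,\,\xi_0 - 2\varepsilon$ up to a uniform $O(\varepsilon^2)$, and since the third coordinate already equals $(2N-1)\varepsilon = \xi_0 - 2\varepsilon$, the first three positions all become $(2N-1)\varepsilon$. The adjacent pair in positions $1$ and $2$ is now equal, so \Y1 with $k=1$ removes it exactly, lowering the index from $2N$ to $2N-2$ and leaving the sequence $(2N-1)\varepsilon,(2N-2)\varepsilon,\ldots,\varepsilon$. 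This is again a demagnetization state, with leading coordinate $\xi_0 - 2\varepsilon$, the same spacing $\varepsilon$, and $N-1$ cycles. The tail retains its uniform spacing throughout, so the step may be repeated verbatim.

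Iterating $N$ times drives the leading coordinate down through $\xi_0,\,\xi_0 - 2\varepsilon,\,\xi_0 - 4\varepsilon,\ldots$, reaching $\xi_0 - 2N\varepsilon = \varepsilon$ after exactly $N$ steps, at which point the sequence has collapsed to the single coordinate $\varepsilon$, that is, to $y_0(\varepsilon)$. Each step contributes one error term bounded by $C\varepsilon^2$ with $C$ independent of the current index---this uniformity is precisely what Lemma~\ref{demag-step} supplies---so the total accumulated error is at most $N\cdot C\varepsilon^2$. Substituting $\varepsilon = \xi_0/(2N+1)$ gives $N\,C\varepsilon^2 = C\,\xi_0^2\,N/(2N+1)^2 = O(1/N)$, which vanishes as $N\to\infty$. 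Simultaneously $\varepsilon\to 0$, so $y_0(\varepsilon)\to y_0(0)$ by the differentiability, hence continuity, of $y_0$ assumed in~(\ref{deriv-bounded}); combining the two limits yields the claimed identity.

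The step I expect to require the most care is the bookkeeping that converts the single-cycle estimate of Lemma~\ref{demag-step} into a convergent telescoping sum: one must verify that the error is genuinely uniform in the sequence length, so that $N$ such terms may be added under a single constant, and that the Madelung deletion is legitimately applicable at every stage because exactly three equal leading coordinates are produced. The decisive point is the balance encoded in the choice $\varepsilon = \xi_0/(2N+1)$: it makes the number of cycles grow like $1/\varepsilon$ while each cycle costs only $O(\varepsilon^2)$, so the cumulative error is $O(\varepsilon) = O(1/N)$ and disappears in the limit, leaving only the continuous passage $y_0(\varepsilon)\to y_0(0)$.
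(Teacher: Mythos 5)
Your proposal is correct and takes essentially the same route as the paper's own proof: one application of Lemma~\ref{demag-step} to the leading pair, followed by the Madelung deletion \Y1 of the resulting equal adjacent coordinates, iterated $N$ times, with the accumulated error $N\cdot O(\varepsilon^2)=O(1/N)$ vanishing under the choice $\varepsilon=\xi_0/(2N+1)$ and continuity of $y_0$ giving $y_0(\varepsilon)\rightarrow y_0(0)$. If anything, you make explicit details the paper leaves implicit, namely the arithmetic-progression bookkeeping showing each step reproduces a demagnetization state with the same spacing $\varepsilon$, the appearance of three equal leading coordinates licensing \Y1, and the uniformity in $n$ of the $O(\varepsilon^2)$ estimate that justifies summing $N$ error terms under one constant.
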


\begin{proof}
  The left side of (\ref{y-demag}) can be transformed to
  \begin{equation}\nonumber
    y_{2N} ( \xi_0 -2\varepsilon, \xi_0 - 2\varepsilon, \xi_0 - 2\varepsilon, \ldots, \xi_0 - 2N\varepsilon) + O(\varepsilon^2)
  \end{equation}
  by using Lemma \ref{demag-step}. Applying the Madelung deletion \Y1 to
  the second and third arguments gives
  \begin{equation}\nonumber
    y_{2N-2}(\xi_0-2\varepsilon,\xi_0-3\varepsilon,\ldots,\xi_0-2N\varepsilon)
     + O(\varepsilon^2).
  \end{equation}
  By repeating the same steps $N$ times, finally we get
  \begin{equation}\nonumber
    y_n (\xi_0 -2N\varepsilon) + N\cdot O(\varepsilon^2) = 
    y_n (\varepsilon) + N\cdot O(\varepsilon^2) \rightarrow y_0(0),
  \end{equation}
  taking into account that  $\varepsilon = \xi_0/(2N+1)$.
\end{proof}

\begin{prop}\label{prop:rdemag<->Y2}
  Condition \Y2 is necessary and sufficient for the reachability of
  demagnetized state according to (\ref{y-pdemag}) .
\end{prop}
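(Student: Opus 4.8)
The plan is to treat both implications through one device: the term-by-term reduction of the left-hand side of (\ref{y-pdemag}) obtained by peeling off one demagnetization half-cycle at a time, exactly as in the proofs of Lemma~\ref{demag-step} and Lemma~\ref{lem:Y2->y-demag}, but retaining the first-order correction. Expanding by Taylor's theorem as in Lemma~\ref{demag-step}, yet without invoking \Y2, one obtains for every index $j$ and every admissible ``rest'' the identity $y_j(\xi_0,\xi_0-\varepsilon,\mathrm{rest}) = y_j(\xi_0-2\varepsilon,\xi_0-2\varepsilon,\mathrm{rest}) + \bigl(2\,\partial y_j/\partial\xi_0 + \partial y_j/\partial\xi_1\bigr)\varepsilon + O(\varepsilon^2)$, where the derivatives are taken on the diagonal $\xi_0=\xi_1$ and the $O(\varepsilon^2)$ remainder is uniform in $j$ by (\ref{deriv-bounded}). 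The ensuing Madelung deletion \Y1 (which we assume throughout, together with \Y0) is exact, so each round lowers the index by two and contributes exactly one first-order defect. Writing $G_j := 2\,\partial y_j/\partial\xi_0 + \partial y_j/\partial\xi_1$ and telescoping $N$ rounds with $\varepsilon=(\xi_0-\tilde\xi_0)/2N$, the left-hand side of (\ref{y-pdemag}) becomes $y_n(\tilde\xi_0,\tilde\xi_1,\ldots,\tilde\xi_n)$ plus the accumulated defect $\Sigma_N = \sum_{m=1}^{N} G_{n+2(N-m+1)}(Q_m)\,\varepsilon + N\cdot O(\varepsilon^2)$, where $Q_m$ is the diagonal point reached at round $m$.

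Since $\varepsilon=(\xi_0-\tilde\xi_0)/2N$, the tail term $N\cdot O(\varepsilon^2)=O(1/N)\to 0$ in every case. For sufficiency I would simply observe that \Y2 asserts precisely $G_j\equiv 0$ on the diagonal $\xi_0=\xi_1$; since each $Q_m$ lies on that diagonal, every first-order defect vanishes, $\Sigma_N=O(1/N)\to 0$, and the reduction lands exactly on $y_n(\tilde\xi_0,\ldots,\tilde\xi_n)$. This establishes (\ref{y-pdemag}) and matches Lemma~\ref{lem:Y2->y-demag} in the special case $n=0$, $\tilde\xi_0=0$.

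For necessity I would argue by contraposition. Suppose \Y2 fails, so $G_{n_0}\neq 0$ at some diagonal point; by the $C^2$ hypothesis $G_{n_0}$ then keeps a fixed sign on a neighbourhood. Reachability forces $\Sigma_N\to 0$, i.e. the Riemann-type sum $\sum_{m} G_{n+2(N-m+1)}(Q_m)\,\varepsilon \to 0$. The intended conclusion is that, because this must hold for every choice of endpoints $\xi_0,\tilde\xi_0$ and of trailing block $\tilde\xi_1,\ldots,\tilde\xi_n$, and because the sum behaves as a Riemann sum converging to an integral $\tfrac12\int_{\tilde\xi_0}^{\xi_0}\gamma(\xi)\,d\xi$ of a limiting diagonal-defect density $\gamma$, the integrand must vanish identically on every sub-interval, which returns \Y2.

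The hard part is exactly this last step. As $m$ runs from $1$ to $N$ the index $n+2(N-m+1)$ grows with $N$, and the evaluation point $Q_m$ has growing dimension --- its ``rest'' is an ever-finer descending, demagnetization-like tail --- so the summand is not a fixed function of a single variable, and neither the convergence of the sum nor the identification of its limit with the density $\gamma$ built from $2\,\partial y/\partial\xi_0+\partial y/\partial\xi_1$ is automatic. I expect to control this with the $n$-uniform derivative bounds (\ref{deriv-bounded}) and the Lipschitz-type estimate of Lemma~\ref{lem:lipschitz-plus}, which tame the dependence on the fine tail and keep the defects coherent enough to prevent spurious cancellation, and then to exploit the arbitrariness of $\xi_0,\tilde\xi_0$ and of the trailing block to localize the integral and recover $G_n=0$ for each individual $n$. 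Reconciling the fact that reachability is an $N\to\infty$ (hence asymptotic-index) statement with the pointwise, every-$n$ content of \Y2 is the genuine obstacle, and precisely where the uniform smoothness hypotheses are indispensable.
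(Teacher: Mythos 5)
Your sufficiency direction is sound and coincides with the paper's (the paper simply says it repeats the proof of Lemma \ref{lem:Y2->y-demag} and omits it): peel off half-cycles, note that \Y2 kills every first-order defect, and absorb $N\cdot O(\varepsilon^2)=O(1/N)$. The necessity direction, however, has a genuine gap, and you have located it yourself: your plan rests on an object you never construct, the ``limiting diagonal-defect density'' $\gamma$. The summands $G_{n+2(N-m+1)}(Q_m)$ are evaluated at points of growing dimension with an ever-finer staircase tail, and nothing in your sketch relates them to the fixed-index quantity $G_n$ that \Y2 constrains; in particular, in your contrapositive step the assumption that $G_{n_0}$ keeps a fixed sign near one diagonal point of dimension $n_0$ says nothing about the signs of the actual summands, whose index tends to infinity. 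Proving that the Riemann-type sum converges to $\tfrac12\int\gamma$ with $\gamma$ recovering $2\,\partial y_n/\partial\xi_0+\partial y_n/\partial\xi_1$ is not a technicality to be ``controlled'' by (\ref{deriv-bounded}) and Lemma \ref{lem:lipschitz-plus}; it is essentially the whole content of the implication, and the proposal does not supply it.

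The paper avoids this machinery by not peeling rounds at all in the necessity direction, and by specializing \emph{before} taking limits. It fixes $\tilde{\xi}_0=\tilde{\xi}_1$ (the diagonal case, which is all \Y2 asks about), writes the staircase value as (\ref{y-pdemag*}), and introduces the one-parameter homotopy (\ref{y-pdemag*(theta)}) in which all odd-indexed staircase arguments are shifted simultaneously by $\theta\varepsilon$: at $\theta=1$ the Madelung deletions \Y1 collapse everything exactly to $y_n(\tilde{\xi}_0+\Delta\xi,\tilde{\xi}_1,\ldots,\tilde{\xi}_n)$. A single Taylor expansion in $\theta$ gives (\ref{y-pdemag*(theta)-taylor}), and then the two ingredients you correctly guessed are used to prove the precise bridge you are missing: Lemma \ref{lem:lipschitz-plus} shows each $\partial y_{n+2N}/\partial\xi_{2i-1}$ equals $\partial y_{n+2N}/\partial\tilde{\xi}_1 + O(\Delta\xi)$ on the diagonal, and differentiating the homotopy (equation (\ref{y-pdemag*(theta)-diff-taylor})) shows $\partial y_{n+2N}/\partial\tilde{\xi}_1 = \partial y_n/\partial\tilde{\xi}_1 + O(\Delta\xi)$ uniformly in $N$. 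Hence all $N$ first-order terms are the \emph{same} constant up to $O(\Delta\xi)$, the sum collapses to $\bigl(\partial y_n/\partial\tilde{\xi}_1\bigr)\Delta\xi/2 + O(\Delta\xi^2)$ with $N$-independent error, and no limiting density or localization over subintervals is ever needed. Letting $N\to\infty$ first (where reachability (\ref{y-pdemag}) enters, once), then dividing by $\Delta\xi$ and letting $\Delta\xi\to 0$, yields \Y2 directly for each fixed $n$ --- resolving exactly the asymptotic-index versus fixed-$n$ tension you flagged as the obstacle.
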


\begin{proof}
  The proof that \Y2 implies (\ref{y-pdemag}) is almost the
  same as the proof of Lemma \ref{lem:Y2->y-demag} and
  is omitted.  Let us prove that (\ref{y-pdemag}) implies \Y2.
  Rewrite the left side of (\ref{y-pdemag}) as follows:
  \begin{equation}\label{y-pdemag*}
    y_{n+2N}(\xi_0,\xi_1,\dots,\xi_{2N},\tilde{\xi}_1,\ldots,\tilde{\xi}_n),
  \end{equation}
  where \vspace{-3mm}
  \begin{equation}\nonumber
    \xi_0 - \xi_1 = \xi_1 - \xi_2 = \ldots = \xi_{2N-1}-\xi_{2N} = \frac{\Delta\xi}{2N} = \varepsilon,\; \Delta\xi = \xi_0  - \tilde{\xi}_0.
  \end{equation}

  Let us replace in (\ref{y-pdemag*}) the arguments $\xi_i$
  having odd indexes by $\xi_i - \theta\varepsilon$ and consider the
  result as the function of $\theta$
  \begin{equation}\label{y-pdemag*(theta)}
    y(\theta) = y_{n + 2N} (\xi_0, \xi_1-\theta\varepsilon, \xi_2, \xi_3-\theta\varepsilon, \ldots, \xi_{2N-1}-\theta\varepsilon, \xi_{2N}, 
    \tilde{\xi}_1, \ldots, \tilde{\xi}_n).
  \end{equation}
  Note that $y(0)$ equals to (\ref{y-pdemag*}), and
  $ y(1) = y_n(\tilde{\xi}_0 +\Delta\xi,\tilde{\xi}_1,\ldots\tilde{\xi}_n)$
  due to the Madelung deletion of the adjacent arguments $\xi_i$.
  Applying the Taylor's theorem to $y(\theta)$ gives
  \begin{align}
    y_n(\tilde{\xi}_0 + \Delta\xi,\tilde{\xi_1},\ldots,\tilde{\xi}_n) &= 
    y_{n + 2N} (\xi_0,\xi_1,\dots,\xi_{2N},\tilde{\xi}_1,\ldots,\tilde{\xi}_n) \nonumber\\
    &+ \sum^N_{i=1}\frac{\partial y_{n+2N}}{\partial\xi_{2i-1}}\cdot
    \frac{\Delta\xi}{2N}+\frac{1}{2}\sum^N_{i,j=1}\frac{\partial^2 y_{n+2N}}
    {\partial\xi_{2i-1}\partial\xi_{2j-1}}\cdot\left(\frac{\Delta\xi}{2N}\right)^2.
    \label{y-pdemag*(theta)-taylor}
  \end{align}
  Similarly, for the derivative $\partial y(\theta)/\partial\tilde{\xi}_1$
  \begin{equation}\label{y-pdemag*(theta)-diff-taylor}
    \frac{\partial y_n}{\partial\tilde{\xi}_1} = 
    \frac{\partial y_{n+2N}}{\partial\tilde{\xi}_1} + 
    \sum^N_{i=1}\frac{\partial^2 y_{n+2N}}{\partial\tilde\xi_1\partial\xi_{2i-1}}\cdot
    \left(\frac{\Delta\xi}{2N}\right).  
  \end{equation}
  In (\ref{y-pdemag*(theta)-taylor}) and
  (\ref{y-pdemag*(theta)-diff-taylor}) the second derivatives are
  taken at some point $\theta\in[0,1]$.

  \smallskip 
  If $\tilde{\xi}_0 = \tilde{\xi}_1$ then $\xi_{2i-1} \in
  [\tilde{\xi}_1 + \Delta\xi ,\tilde{\xi}_1]$, and from Lemma
  \ref{lem:lipschitz-plus}
  \begin{equation}\label{y-pdemag-lipschitz}
    \frac{\partial y_{n+2N}}{\partial\xi_{2i-1}} = 
    \frac{\partial y_{n+2N}}{\partial\tilde{\xi}_1} +O(\Delta\xi),\;
    \mbox{if }\;\tilde{\xi}_0 = \tilde{\xi}_1.
  \end{equation}
  From the boundedness of the derivatives (\ref{deriv-bounded})
  follows that the last term in (\ref{y-pdemag*(theta)-diff-taylor})
  is $O(\Delta\xi)$.  Combining (\ref{y-pdemag*(theta)-diff-taylor}),
  (\ref{y-pdemag-lipschitz}) gives
  \begin{equation}\label{y-pdemag-diff}
    \frac{\partial y_{n+2N}}{\partial\xi_{2i-1}} = 
    \frac{\partial y_n}{\partial\tilde{\xi}_1}+O(\Delta\xi),\;
    \mbox{if }\;\tilde{\xi}_0 = \tilde{\xi}_1.
  \end{equation}

  The estimate $O(\Delta\xi)$ in
  (\ref{y-pdemag*(theta)-diff-taylor}), (\ref{y-pdemag-lipschitz}),
  and hence in (\ref{y-pdemag-diff}), does not depend on $N$.  Note
  that the last term in (\ref{y-pdemag*(theta)-taylor}) is of
  $O(\Delta\xi^2)$ order and does not depend on $N$ also, because the
  number of terms in the double sum is $N^2$. After substituting
  (\ref{y-pdemag-diff}) into (\ref{y-pdemag*(theta)-taylor}) we get
  \begin{equation}\nonumber
    y_{n+2N}(\xi_0,\xi_1,\dots,\xi_{2N},\tilde{\xi}_1,\ldots,\tilde{\xi}_n) =
    y_n(\tilde{\xi}_0 +\Delta\xi,\tilde{\xi}_1,\ldots\tilde{\xi}_n) +
    \frac{\partial y_n}{\partial\tilde{\xi}_1}\cdot
    \frac{\Delta\xi}{2} + O(\Delta\xi^2), \; 
    \mbox{if} \; \tilde{\xi}_0 = \tilde{\xi}_1.
  \end{equation}
  Here the right side including the estimate $O(\Delta\xi^2)$ does not
  depend on $N$. Due to the reachability of the demagnetized state
  according to (\ref{y-pdemag}), the right side tends to
  $y_n(\tilde{\xi}_0, \ldots, \tilde{\xi}_n)$, as
  $N\rightarrow\infty$.  Therefore, we have
  \begin{equation}\nonumber
    y_n(\tilde{\xi}_0,\ldots,\tilde{\xi}_n) = y_n(\tilde{\xi}_0 + 
    \Delta\xi, \tilde{\xi}_1,\ldots,\tilde{\xi}_n) + 
    \frac{\partial y_n}{\partial\tilde{\xi}_1}\cdot\frac{\Delta\xi}{2} 
    +O(\Delta\xi^2),\;\mbox{if} \; \tilde{\xi}_0 = \tilde{\xi}_1.
  \end{equation}
  Dividing the last equation by $\Delta\xi$ and taking the limit
  $\Delta\xi \rightarrow 0$ gives \Y2.
\end{proof}

To understand what condition \Y2 means in terms of hysteresis curves
in the \mbox{$H$-$y$} plane, let us consider two states: $(\xi_0,
\xi_0)$, which is the state on the ascending initial magnetization
curve, and $(\xi_0, \xi_1)$, which is the state on the ascending
branch of the symmetric cycle.  After the magnetic field increases by
$\delta H > 0$, the first and the second states will be $(\xi_0 +
2\delta H, 2\xi_0 + 2\delta H)$ and $(\xi_0, \xi_1 + \delta H)$
respectively (see Table \ref{tab:state-trans}).  Calculating
derivatives with respect to $\delta H$ we get in the first and the
second cases:
\begin{equation}\nonumber
  \frac{d}{d (\delta H)}y_1(\xi_0+2\delta H, \xi_0+2\delta H) = 2\frac{\partial y_1}{\partial \xi_0} + 2\frac{\partial y_1}{\partial \xi_1}, \qquad
  \frac{d}{d (\delta H)}y_1(\xi_0,\xi_1+\delta H) = \frac{\partial y_1}{\partial \xi_1}.
\end{equation}
If $\xi_1 \rightarrow \xi_0$, two curves meet each other and, as
follows from \Y2, are tangent at this point.

\medskip Condition \Y1 can be expressed in the form similar to \Y2
according to Lemma \ref{lem:Y1-diff}. Both conditions can be combined
in one
\begin{equation}\nonumber
  \mbox{\Y{1*}}\;\;\; 2^{\delta_{0i}}\frac{\partial y_n}{\partial \xi_i} + 
  \frac{\partial y_n}{\partial \xi_{i+1}} = 0,\; \mbox{if}\; 
  \xi_i = \xi_{i+1},\, n = 0,1,\ldots\,,
\end{equation}
where $\delta_{ij}$ is the Kronecker delta.

\bigskip

Consider the state $(\xi_0,\xi_1,\xi_2,\,\ldots\,,\xi_n)$.  It can be
obtained with the input $H^t\in{\cal U}^*$, as describes
Lemma~\ref{lem:r-input}.  Taking into account Corollary
\ref{cor:demag-equiv}, it is not difficult to see that if
$\xi_1=\xi_0$, the state $(\xi_0,\xi_2,\, \ldots\, ,\xi_n)$ can be
obtained with the input $-H^t$.  Usually we may interest in the
read-out functions that satisfy one of the two symmetry conditions:
\begin{align}\nonumber
&\mbox{\Y{s}}\quad y_n(\xi_0, \xi_1, \xi_2, \ldots , \xi_n) = 
y_{n-1}(\xi_0,\xi_2,\ldots,\xi_n),\;\mbox{if}\;\xi_1 = \xi_0,\; 
 n = 1,2,\ldots\,, \nonumber\\[1em]
& \mbox{\Y{a}}\quad y_n(\xi_0,\xi_1,\xi_2\ldots,\xi_n) = 
-y_{n-1}(\xi_0,\xi_2,\ldots,\xi_n),\;\mbox{if}\;\xi_1 = \xi_0\;
n = 1,2,\ldots\,.\nonumber
\end{align}
We call these functions {\em symmetric} and {\em antisymmetric}
respectively.  Antisymmetric functions can describe magnetization $M$,
\mbox{$B$-field}, and \mbox{$H$-field}, see, e.g., (\ref{H(xi)}). The
symmetric functions can describe physical values like the energy of
the system or magnetostrictive deformation.  For antisymmetric
functions $y_0(0) = 0$, because $y_1(0,0) = -y_0(0)$ due to \Y{a} and
$y_1(0,0) = y_0(0)$ due to \Y0.  It is easy to check that the
following proposition holds:
\begin{prop}
  Any functions $y_n(\xi_1, \dots, \xi_n)$, $n = 0,1\ldots\,$, can be
  expressed as the sum of its symmetric and antisymmetric parts:
  \begin{align*}
    y_n^{(s)}(\xi_0,\xi_1,\ldots,\xi_n) &= \frac{1}{2}[y_n(\xi_0,\xi_1,\ldots,\xi_n) + y_{n+1}(\xi_0,\xi_0,\xi_1,\dots,\xi_n)],\\
    y_n^{(a)}(\xi_0,\xi_1,\ldots,\xi_n) &=
    \frac{1}{2}[y_n(\xi_0,\xi_1,\ldots,\xi_n) -
    y_{n+1}(\xi_0,\xi_0,\xi_1,\dots,\xi_n)].
  \end{align*}
\end{prop}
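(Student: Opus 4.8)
The plan is to verify three things in turn: that the stated formulas reproduce $y_n$ additively, that the family $\{y_n^{(s)}\}$ satisfies the symmetry condition \Y{s}, and that $\{y_n^{(a)}\}$ satisfies \Y{a}. The first is immediate, since adding the two displayed expressions gives $y_n^{(s)}+y_n^{(a)}=y_n$ term by term, the two copies of $y_{n+1}(\xi_0,\xi_0,\xi_1,\ldots,\xi_n)$ cancelling. The whole content of the proposition therefore lies in the two symmetry conditions, and the single tool needed for them is the Madelung deletion \Y1 applied to the family $\{y_n\}$.

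For \Y{s} I would substitute $\xi_1=\xi_0$ into the definition of $y_n^{(s)}$, obtaining
\[
  y_n^{(s)}(\xi_0,\xi_0,\xi_2,\ldots,\xi_n)=\tfrac12\bigl[y_n(\xi_0,\xi_0,\xi_2,\ldots,\xi_n)+y_{n+1}(\xi_0,\xi_0,\xi_0,\xi_2,\ldots,\xi_n)\bigr].
\]
The decisive term is the second one, which now carries three equal leading arguments. Applying \Y1 with $k=1$---the interior pair at positions $1$ and $2$, both equal to $\xi_0$, which is admissible because the function carries index $n+1$, so \Y1 permits any $k$ with $1\le k\le n$, in particular $k=1$---collapses it to $y_{n-1}(\xi_0,\xi_2,\ldots,\xi_n)$. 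Substituting back yields
\[
  y_n^{(s)}(\xi_0,\xi_0,\xi_2,\ldots,\xi_n)=\tfrac12\bigl[y_{n-1}(\xi_0,\xi_2,\ldots,\xi_n)+y_n(\xi_0,\xi_0,\xi_2,\ldots,\xi_n)\bigr],
\]
which is exactly $y_{n-1}^{(s)}(\xi_0,\xi_2,\ldots,\xi_n)$ read off from the definition, establishing \Y{s}. The check of \Y{a} is identical up to a sign: the same application of \Y1 gives $y_n^{(a)}(\xi_0,\xi_0,\xi_2,\ldots,\xi_n)=\tfrac12[y_n(\xi_0,\xi_0,\xi_2,\ldots,\xi_n)-y_{n-1}(\xi_0,\xi_2,\ldots,\xi_n)]$, and this equals $-y_{n-1}^{(a)}(\xi_0,\xi_2,\ldots,\xi_n)$.

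The only genuine subtlety, and the step I would watch most carefully, is the index bookkeeping in the deletion. One must confirm that the collapse of $y_{n+1}(\xi_0,\xi_0,\xi_0,\xi_2,\ldots,\xi_n)$ invokes \Y1 at the admissible \emph{interior} position $k=1$, and not at the front pair $\xi_0=\xi_0$ sitting at positions $0$ and $1$, which lies outside the range $1\le k\le n$ of \Y1 and is precisely what the conditions \Y{s}, \Y{a} are being constructed to govern. The boundary case $n=1$ deserves a separate glance: there the deletion reduces $y_2(\xi_0,\xi_0,\xi_0)$ to $y_0(\xi_0)$ and matches $y_0^{(s)}(\xi_0)=\tfrac12[y_0(\xi_0)+y_1(\xi_0,\xi_0)]$. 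Finally, should one want $\{y_n^{(s)}\}$ and $\{y_n^{(a)}\}$ to be bona fide read-out functions, it remains to observe that they inherit \Y0 and \Y1 from $\{y_n\}$ by the same one-line substitutions; I expect this to be routine and would relegate it to a remark rather than the main argument.
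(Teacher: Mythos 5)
Your proof is correct and is exactly the verification the paper leaves implicit behind ``it is easy to check'': the two parts sum to $y_n$ by cancellation, and each symmetry condition \Y{s}, \Y{a} follows by setting $\xi_1=\xi_0$ and collapsing $y_{n+1}(\xi_0,\xi_0,\xi_0,\xi_2,\ldots,\xi_n)$ to $y_{n-1}(\xi_0,\xi_2,\ldots,\xi_n)$ via the Madelung deletion \Y1 at the interior pair $k=1$, which is admissible since the function carries index $n+1$. Your index bookkeeping, including the boundary case $n=1$ and the observation that the front pair at positions $0,1$ is \emph{not} covered by \Y1 (and that \Y1 for $\{y_n\}$ is the tacit hypothesis, the proposition being stated for read-out functions satisfying \Y0--\Y2), is accurate.
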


\section{Transformations of the State Variables}\label{sec:transform}
Coordinates $\xi_0, \ldots, \xi_n$ were introduced via the values
$\Delta H_0, \ldots \Delta H_n$.  However, nothing prevented from
using, for example, the magnetization changes $\Delta M_i$ instead of
$\Delta H_i$.  This observation shows that there must be a class of
coordinate transformations which preserve conditions \Y0 -- \Y2,
\Y{s}, \Y{a}, and the state transition law.

Let us describe, how the new coordinates can be introduced with any
sequence of antisymmetric functions $u_n(\xi_0,\ldots,\xi_n)$ that
satisfies conditions \Y0 -- \Y2, \Y{a}, and the following condition:
\begin{equation}\label{u-alt-monotone}
(-1)^{n+1}\frac{\partial u_n}{\partial \xi_n} \geq  \varepsilon >0,\; 
  n = 0,1, \ldots\,,
\end{equation}
which means that $u(t)$ strictly increases (decreases) when the input
$H(t)$ increases (decreases).  The transformation between the old and
new coordinates reads
\begin{equation}\label{xi->xi'}
  \xi'_0 = \varphi_0(\xi_0),\quad \xi'_1 = \varphi_1(\xi_0, \xi_1)\,,\;
  \ldots\;,\;\xi'_n = \varphi_n(\xi_0,\ldots,\xi_n),
\end{equation}
where  
\begin{align}\label{phi-def}
  &\varphi_0(\xi_0) = - 2u_0(\xi_0),\nonumber\\
  &\varphi_k(\xi_0,\ldots,\xi_k) = (-1)^{k-1}\left[u_k(\xi_0,\ldots,\xi_k) -
    u_{k-1}(\xi_0,\ldots,\xi_{k-1})\right],
  \quad k=1,\,\ldots\,,n.
\end{align}
It can be seen that the new coordinates $\xi'_0,\,\ldots\,,\xi'_n$
are defined using the differences $\Delta u_k$ in the same way as
the coordinates $\xi_0,\,\ldots\,,\xi_n$ are defined by $\Delta H_k$;
$u_n(\xi_0,\ldots,\xi_n)$ can be expressed via $\varphi_n$
as follows:
\begin{equation}\label{u-function}
  u_n(\xi_0,\dots,\xi_n) = -\frac{1}{2}\varphi_0(\xi_0) + 
  \varphi_1(\xi_0,\xi_1) - \ldots \pm\varphi_n(\xi_0,\dots,\xi_n). 
\end{equation}  

\begin{lem}\label{lem:phi-props}
  Functions $\varphi_k(\xi_0,\ldots,\xi_k)$ defined in
  (\ref{phi-def}) satisfy the following conditions: 

\begin{enumerate}[label=(\roman*)]

\item
  If $\xi_i = 0$ then $\xi'_i = 0$, $i=0,1,\ldots\,,n$;
\item \label{phi_prop_ii}
  If $\;\xi_{i+1} = \xi_i$ then $\;\xi'_{i+1} = \xi'_i$, $i=0,1,\ldots\,,n-1$;
\item
  If $\;\xi_{i+1}=\xi_i$, $i = 1,2,\ldots\,,k-2$, $k=3,4,\ldots\,,n\;$ then
\begin{equation}
  \varphi_k(\xi_0,\ldots,\xi_i,\xi_{i+1},\ldots,\xi_k) = 
  \varphi_{k-2}(\xi_0,\ldots,\xi_{i-1},\xi_{i+2}, \ldots,\xi_k);
\end{equation}
\item
  If $\;\xi_1 = \xi_0$, $k = 2,3,\ldots,n,\;$ then
  \begin{equation}\label{phi_prop_2.1}
2\frac{\partial\varphi_1}{\partial\xi_0} + \frac{\partial\varphi_1}{\partial\xi_1} = \frac{\partial\varphi_0}{\partial\xi_0}\quad\mbox{and}\quad
    2\frac{\partial\varphi_k}{\partial\xi_0} +
    \frac{\partial\varphi_k}{\partial\xi_1} = 0;
  \end{equation}
\item 
  If $\;\xi_1=\xi_0,\; k =  1,2,\ldots,n\;$ then
  \begin{equation} \label{phi_prop_a.1}
  \varphi_k(\xi_0,\xi_1,\ldots,\xi_k) =
    \varphi_{k-1}(\xi_0,\xi_2,\ldots,\xi_k).    
  \end{equation}
\end{enumerate}
\end{lem}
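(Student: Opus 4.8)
The plan is to obtain each of (i)--(v) directly from the corresponding condition on the generating functions $u_n$, by inserting the definition (\ref{phi-def}) and simplifying. The guiding observation is that, for $k\geq 1$, $\varphi_k$ is the difference $u_k-u_{k-1}$ up to the alternating sign $(-1)^{k-1}$; hence whenever one of the conditions \Y0, \Y1, \Y2, \Y{a} can be applied \emph{simultaneously} to $u_k$ and to $u_{k-1}$, it transfers to $\varphi_k$, the only subtlety being that the sign $(-1)^{k-1}$ produced on the right must be matched against the sign $(-1)^{k-3}$ or $(-1)^{k-2}$ carried by the lower-order $\varphi$, and these agree modulo $2$.

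First I would dispose of (i) and (ii), each of which needs the two boundary cases $i=0$ and $i\geq 1$ handled separately. For (i) with $i=0$, the identity $u_0(0)=0$, valid for antisymmetric functions (as observed in the discussion of \Y{a}), gives $\varphi_0(0)=-2u_0(0)=0$; for $i\geq 1$, condition \Y0 makes the bracket $u_i-u_{i-1}$ vanish. For (ii) with $i=0$, antisymmetry \Y{a} yields $u_1(\xi_0,\xi_0)=-u_0(\xi_0)$, so $\varphi_1(\xi_0,\xi_0)=-2u_0(\xi_0)=\varphi_0(\xi_0)$; for $i\geq 1$, the Madelung deletion \Y1 collapses $u_{i+1}(\ldots,\xi_i,\xi_i)$ to $u_{i-1}$, and after collecting signs one lands exactly on $\varphi_i$.

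Properties (iii), (iv), (v) then follow the same template applied to both $u_k$ and $u_{k-1}$. For (iii) I would apply \Y1 to each of $u_k$ and $u_{k-1}$; this is legitimate precisely because the repeated pair sits at an interior index with $i\leq k-2$, so it is a non-terminal pair for $u_{k-1}$ as well (the application to $u_{k-1}$ requires $i\leq (k-1)-1$, which is exactly $i\leq k-2$, together with $k\geq 3$), and the two deletions produce $\varphi_{k-2}$. For (iv) I would differentiate the definition of $\varphi_k$ and invoke \Y2: the second identity uses \Y2 for both $u_k$ and $u_{k-1}$ (hence $k\geq 2$), while the first additionally uses $\partial\varphi_0/\partial\xi_0=-2\,du_0/d\xi_0$. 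For (v) I would apply \Y{a} to $u_k$ and to $u_{k-1}$ (the case $k=1$ reduces to (ii) with $i=0$) and collect the resulting sign $(-1)^{k}=(-1)^{k-2}$.

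The computations are mechanical, so the main obstacle is purely bookkeeping: keeping the alternating prefactor $(-1)^{k-1}$ aligned with the index shift $k\mapsto k-2$ in (iii) and $k\mapsto k-1$ in (v), and checking in each case that the admissible ranges of $k$ and $i$ let the invoked $u$-condition act on \emph{both} $u_k$ and $u_{k-1}$. Once those ranges are verified, every identity reduces to a one-line substitution.
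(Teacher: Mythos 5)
Your proof is correct and takes essentially the same route as the paper's, whose entire argument is the one-line remark that item (i) follows from \Y0, items (ii)--(iii) from \Y1, item (iv) from \Y2, and item (v) from the symmetry condition; you simply supply the sign and index-range bookkeeping (applying each $u$-condition to both $u_k$ and $u_{k-1}$ and matching $(-1)^{k-1}$ against $(-1)^{k-3}$ or $(-1)^{k-2}$) that the paper leaves implicit. One detail where you improve on the paper: its proof cites \Y{s} for item (v), but since the $u_n$ are assumed antisymmetric it is \Y{a} that gives $\varphi_k=\varphi_{k-1}$ (with \Y{s} one would instead get $\varphi_k=-\varphi_{k-1}$, and item (ii) at $i=0$ would fail), exactly as you argue.
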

\begin{proof}
  The lemma is easy to prove using conditions \Y0 -- \Y2, \Y{s}
  imposed on $u_n(\xi_0,\ldots,\xi_n)$. Item (i) follows from \Y0,
  items (ii) and (iii) from \Y1, item (iv) from \Y2, and item
  (v) from \Y{s}.
\end{proof}

The coordinate transformation (\ref{xi->xi'}) can be inverted according to the following lemma. 

\begin{lem}\label{lem:xi<->xi'}
  Coordinate transformation (\ref{xi->xi'}) is a bijection $D_n(\xi_M)
  \rightarrow D_n(\xi'_M)$, where $\xi'_M = \varphi_0(\xi_M)$.  The
  inverse transformation reads
  \begin{equation}\label{xi'->xi}
    \xi_0 = \tilde{\varphi}'_0(\xi'_0),\quad \xi_1 = 
    \tilde{\varphi}'_1(\xi'_0, \xi'_1)\,,\;
    \ldots\;,\;\xi_n = \tilde{\varphi}'_n(\xi'_0,\ldots,\xi'_n),
  \end{equation}
  and
  \begin{equation}\label{x_k<->xi'_k}
  \xi'_i = 0\quad\mbox{if and only if}\quad \xi_i = 0,\;\quad 
  \xi'_i = \xi'_{i+1}\quad\mbox{if and only if}\quad \xi_i = \xi_{i+1}.
  \end{equation}
\end{lem}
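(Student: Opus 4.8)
The plan is to exploit the lower-triangular structure of (\ref{xi->xi'}): since $\xi'_k=\varphi_k(\xi_0,\ldots,\xi_k)$ involves only $\xi_0,\ldots,\xi_k$, I can recover the $\xi_i$ one at a time, solving for $\xi_k$ once $\xi_0,\ldots,\xi_{k-1}$ are already known. The two ingredients that make each scalar inversion work are the strict monotonicity supplied by (\ref{u-alt-monotone}) and the endpoint identities already recorded in Lemma~\ref{lem:phi-props}.

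First I would fix $\xi_0,\ldots,\xi_{k-1}$ (equivalently $\xi'_0,\ldots,\xi'_{k-1}$) and study the map $\xi_k\mapsto\varphi_k(\xi_0,\ldots,\xi_{k-1},\xi_k)$ on the interval $[0,\xi_{k-1}]$. Differentiating (\ref{phi-def}) gives $\partial\varphi_k/\partial\xi_k=(-1)^{k-1}\partial u_k/\partial\xi_k$, and since $(-1)^{k+1}=(-1)^{k-1}$, condition (\ref{u-alt-monotone}) yields $\partial\varphi_k/\partial\xi_k\geq\varepsilon>0$; for $k=0$ the same computation gives $\partial\varphi_0/\partial\xi_0=-2\,\partial u_0/\partial\xi_0\geq 2\varepsilon>0$. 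Thus each $\varphi_k$ is continuous (being built from the differentiable $u_k,u_{k-1}$) and strictly increasing in its last argument. By Lemma~\ref{lem:phi-props}(i) its value at the left endpoint $\xi_k=0$ is $0$, and by Lemma~\ref{lem:phi-props}(ii) its value at the right endpoint $\xi_k=\xi_{k-1}$ is $\xi'_{k-1}$; here I adopt the convention $\xi_{-1}=\xi_M$, $\xi'_{-1}=\xi'_M$, using $u_0(0)=0$ for antisymmetric $u_0$ so that $\varphi_0(0)=0$ and $\varphi_0(\xi_M)=\xi'_M$. Hence $\varphi_k(\xi_0,\ldots,\xi_{k-1},\cdot)$ is a strictly increasing homeomorphism $[0,\xi_{k-1}]\to[0,\xi'_{k-1}]$, and I define $\tilde\varphi'_k$ as its inverse.

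I would then run the induction on $k$. Given $(\xi'_0,\ldots,\xi'_n)\in D_n(\xi'_M)$, i.e.\ $\xi'_M\geq\xi'_0\geq\cdots\geq\xi'_n\geq 0$, set $\xi_0=\varphi_0^{-1}(\xi'_0)\in[0,\xi_M]$; assuming $\xi_0,\ldots,\xi_{k-1}$ have been determined with $\xi_M\geq\xi_0\geq\cdots\geq\xi_{k-1}\geq 0$, the constraint $0\leq\xi'_k\leq\xi'_{k-1}$ places $\xi'_k$ in the range of the homeomorphism above, so $\xi_k=\tilde\varphi'_k(\xi'_0,\ldots,\xi'_k)$ is the unique preimage in $[0,\xi_{k-1}]$, preserving $\xi_{k-1}\geq\xi_k\geq 0$. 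Reading the same endpoint identities in the forward direction shows that the map of (\ref{xi->xi'}) sends $D_n(\xi_M)$ into $D_n(\xi'_M)$ (the nested endpoints force the inequalities to transport), so the construction simultaneously proves injectivity, surjectivity, and the triangular form (\ref{xi'->xi}) of the inverse. The equivalences (\ref{x_k<->xi'_k}) then drop out of the same picture: at level $i$ the strictly increasing bijection $[0,\xi_{i-1}]\to[0,\xi'_{i-1}]$ attains $\xi'_i=0$ only at the left endpoint $\xi_i=0$, while at level $i+1$ the bijection $[0,\xi_i]\to[0,\xi'_i]$ attains $\xi'_{i+1}=\xi'_i$ only at the right endpoint $\xi_{i+1}=\xi_i$; the converse implications are precisely Lemma~\ref{lem:phi-props}(i),(ii).

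The argument is essentially routine once the triangular-plus-monotone picture is in place; the one step that genuinely needs care is verifying that the nested intervals match up \emph{exactly} -- that the right endpoint of the $k$-th interval is $\xi'_{k-1}$ and nothing else -- so that the image of $D_n(\xi_M)$ is precisely $D_n(\xi'_M)$ rather than a proper subset or superset. This is exactly where Lemma~\ref{lem:phi-props}(ii), resting on \Y1 and \Y{a}, is indispensable: without the identity $\varphi_k(\ldots,\xi_{k-1},\xi_{k-1})=\xi'_{k-1}$ the domains would fail to nest and surjectivity onto $D_n(\xi'_M)$ would break down.
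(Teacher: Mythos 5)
Your proposal is correct and follows essentially the same route as the paper's proof: strict monotonicity of each $\varphi_k$ in its last argument from (\ref{u-alt-monotone}) and (\ref{phi-def}), the endpoint identities of Lemma~\ref{lem:phi-props}(i),(ii) making $\varphi_k(\xi_0,\ldots,\xi_{k-1},\cdot)$ a bijection $[0,\xi_{k-1}]\rightarrow[0,\xi'_{k-1}]$, and a one-by-one (triangular) inversion yielding (\ref{xi'->xi}) and (\ref{x_k<->xi'_k}). The paper states this in compressed form; your write-up merely makes the induction, the nesting of intervals, and the $k=0$ case (where antisymmetry gives $u_0(0)=0$) explicit.
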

\begin{proof} 
  Due to (\ref{u-alt-monotone}), (\ref{phi-def}), for $k =
  0,1\,\ldots\,,n$ it holds $\partial{\varphi_k}/{\partial\xi_k} \geq
  \varepsilon > 0$. Obviously, $\varphi_0$ is a bijection
  $[0,\xi_M]\rightarrow [0,\varphi_0(\xi_M)]$.  According to
  Lemma~\ref{lem:phi-props}, items (i),(ii), $\varphi_k$, as a
  function of the last argument $\xi_k$, is a bijection $[0,
  \xi_{k-1}]\rightarrow [0, \xi'_{k-1}]$ for $k = 1,
  2,\,\ldots\,$. Solving equations (\ref{xi->xi'}) one-by-one gives
  (\ref{xi'->xi}) and (\ref{x_k<->xi'_k}).
\end{proof}

Here and below we mark functions expressed in the new coordinates
$\xi'_0,\ldots,\xi'_n$ by the prime symbol ($'$). In this notations
using (\ref{xi->xi'}), (\ref{xi'->xi}), we can write
\begin{equation}\label{y<->y'}
  y_n(\xi_0, \ldots, \xi_n) = 
  y_n(\tilde{\varphi}'_0(\xi'_0),\ldots,\tilde{\varphi}'_n(\xi'_0,\ldots,\xi'_n)) = 
  y'_n(\xi'_0,\ldots,\xi'_n) = y'_n(\varphi_0(\xi_0),\ldots,\varphi_n(\xi_0,\ldots,\xi_n)).
\end{equation}

\begin{prop}\label{prop:Y-invariance}
  A sequence of functions $y_n(\xi_0,\ldots,\xi_n)$, $n = 0,1,\ldots$,
  that satisfies conditions \Y0 -- \Y2, and, possibly, one of the
  symmetry condition \Y{s} or \Y{a}, satisfies the same conditions in
  the new coordinates (\ref{xi->xi'}).
\end{prop}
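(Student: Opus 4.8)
The plan is to verify each condition separately, exploiting two features of the coordinate change (\ref{xi->xi'}): it is \emph{triangular}, since $\xi'_i$ depends only on $\xi_0,\ldots,\xi_i$, and by Lemma~\ref{lem:xi<->xi'} it preserves the trigger conditions of conditions \Y0--\Y{a}. Indeed, relation (\ref{x_k<->xi'_k}) gives $\xi'_i=0\Leftrightarrow\xi_i=0$ and $\xi'_i=\xi'_{i+1}\Leftrightarrow\xi_i=\xi_{i+1}$, so every hypothesis of the form ``$\xi'_n=0$'', ``$\xi'_k=\xi'_{k+1}$'', or ``$\xi'_1=\xi'_0$'' is equivalent to the same statement with the primes removed. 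I may therefore pass freely between the two coordinate systems when testing where a condition must hold, using $y'_n(\xi'_0,\ldots,\xi'_n)=y_n(\xi_0,\ldots,\xi_n)$ from (\ref{y<->y'}) throughout.

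For the purely algebraic conditions \Y0, \Y1, \Y{s}, \Y{a} the recipe is uniform: rewrite the left side of the primed condition as $y_n$ in the old coordinates, invoke the corresponding condition for $y_n$, and recognize the resulting reduced function as the primed function evaluated at the correctly transformed reduced sequence. For \Y0, when $\xi'_n=0$ we have $\xi_n=0$, so $y'_n=y_n=y_{n-1}(\xi_0,\ldots,\xi_{n-1})=y'_{n-1}(\xi'_0,\ldots,\xi'_{n-1})$, the last step using triangularity. For \Y1, when $\xi'_k=\xi'_{k+1}$ we have $\xi_k=\xi_{k+1}$; \Y1 for $y_n$ yields $y_{n-2}$ at the Madelung-deleted sequence $(\xi_0,\ldots,\xi_{k-1},\xi_{k+2},\ldots,\xi_n)$, and Lemma~\ref{lem:phi-props}(iii) guarantees that its new coordinates are exactly $(\xi'_0,\ldots,\xi'_{k-1},\xi'_{k+2},\ldots,\xi'_n)$. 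For \Y{s} (resp.\ \Y{a}), when $\xi'_1=\xi'_0$ we have $\xi_1=\xi_0$, and Lemma~\ref{lem:phi-props}(v) shows that the new coordinates of $(\xi_0,\xi_2,\ldots,\xi_n)$ are $(\xi'_0,\xi'_2,\ldots,\xi'_n)$, so the right side becomes $\pm y'_{n-1}(\xi'_0,\xi'_2,\ldots,\xi'_n)$ with the appropriate sign.

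The main obstacle is the differential condition \Y2, which is not preserved termwise but must be reassembled through the chain rule. Differentiating $y_n=y'_n\circ\varphi$ and collecting, I obtain the identity
\begin{equation}\nonumber
  2\frac{\partial y_n}{\partial\xi_0}+\frac{\partial y_n}{\partial\xi_1}
  =\sum_{j=0}^{n}\frac{\partial y'_n}{\partial\xi'_j}
  \left(2\frac{\partial\varphi_j}{\partial\xi_0}+\frac{\partial\varphi_j}{\partial\xi_1}\right),
\end{equation}
valid everywhere, the $j=0$ summand carrying $\partial\varphi_0/\partial\xi_1=0$. Restricting to the locus $\xi_0=\xi_1$, Lemma~\ref{lem:phi-props}(iv) collapses the bracketed factors: the $j=0$ term contributes $2\,\partial\varphi_0/\partial\xi_0$, the $j=1$ term contributes $\partial\varphi_0/\partial\xi_0$, and every term with $j\geq2$ vanishes, so the sum factors as $(\partial\varphi_0/\partial\xi_0)\bigl(2\,\partial y'_n/\partial\xi'_0+\partial y'_n/\partial\xi'_1\bigr)$. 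The left side vanishes by \Y2 for $y_n$, while $\partial\varphi_0/\partial\xi_0=-2\,\partial u_0/\partial\xi_0\geq2\varepsilon>0$ by (\ref{phi-def}) and (\ref{u-alt-monotone}); cancelling this nonzero factor gives $2\,\partial y'_n/\partial\xi'_0+\partial y'_n/\partial\xi'_1=0$ whenever $\xi'_0=\xi'_1$, which is \Y2 for $y'_n$. I expect the only delicate points to be this cancellation and the bookkeeping in items (iii) and (v) that guarantees the reduced primed sequences coincide.
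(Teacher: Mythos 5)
Your proposal is correct and follows essentially the same route as the paper's own proof: translate the trigger conditions via Lemma~\ref{lem:xi<->xi'}, handle \Y0, \Y1, \Y{s}, \Y{a} algebraically through items (iii) and (v) of Lemma~\ref{lem:phi-props}, and for \Y2 apply the chain rule, collapse the sum with item (iv), and cancel the nonzero factor $\partial\varphi_0/\partial\xi_0$. Your only addition is the explicit lower bound $\partial\varphi_0/\partial\xi_0 \geq 2\varepsilon > 0$ from (\ref{u-alt-monotone}), which the paper leaves implicit.
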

\begin{proof}
  For any function $y_n$ and $y'_n$ expressed in the old and in the
  new coordinates there holds
  \begin{equation}\label{y<->y'*}
    y_n(\xi_0, \ldots, \xi_n) =  y'_n(\xi'_0,\ldots,\xi'_n) = 
    y'_n(\varphi_0(\xi_0),\ldots,\varphi_n(\xi_0,\ldots,\xi_n)).
  \end{equation}

  Let $\xi'_n = 0$. Then, according to Lemma \ref{lem:xi<->xi'},
  $\xi_n = 0$ and $y_n(\xi_0, \ldots, \xi_{n-1}, \xi_n) =
  y_{n-1}(\xi_0,\ldots,\xi_{n-1})$ due to \Y0.  Because
  $y'_{n-1}(\xi'_0,\ldots,\xi'_{n-1}) =
  y_{n-1}(\xi_0,\ldots,\xi_{n-1})$ we have $ y'_n(\xi'_0, \ldots,
  \xi'_{n-1}, 0) = y'_{n-1}(\xi'_0,\ldots,\xi'_{n-1})$. Thus, \Y0 holds
  in the new coordinates.

  \smallskip Let $\xi'_{i+1} = \xi'_i$, for some $1 \leq i \leq n-1$.
  Then, according to Lemma \ref{lem:xi<->xi'}, $\xi_{i+1} = \xi_i$, and
  $y_n(\xi_0,\ldots,\xi_i,\xi_{i+1},\ldots,\xi_n) =
  y_{n-2}(\xi_0,\ldots,\xi_{i-1},\xi_{i+2},\ldots,\xi_{n-1})$ due to
  \Y1.  As can be seen from item (iii) of Lemma \ref{lem:phi-props},
  if $\xi'_{i+1} = \xi'_i$ then $\xi'_{i+2},\ldots,\xi'_n$ are the same as
  in the function $y'_{n-2}(\xi'_0,\ldots,\xi'_{i-1},\xi'_{i+2},
  \ldots,\xi'_{n-1}) = y_{n-2}(\xi_0,\ldots,\xi_{i-1},\xi_{i+2},
  \ldots,\xi_{n-1})$. Thus, \Y1 holds in the new coordinates.

  \smallskip
  Differentiating (\ref{y<->y'*}) by $\xi_1$, $\xi_0$ and
  combining the terms in the result gives
  \begin{equation}\nonumber
    2\frac{\partial y_n}{\partial\xi_0} + \frac{\partial y_n}{\partial\xi_1} = 
    2\frac{\partial y'_n}{\partial\xi'_0}\cdot\frac{\partial\varphi_0}{\partial\xi_0}+
    \sum^n_{k = 1}\frac{\partial y'_n}{\partial\xi'_k}\cdot
    \left( 2\frac{\partial\varphi_k}{\partial\xi_0}+
    \frac{\partial\varphi_k}{\partial\xi_1}\right).
  \end{equation}
  Let $\xi'_1 = \xi'_0$. Then $\xi_1 = \xi_0$ according to Lemma
  \ref{lem:xi<->xi'}, and the left side is zero due to \Y2.  Taking
  into account item (iv) of Lemma \ref{lem:phi-props}, from the above
  equation follows
  \begin{equation}\nonumber
    \frac{\partial\varphi_0}{\partial\xi_0}\cdot\left(
      2\frac{\partial y'_n}{\partial\xi'_0} + 
      \frac{\partial y'_n}{\partial\xi'_1}\right) = 0,
  \end{equation}
  Because $\partial\varphi_0/\partial\xi_0 \neq 0$, we can conclude
  that \Y2 holds in the new coordinates.

  In the similar way, using item (v) of Lemma \ref{lem:phi-props}, it
  can be found that if one of conditions \Y{s} or \Y{a} holds for
  $y_n(\xi_0,\ldots,\xi_n)$ then, respectively, $y'_n(\xi'_0,\xi'_1,\ldots,\xi'_n) =
  y'_{n-1}(\xi'_0,\xi'_2, \ldots,\xi'_n)$ or
  $y'_n(\xi'_0,\xi'_1,\ldots,\xi'_n) = -y'_{n-1}(\xi'_0,\xi'_2,
  \ldots,\xi'_n)$.
\end{proof}

\begin{prop}\label{prop:u-state-trans} Let the functions
  $u_n(\xi_0,\ldots,\xi_n)$ determine the coordinate transformation
  (\ref{xi->xi'}) according to (\ref{phi-def}).  Then the state
  transition law in the new coordinates for $\delta u$ reads exactly
  the same as it reads in the old coordinates for $\delta H$.
\end{prop}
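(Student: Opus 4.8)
The plan is to recognize that the new coordinates $\xi'_0,\ldots,\xi'_n$ stand to the antisymmetric output $u$ exactly as the old coordinates $\xi_0,\ldots,\xi_n$ stand to the field $H$. Comparing (\ref{phi-def}) and (\ref{u-function}) with (\ref{xi-def}) and (\ref{H(xi)}), the quantities $\xi'_k$ are precisely the alternating $u$-increments, built from $u$ by the very recipe that produced $\xi_k$ from $H$. Since the rules of Table~\ref{tab:state-trans} were derived solely from that recipe together with the monotonicity of the driving variable along each branch, one expects them to hold verbatim with $H,\xi,\delta H$ replaced by $u,\xi',\delta u$. Making this precise amounts to checking, row by row, that the change in the $\xi'$ coordinates induced by a physical transition equals the change prescribed by the table when $\delta H$ is replaced by $\delta u$.

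First I would establish that $\delta u$ and $\delta H$ always have the same sign along every branch. On the branch of a state $(\xi_0,\ldots,\xi_n)$ the field $H$ increases with $\xi_n$ for odd $n$ and decreases with $\xi_n$ for even $n$ (Table~\ref{tab:state-trans}); condition (\ref{u-alt-monotone}) gives $(-1)^{n+1}\partial u_n/\partial\xi_n\ge\varepsilon>0$, so $u$ moves in the same direction as $H$. Hence $\delta u$ and $\delta H$ share their sign, and $|\delta u|$ plays the role of $|\delta H|$.

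Next I would treat the two generic rows ($n\ge1$). For a branch-continuation move (odd $n$ with $\delta H>0$, or even $n$ with $\delta H<0$) only $\xi_n$ changes, so $u_{n-1}$ in (\ref{phi-def}) is untouched and $\delta\xi'_n=(-1)^{n-1}\delta u_n=(-1)^{n-1}\delta u$; the parity factor is $+1$ for odd $n$ and $-1$ for even $n$, which reproduces exactly the table entries $\xi'_n+\delta u$ and $\xi'_n+|\delta u|$. For a reversal move a new coordinate is appended starting from $0$; using \Y0 in the form $u_n(\ldots,\xi_n)=u_{n+1}(\ldots,\xi_n,0)$, the appended value is $\xi'_{n+1}=\varphi_{n+1}=(-1)^{n}[u_{n+1}(\ldots,\xi_{n+1})-u_{n+1}(\ldots,0)]=(-1)^n\delta u=|\delta u|$, matching the table. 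I would also invoke Lemma~\ref{lem:phi-props}(iii) together with Lemma~\ref{lem:xi<->xi'} to see that a Madelung deletion $\xi_n=\xi_{n-1}$ occurs if and only if $\xi'_n=\xi'_{n-1}$, so Note~(ii) of the table is preserved.

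The main obstacle, and the last step, is the two special rows $(0)$ and $(\xi_0,\xi_0)$, where the factor $2$ in $\xi_0=2\Delta H_0$ and $\varphi_0=-2u_0$ interacts with the antisymmetry. Here I would use \Y{a}, which gives $u_1(\xi_0,\xi_0)=-u_0(\xi_0)$, so that along the initial magnetization curve the output measured from the demagnetized value is $\delta u=-u_0(\xi_0)$; then $\xi'_0=\varphi_0(\xi_0)=-2u_0(\xi_0)=2\delta u$, while item~(ii) of Lemma~\ref{lem:phi-props} forces the companion coordinate to coincide, giving the state $(2\delta u,2\delta u)$ exactly as the table demands (and $(2|\delta u|)$ for the descending move). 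The transition out of $(\xi_0,\xi_0)$ is handled the same way, differencing $-2u_0$ to obtain $\xi'_0+2\delta u$. Since $u_0(0)=0$ by antisymmetry, the demagnetized state $(0)$ maps to $(0)$, closing the verification; the identity transformation $u=H$, for which $\varphi_k=\xi_k$, serves as a consistency check throughout.
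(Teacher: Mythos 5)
Your proof is correct and takes essentially the same route as the paper's: observe that a transition changes only the last coordinate (or appends a new one), use the structure of (\ref{phi-def})--(\ref{u-function}) to identify $\delta\xi'_n$ with $|\delta u|$, and treat the state $(\xi_0,\xi_0)$ with $\delta H>0$ as a separate exceptional case. You merely supply details the paper leaves implicit or glosses --- deriving the sign agreement of $\delta u$ and $\delta H$ from (\ref{u-alt-monotone}), verifying the reversal row via \Y0, checking Note (ii) of Table~\ref{tab:state-trans} via (\ref{x_k<->xi'_k}), and getting the correct factor $2\delta u$ in the exceptional row (where the paper's proof writes $|\delta u|$, an apparent slip relative to its own table) --- so the underlying argument is the same.
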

\begin{proof}
  Let us note that the signs of $\delta u$ and $\delta H$ are always
  the same. As we can see from Table \ref{tab:state-trans}, the only
  variable that changes when $H$ changes is the last variable
  $\xi_n$. This is true with one exception, which will be considered
  separately. Due to (\ref{xi->xi'}), only $\xi'_n$ depends on
  $\xi_n$, and, according to (\ref{u-function}), $|\delta u| = \delta
  \xi'_n$.  Hence, if the state is $(\xi_0,\ldots,\xi_n)$, $n$ even and
  $\delta H < 0$, or $n$ odd and $\delta H > 0$, the new state in old
  coordinates is $(\xi_0,\ldots,\xi_n + |\delta H|)$, and in the new
  coordinates it is $(\xi'_0,\ldots,\xi'_n + |\delta u|)$.  If the
  state is $(\xi_0,\ldots,\xi_n)$, $n$ even and $\delta H > 0$, or $n$
  odd and $\delta H < 0$, the new state in old coordinates is
  $(\xi_0,\ldots,\xi_n,|\delta H|)$, and in the new coordinates it is
  $(\xi'_0,\ldots,\xi'_n, |\delta u|)$.

  The exception is $(\xi_0,\xi_0)$, $\delta H > 0$.  When the state is
  $(\xi_0,\xi_0)$ and $\delta H > 0$, the new state in the old
  coordinates is $(\xi_0 + |\delta H|,\xi_0 + |\delta H|)$. It is easy
  to check, that in the new coordinates it is $(\xi'_0 + |\delta u|,
  \xi'_0 + |\delta u|)$.
\end{proof}

According to (\ref{u-function}), (\ref{H(xi)}) and (\ref{y<->y'}), in
the new coordinates we have
\begin{align}\label{u'(xi')}
u'_n(\xi'_0,\ldots,\xi'_n) &= 
-\frac{1}{2}\xi'_0 + \xi'_1 - \ldots \pm\xi'_n,\\
H'_n(\xi'_0,\ldots,\xi'_n) &= 
-\frac{1}{2}\tilde{\varphi}'_0(\xi'_0) + \tilde{\varphi}'_1(\xi'_0,\xi'_1) - 
\ldots \pm\tilde{\varphi}'_n(\xi'_0,\ldots,\xi'_n),
\end{align}
As can be seen from (\ref{H(xi)}), $H_n(\xi_0,\ldots,\xi_n)$ satisfy
conditions \Y0 -- \Y2 and \Y{a} in coordinates
$\xi_0,\,\ldots\,,\xi_n$.  Thus, according to Proposition
\ref{prop:Y-invariance}, $H'_n(\xi'_0,\ldots,\xi'_n)$ must satisfy \Y0
-- \Y2 and \Y{a} in coordinates $\xi'_0,\,\ldots\,,\xi'_n$.  Due to
Proposition \ref{prop:u-state-trans}, the variable $u$ associated with
functions $u_n(\xi_0,\ldots,\xi_n)$ determines the state transition
law according to Table \ref{tab:state-trans}, assuming that $\delta H$
is replaced with $\delta u$, and the old coordinates are replaced with
the new ones.

\section{Algebraic Properties of Read-Out Functions}
\label{sec:yn-algebra}

Termwise operations can be performed on sequences of functions that
satisfy conditions \Y0~--~\Y2, \Y{s}, \Y{a}.  Let $f_n(\xi_0,
\dots, \xi_n)$, $g_n(\xi_0, \dots, \xi_n)$, $n = 0,1,\,\ldots$ satisfy
conditions \Y0~--~\Y2. The functions listed in the Table
\ref{table:y-alg} also satisfy \Y0~--~\Y2, and have the symmetry
as stated therein.  In the table below constants $\alpha, \beta,
\lambda$ are real numbers, $\lambda > 0$, $d\varphi(\xi)/d\xi > 0$,
$\varphi(0) = 0$, and $F$ -- any differentiable function.

\begin{table}[H]
  \renewcommand*{\arraystretch}{1.6}
  \caption{Invariance of \Y0 - \Y2 with respect to algebraic operations}
    \begin{center}
      \begin{tabular}{|c|p{10cm}|}
        \hline
        {\bf Function} & {\bf Symmetry}\\
        \hline
        $\alpha f_n + \beta g_n$ & If $u_n$ and $v_n$ have the same symmetry, the symmetry of the result is also the same\\
        \hline
        $f_n \cdot g_n$ & If $f_n$, $g_n$ have the same symmetry, the result is symmetric. If $f_n$, $g_n$ have
        the opposite symmetry, the result is antisymmetric\\
        \hline
        $f_n(\lambda \xi_0, \lambda\xi_1, \ldots, \lambda\xi_n)$ where $\lambda > 0$ & The result has the symmetry of $f_n$\\
        \hline
        $f_n(\varphi(\xi_0), \varphi(\xi_1), \ldots, \varphi(\xi_n))$ & The result has the symmetry of $f_n$\\
        \hline
        $F(f_n(\xi_0, \xi_1, \ldots, \xi_n))$ & If $f_n$ is symmetric, the result is symmetric. If $f_n$ is antisymmetric, the result is symmetric for even $F$
        and antisymmetric for odd $F$\\
        \hline
     \end{tabular}
     \label{table:y-alg}
  \end{center}
\end{table}

Let us consider in brief some examples as the illustration of the above.

\medskip

{\em Rayleigh Law}  \cite{Chikazumi1997, Bertotti1998}. Functions
\begin{equation}\label{H(xi)-phi}
H(\xi_0,\ldots,\xi_n) = -\frac{1}{2}\xi_0 + \xi_1 - \ldots \pm\xi_n
\end{equation}
satisfy \Y0 -- \Y2 and are antisymmetric.
According to the Table \ref{table:y-alg}, functions
\begin{equation}\label{M(xi)-phi}
M(\xi_0,\ldots,\xi_n) = -\frac{1}{2}\varphi(\xi_0)+\varphi(\xi_1)-
\ldots \pm\varphi(\xi_n)
\end{equation}
satisfy \Y0 -- \Y2 and are antisymmetric. This pair of functions
describe congruent multiple order reversal curves in the $H$-$M$
plane. 
Any hysteresis branch other than initial magnetization curve can be
described by equation  
\begin{equation}
\Delta M = \varphi(\Delta H),
\end{equation}
were $\Delta H$ and $\Delta M$ are absolute values of changes the field
and the magnetization relative to the reversal point. It can be seen
that for proper orientation of hysteresis loop must be
$\partial^2\varphi(\xi)/\partial \xi^2 > 0$.  The initial
magnetization curve is
\begin{equation}\label{M(H)demag-phi}
M = \pm\frac{1}{2}\varphi(\pm 2H),
\end{equation}
where ``$+$'' corresponds to ascending and ``$-$'' to descending
branches. Ascending and descending branches of symmetric hystersis cycles
can be expressed as follows:
\begin{equation}\label{M(H)sym-phi}
M \pm M_m = \pm\varphi(H_m\pm H),\quad M_m = \frac{1}{2}\varphi(2H_m),
\end{equation}
where $H_m$, $M_m$ denote the field and the magnetization in the upper
vertex of the symmetric cycle.  Letting $\varphi(\xi)= a\xi +
b\xi^2/2$ gives the well-known Rayleigh Law.

\medskip

{\em Inverse hysteresis.} In the usual case, the differential
susceptibility is positive on the hysteresis branches, so that
$M(\xi_0, \ldots ,\xi_n)$ complies with (\ref{u-alt-monotone}), and
new coordinates $\xi_0', \ldots, \xi_n'$ determined by $M(\xi_0,
\ldots ,\xi_n)$ can be introduced. As follows from Proposition
\ref{prop:u-state-trans}, the new coordinates describe the state
transitions with respect to $\delta M$ in the same way as the old ones
with respect to $\delta H$.  According to (\ref{u'(xi')}) in the new
coordinates we have
\begin{equation}\nonumber
M(\xi'_0,\ldots,\xi'_n) = -\frac{1}{2}\xi'_0 + \xi'_1 - \ldots \pm\xi'_n ,
\end{equation}
while $H(\xi'_0, \ldots, \xi'_n)$ must satisfy \Y0 -- \Y2.  One of the
possible approximations for $H(\xi'_0, \ldots, \xi'_n)$ has been
proposed in \cite{Lang2001}:
\begin{equation}\nonumber
H(\xi'_0,\ldots,\xi'_n) = a(M)\cdot\left(-\frac{1}{2}\varphi(\xi'_0) + 
\varphi(\xi'_1) - \ldots \pm \varphi(\xi'_n)\right) + b(M),
\end{equation}
where $a(M) = a\left(-\frac{1}{2}\xi'_0 + \xi'_1 - \ldots
  \pm\xi'_n\right)$ and $b(M) = b\left(-\frac{1}{2}\xi'_0 + \xi'_1 -
  \ldots \pm\xi'_n\right)$ must be even and odd functions of $M$
respectively. In this case $H(\xi'_0,\ldots,\xi'_n)$ is antisymmetric,
as it must be.

\section{Relation to the Preisach model} 

The Preisach model \cite{Preisach1935, Mayergoyz2003, DellaTorre1999,
  Bertotti1998}, see also \cite{Gorbet&all1999}, has several variants
and is considered as the most powerful hysteresis model. We examine
here only the simplest case that is usually called classical Preisah
model.

The output of the model
can be expressed via the integrals of the Preisach distribution
function over the triangles (see Fig.~\ref{fig:PM}), and, as a
function of the reduced memory sequence $H_0,\,\ldots\,,H_k$ with $H_0
< 0$, reads as follows:
\begin{equation}\label{PM-out1}
y_n(H_0, \ldots,\, H_n) - y_0(0) = - E(H_0) + E(H_0, H_1) - 
E(H_1,H_2) + \ldots \pm E(H_{n-1}, H_n),
\end{equation}
where $y_0(0)$ is the output in the demagnetized state, i.e., the
constant value, and $E(\alpha,\beta)$ is the symmetric Everett
function, $E(\alpha,\beta) = E(\beta,\alpha)$, which represents the
multiplied by 2 integral of the Preisach distribution over the
triangle $T(\alpha,\beta)$.

\begin{figure}[H]
  \begin{center}
    \includegraphics[scale=0.5]{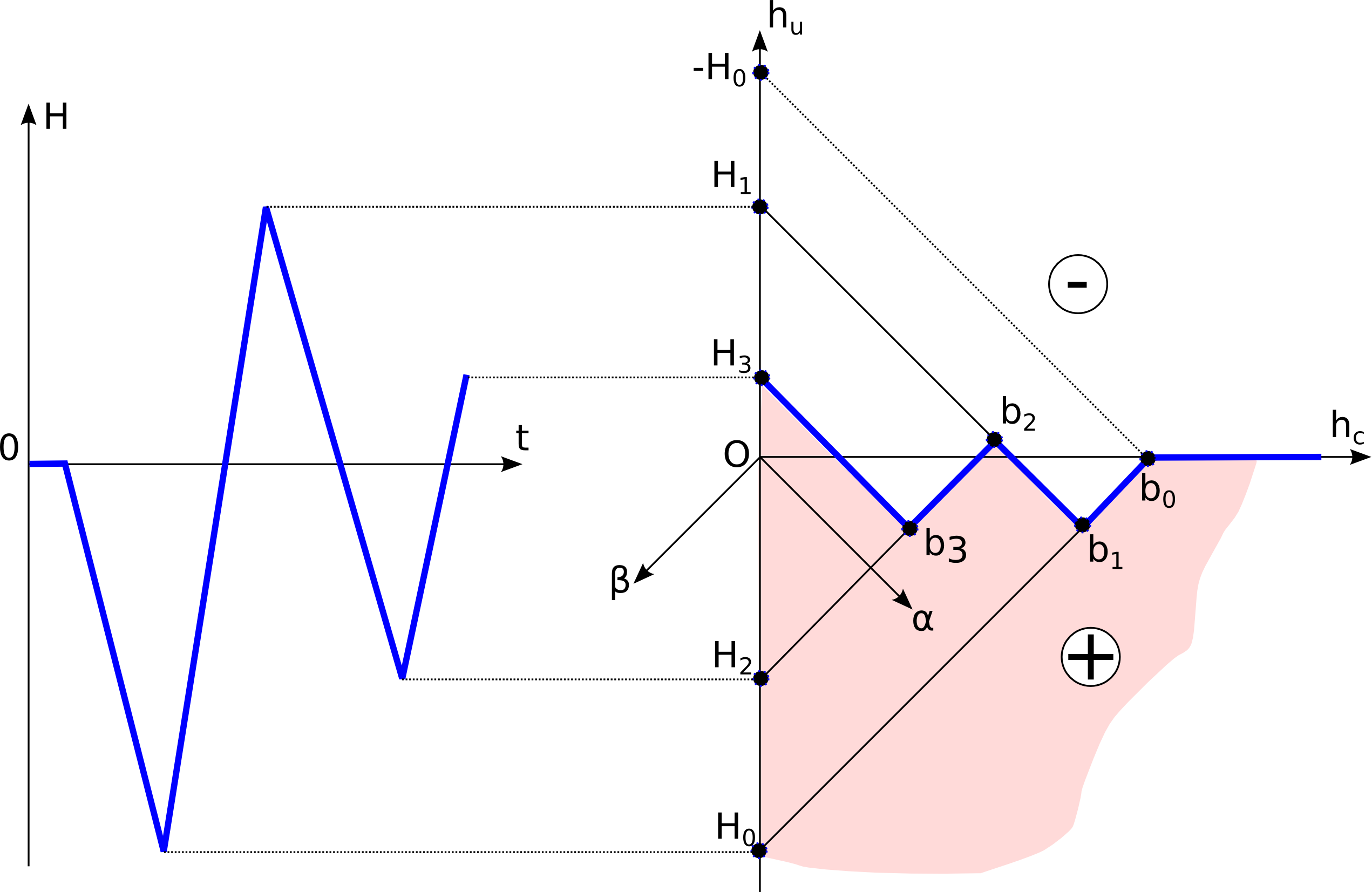}
    \caption{The input $H(t)$ applied to the Preisach model in the
      demagnetized state (left).  Preisach diagram in coordinates
      $h_c, h_u$ and the state variables $\xi_0,\,\ldots,\,\xi_n$
      (right). The line shown in bold is the boundary $h_u = b(h_c)$
      between hysterons in the positive and negative states. The
      demagnetized state corresponds to the boundary that coincides
      with the $h_c$ axis. The output of the Preisach model can be
      expressed via integrals of the Preisach distribution function
      over the ``initial magnetization triangle'' $OH_0b_0$ and the
      triangles $T(H_0,H_1) = H_0H_1b_1$, $T(H_1,H_2) = H_1H_2b_2$, $
      T(H_2,H_3) = H_2H_3b_3$.}
  \label{fig:PM}
  \end{center}
\end{figure}
 
  The equation for $E(\alpha,\beta)$ reads
\begin{equation}\label{Everett}
E(\alpha,\beta) = 2\int^\alpha_\beta\int^{\alpha'}_\beta 
\mu(\alpha',\beta')\,d\alpha'\,d\beta', \quad \mbox{where}
\quad \alpha' = h_u + h_c, \quad \beta' = h_u - h_c,
\quad \alpha \geq \beta,
\end{equation}
and $\mu(\alpha,\beta)$ is the Preisach distribution function in the
$(\alpha,\beta)$-plane.  
The symmetry of $E(\alpha,\beta)$ follows naturally from its
definition as integral over the triangle $T(\alpha,\beta)$.  It can
not be derived from (\ref{Everett}), because $\alpha \leq \beta$ imply
$\alpha' \leq \beta'$, i.e., for $\alpha \leq \beta$ the triangle
belongs to the other half-plane.

The first term in (\ref{PM-out1}), $E(H_0)$, is multiplied by 2
integral over the ``initial magnetization triangle'', like $OA_0B_0$
in Fig.~\ref{fig:PM}.  If the Preisach distribution function is
symmetric with respect to $h_c$ axis, $y_0(0)=0$ and $E(H_0) = E(H_0,
-H_0)/2$.

If the $E(\alpha,\beta)$ is known, $\mu(\alpha,\beta)$ can be found  
by differentiating (\ref{Everett}),
\begin{equation}\label{p-distr}
\mu(\alpha, \beta) = \frac{1}{2}\frac{\partial^2 E(\alpha, \beta)}
{\partial\alpha\;\partial\beta},
\quad \alpha \geq \beta.
\end{equation}

Let us consider conditions \Y0 -- \Y2 for $y_n(\xi_0,\ldots,\xi_n)$,
assuming that $H_0,\,\ldots\,,H_n$ in the right side of
(\ref{PM-out1}) are expressed via $\xi_0,\ldots,\xi_n$ according to
(\ref{xi->rmem}).  If $\xi_n = 0$ then $H_n = H_{n-1}$ and
$E(H_{n-1}, H_n) = 0$, therefore we have $y_n(\xi_0,\ldots,\xi_{n-1},0) =
y_{n-1}(\xi_0,\ldots,\xi_{n-1})$, i.e., \Y0 holds true.  Let $\xi_{k+1}=\xi_k$
for some $k$, $1\leq k\leq n-1$. This pair of variables vanishes from
all terms $E(H_{r-1}, H_r)$ such that $r\geq k$, and $E(H_{k-1},
H_k)$, $E(H_k, H_{k+1})$ become equal canceling each other out. Thus,
$y_n(\xi_0,\ldots,\xi_n) = y_{n-2}(\xi_0,\ldots,\xi_{k-1},\xi_{k+2},
\ldots \xi_n)$, which means that \Y1 holds true.

Obviously, each term $E(H_k, H_{k+1})$ satisfy \Y2 for
$k=1,\,\ldots\,,n-1$. For remaining two terms, taking into account
(\ref{xi->rmem}), we have
\begin{equation}\label{Y2-H0,H1}
\left(2\frac{\partial}{\partial\xi_0} + \frac{\partial}{\partial\xi_1}\right)
\big[-E(H_0) + H(H_0,H_1)\big] = 
\frac{\partial E(H_0)}{\partial H_0} - \frac{\partial E(H_0,H_1)}{\partial H_0}.
\end{equation}
As follows from (\ref{xi->rmem}), if $\xi_0 = \xi_1$ then $H_1 =
-H_0$, and vice versa. From the Preisach diagram in Fig \ref{fig:PM}
it can be seen directly that the right side of (\ref{Y2-H0,H1}) is
zero if $H_1 = -H_0$; this means that \Y2 holds true. 

\smallskip {\small Let $H_1 = H_0$, and $H_1$ in $E(H_0,H_1)$
  increases by the value $\delta H$. Assume that $H_0$ in $E(H_0)$
  increases by the same value.  It can be seen that both the triangles
  increase its areas by almost the same strips.  (The strips differs
  by the triangle, which area is $(\delta\xi)^2/4$). Hence, the right
  side of (\ref{Y2-H0,H1}) is zero if $\mu(\alpha,\beta)$ is bounded for
  $\alpha > \beta$.} \smallskip

Equation (\ref{Y2-H0,H1}) allows to express $E(H_0)$ via
$E(\alpha,\beta)$ and rewrite (\ref{PM-out1}) in the following form:
\begin{equation}\label{PM-out2}
y_n(H_0,\,\ldots\,,H_n) - y_0(0) = -\int_0^{H_0} \frac{\partial E(\alpha, \beta)}
{\partial \alpha}\Big|_{\beta = -\alpha}d\alpha +
E(H_0,H_1) - E(H_2,H_3) + \ldots \pm E(H_{n-1},H_n).
\end{equation}

The output (\ref{PM-out2}) satisfy conditions \Y0 -- \Y2 if
$E(\alpha,\beta)$ has partial derivatives, is symmetric with respect
to its arguments, and turns into zero when $\alpha = \beta$.  Note,
that the left and right derivatives on the line $\alpha = \beta$ are
not necessarily equal, because the points $H_k = H_{k+1}$ are newer
crossed.

As an example, let $E(\alpha,\beta) = \varphi(|\alpha - \beta|)$,
where $\varphi$ is an arbitrary smooth function such that $\varphi(0)
= 0$. Taking into account (\ref{xi->rmem}), it can be seen that
in this case (\ref{PM-out2}) turns into (\ref{M(xi)-phi}). The
Preisach distribution function determined by (\ref{p-distr}) has a
Dirac delta term:
\begin{equation}\nonumber
\mu(\alpha, \beta) =  \frac{1}{2}\varphi''(\alpha -\beta) +
\varphi'(0)\cdot\delta(\alpha -\beta),
\quad \alpha \geq \beta.
\end{equation}
This shows that the smoothness of $\mu(\alpha,\beta)$ in the classical
Preisach model is, in some sense, more strict condition than the
smoothness of $y(\xi_0,\,\ldots,\,\xi_n)$ according to
(\ref{deriv-bounded}).  Without the delta term, the initial
susceptibility is zero at the reversal points for any Preisach
distribution function.  This effect is eliminated in the moving
Preisach model by introducing the mean field interaction.

Another element of the Preisach model that can be expressed via
coordinates $\xi_0,\,\ldots\,,\xi_n$, is the staircase boundary
$b(h_c)$ between the Preisach units in $+1$ and $-1$ states. Taking
into account (\ref{xi-def}), it can be seen that the $h_c$-coordinates
of points $b_0,\,\ldots\,,b_3$ are $\xi_0/2,\,\ldots\,,\xi_3/2$
correspondingly, and the state evolution rules in $\xi$-coordinates
are the same as presented in Table \ref{tab:state-trans}. Also, it
is not difficult to obtain the following equation for the boundary:
\begin{equation}\nonumber
  b(h_c) = \int^\infty_{2h_c}\left[-\frac{1}{2}e(\xi_0 - \xi) + e(\xi_1 - \xi) - 
 e(\xi_2 - \xi) + \ldots \pm e(\xi_n - \xi)\right]d\xi,
\end{equation}
where $e$ denotes the Heaviside step function. Two states of Preisach
model differ in the number of the hysterons enclosed between two
boundaries $b(h_c)$.  Thus, the distance between the boundaries can be
considered as a distance between states of the Preisach
model. Probably, we can use the same metric for the states
parametrized by coordinates $\xi_0,\,\ldots,\,\xi_n$ irrespective of
the Preisach model. For example, it can be a metric induced by the
following norm:
\begin{equation}\nonumber
\max_{\xi\in[0,\xi_M]} \left|\int^{\xi_M}_\xi
\left[-\frac{1}{2}e(\xi_0 - \xi') + e(\xi_1 - \xi') - 
 e(\xi_2 - \xi') + \ldots \pm e(\xi_n - \xi')\right]d\xi'\right|,
\end{equation}
where the integrands are considered as a subset in the linear space of
integrable functions.

As it can be seen from the above, the state-space of the Preisach
model can be parametrized by the variables
$\xi_0,\xi_1\,\ldots\,,\xi_n$. All conditions \Y0 -- \Y2 hold true for
the classical Preisach model, as it must be, because the model
exhibits the return point memory and has the reachable demagnetized
state. However, the Preisach model uses a definite form of the
read-out functions (\ref{PM-out1}), which is, from the point of view
studied in this article, a special case.

\section{Conclusions}

The definition of the input-output system includes a set of admissible
inputs ${\cal U}$ and a set of output variables~$Y$. These two sets
determine the experiments that can be used to study the behavior of
the system put into a fixed initial state before each experiment. If
the input equivalence is known, the state-space and the state
transition law can be established without modeling the internal
structure of the system. This approach is applicable to the
deterministic stationary input-output systems, not necessary
hysteretic (see Appendix~\ref{app:state-space}).  This method does not
take into account energy conservation law or any other thermodynamic
restrictions. In this aspect, the description based on the input
equivalence is similar to kinematics.

\medskip

In the case of the scalar hysteresis, the input equivalence is
determined by the return point memory and rate independence, as
described in Section \ref{sec:RPM}.  Coordinates $\xi_0,\xi_1,
\,\ldots\,, \xi_n$ in the state-space are introduced via the
differences $\Delta H_i$ along the hysteresis branches.  The
consistency with the return point memory and the reachability of the
demagnetized state provide necessary and sufficient conditions \Y0 --
\Y2 on the read-out functions $y_n(\xi_0,\xi_1\, \ldots\,,\xi_n)$.

The coordinate transformations allow to switch to coordinates
$\xi'_0,\,\ldots\,,\xi'_n$ that have the same properties with respect
to a different input, e.g., $M(t)$ instead of $H(t)$. In this way, a
direct and a reverse ferromagnetic hysteresis can be expressed in the
similar form.  A set of termwise operations can be performed on the
sequences of functions $y_n(\xi_0,\,\ldots\,,\xi_n)$ without violating
conditions \Y0 -- \Y2. Together with the coordinate transformations,
it can be used as a tool for building different approximations of
multiple order reversal curves. Though the consideration is made in
the framework of magnetic hysteresis, the results, probably, can be
applied or adapted to other manifestations of hysteresis.

\bibliography{hyst}{}
\bibliographystyle{plain}

\appendix
\section{Appendix: State-Space of the Input-Output Systems
}\label{app:state-space}

This Appendix describes how the state-space and the state transition
law can be established for any stationary input-output system,
assuming that the input equivalence is known from the experiments. Its
aim is to provide a background to the consideration presented in the
main part of the article. The starting point is a definition of the
input-output system close to one given in \cite{Willems1972}.

\begin{dfn}
  \label{dfn:io-system}
  An input-output system $\Sigma = \Sigma(U,{\cal U}, Y, {\cal Y}, F)$
  is a collection of a set $U$ of {\em input variables}, an {\em input
    space} ${\cal U}$ of $U$-valued functions on $\mathbb{R}$, a set
  $Y$ of {\em output variables}, an {\em output space} ${\cal Y}$ of
  $Y$-valued functions on $\mathbb{R}$, and an {\em input-output map}
  $F$ from ${\cal U}$ into ${\cal Y}$, such that the following
  conditions are met:
\begin{enumerate}[label=(\roman*)]
\item For any $T \in\mathbb{R}$, if $u(t) \in{\cal U},\; y(t) \in
  {\cal Y}$, then $u(t + T)\in {\cal U},\; y(t + T) \in{\cal Y}$;
\item For any $u_1, u_2 \in {\cal U}$, if $u_1(t') = u_2(t')$ for all
  $t'\leq t$, then $F[u_1](t') = F[u_2](t')$ for all $t' \leq t$;
\item For any $u_1, u_2 \in {\cal U}$, $T \in \mathbb{R}$, if $u_2(t)
  = u_1(t + T)$ for all $t\in\mathbb{R}$, then $F[u_2](t) = F[u_1](t +
  T)$\\ for all $t\in\mathbb{R}$;
\item For any $u\in{\cal U}$, there exists a value $t_0(u)$ such that
  $u(t) = u_0$ for all $t\leq t_0$. The value $u_0$ is the same for
  all $u \in{\cal U}$.
\end{enumerate}
\end{dfn}

We interpret the definition of $\Sigma$ as a description of the
system's behavior in a class of experiments specified by the set
of admissible inputs ${\cal U}$ and the set of measured output
variables $Y$.  The properties of the system itself are represented by
the map $F$.  In the above definition item (i) means that the spaces
${\cal U}, {\cal Y}$ are closed under the shift operator; items (ii)
and (iii) mean that the system assumed to be {\em deterministic} and
{\em stationary}. Determinism reflects the fact that the input
completely determines the output, and the future can not influence the
past.  Stationarity means that the experiments with the system can be
performed at any time with the same result.

Because it is impossible to start experiment at $t = -\infty$, it is
assumed that the system is put into the same initial state $x_0$ at
the beginning $t_0$ of each experiment, where $x_0$ is a steady state
under the constant input $u_0$. The term ``put into the initial
state'' means that we do something with the system that makes the
output be uniquely determined by the input. The system must not change
while $u(t) = u_0$, and hence we can let $u(t) = u_0$ for $t
\leq t_0(u)$ according to item (iv) of the
definition.

\begin{dfn}\label{dfn:prolong}
  The restriction of the input $u$ to the interval $(-\infty,t)$ is
  designated as $u^t$ and called {\em input} along with the inputs
  defined on the interval $(-\infty, \infty)$. The set of all inputs
  $u^t$ is designated as ${\cal U}^t$, and the union $\underset{t\in
    \mathbb{R}}\cup\;{\cal U}^t$ is designated as ${\cal U}^*$. Any
  input $\tilde{u}\in {\cal U}$ such that $\tilde{u}^t = u^t$ is
  called {\em prolongation} of the input $u^t$.
\end{dfn}
We denote the restriction of an input $u\in{\cal U}$ to the interval
$[t_1,t_2)$ as $u^{[t_1,t_2)}$, and concatenation of two restricted
inputs as $u^{[t_1,t_2)}\vee u^{[t_2,t_3)}$. Notations for the outputs
are defined in the similar way.

\medskip {\small Note that the point $t$ does not belong to the domain
  of $u^t$.  Inputs and outputs are not necessary assumed to be
  continuous functions of time. Thus, we must take care for the time
  intervals not to overlap.  For continuous inputs and outputs instead
  of $(-\infty, t)$, $[t_1,t_2)$ can be used $(-\infty,t]$ and
  $[t_1,t_2]$, see also Proposition \ref{uy-read-out}.}  \medskip

Let us define {\em equivalent inputs} as the inputs from
${\cal U}^t$ that are not distinguishable after the end time $t$ in
the experiments with the system as follows.

\begin{dfn}
  \label{dfn:tequiv}
  We call inputs $u_1^t, u_2^t \in {\cal U}^t$ {\em equivalent} and
  write $u_1^t\sim u_2^t$ if both two conditions are met:
  \begin{enumerate}[label=(\roman*)]
  \item For any prolongation $\tilde{u}_1$ of $u_1^t$ there exists a
    prolongation $\tilde{u}_2$ of $u_2^t$, and vice versa, such that\\
    $\tilde{u}_1^{[t,\infty)} = \tilde{u}_2^{[t,\infty)}$;
  \item For any prolongations $\tilde{u}_1,\,\tilde{u}_2$ of
    $u_1^t,\,u_2^t$, such that $\tilde{u}_1^{[t,\infty)} =
    \tilde{u}_2^{[t,\infty)}$, there holds $\tilde{y}_1^{[t,\infty)}
    = \tilde{y}_2^{[t,\infty)}$, where $\tilde{y}_1 =
    F[\tilde{u}_1],\; \tilde{y}_2 = F[\tilde{u}_2]$
  \end{enumerate}
\end{dfn}

The item (i) means that the equivalent inputs can be always compared
using the prolongations that are the same after the end time~$t$.
According to the item (ii), such prolongations must yield the same
outputs after~$t$. The binary relation ``$\sim$'' on ${\cal U}^t$ is
the {\em equivalence relation} according to the following lemma.

\begin{lem}
  \label{lem:tequiv}
  The binary relation on the set of inputs ${\cal U}^t$ introduced by
  Definition \ref{dfn:tequiv} is reflexive ($u^t \sim u^t$),
  symmetric (if $u_1^t \sim u_2^t$ then $u_2^t \sim u_1^t$), and
  transitive (if $u_1^t \sim u_2^t$ and $u_2^t \sim u_3^t$ then $u_1^t
  \sim u_3^t$).
\end{lem}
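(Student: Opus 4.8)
The plan is to verify reflexivity, symmetry, and transitivity in turn, disposing of the first two quickly and concentrating the effort on transitivity. For \textbf{reflexivity}, I would check both clauses of Definition~\ref{dfn:tequiv} with $u_1^t = u_2^t = u^t$. Clause (i) holds by choosing, for any prolongation $\tilde{u}_1$ of $u^t$, the same prolongation $\tilde{u}_2 = \tilde{u}_1$. For clause (ii), if two prolongations $\tilde{u}_1,\tilde{u}_2$ of $u^t$ satisfy $\tilde{u}_1^{[t,\infty)} = \tilde{u}_2^{[t,\infty)}$, then since both restrict to $u^t$ on $(-\infty,t)$ they coincide on all of $\mathbb{R}$; hence $F[\tilde{u}_1] = F[\tilde{u}_2]$, so in particular $\tilde{y}_1^{[t,\infty)} = \tilde{y}_2^{[t,\infty)}$. \textbf{Symmetry} is essentially built into the definition: clause (i) already contains the phrase ``and vice versa'', and clause (ii) is stated symmetrically in the indices $1$ and $2$, so interchanging the roles of $u_1^t$ and $u_2^t$ leaves both clauses unchanged.

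The substance of the lemma is \textbf{transitivity}. Assume $u_1^t \sim u_2^t$ and $u_2^t \sim u_3^t$; I must establish both clauses for the pair $u_1^t, u_3^t$. For clause (i), given any prolongation $\tilde{u}_1$ of $u_1^t$, I would first invoke clause (i) of $u_1^t \sim u_2^t$ to produce a prolongation $\tilde{u}_2$ of $u_2^t$ with $\tilde{u}_1^{[t,\infty)} = \tilde{u}_2^{[t,\infty)}$, then feed this $\tilde{u}_2$ into clause (i) of $u_2^t \sim u_3^t$ to produce a prolongation $\tilde{u}_3$ of $u_3^t$ with $\tilde{u}_2^{[t,\infty)} = \tilde{u}_3^{[t,\infty)}$. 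Chaining the two equalities gives $\tilde{u}_1^{[t,\infty)} = \tilde{u}_3^{[t,\infty)}$, and the reverse direction follows by the same argument with the roles swapped.

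For clause (ii) I would take arbitrary prolongations $\tilde{u}_1,\tilde{u}_3$ of $u_1^t, u_3^t$ with $\tilde{u}_1^{[t,\infty)} = \tilde{u}_3^{[t,\infty)}$ and manufacture a \emph{bridging} prolongation of the middle input. By clause (i) of $u_1^t \sim u_2^t$ there is a prolongation $\tilde{u}_2$ of $u_2^t$ with $\tilde{u}_2^{[t,\infty)} = \tilde{u}_1^{[t,\infty)}$. Clause (ii) of $u_1^t \sim u_2^t$ then yields $\tilde{y}_1^{[t,\infty)} = \tilde{y}_2^{[t,\infty)}$. Moreover $\tilde{u}_2^{[t,\infty)} = \tilde{u}_1^{[t,\infty)} = \tilde{u}_3^{[t,\infty)}$, so clause (ii) of $u_2^t \sim u_3^t$ yields $\tilde{y}_2^{[t,\infty)} = \tilde{y}_3^{[t,\infty)}$. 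Combining the two gives $\tilde{y}_1^{[t,\infty)} = \tilde{y}_3^{[t,\infty)}$, which is exactly clause (ii) for $u_1^t, u_3^t$.

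The step I expect to be the crux is precisely the construction of this bridging prolongation $\tilde{u}_2$. Clause (ii) only compares outputs of prolongations that already agree on $[t,\infty)$, so without a common prolongation of $u_2^t$ matching $\tilde{u}_1$ (and hence $\tilde{u}_3$) on $[t,\infty)$ the two output-equalities cannot be spliced together. This is exactly the role of clause (i), which guarantees that such matching prolongations always exist. Thus the whole proof hinges on recognizing that clause (i) supplies the comparability that makes the output-equality of clause (ii) propagate through the middle input $u_2^t$; everything else is a routine restriction-and-concatenation bookkeeping on $(-\infty,t)$ versus $[t,\infty)$.
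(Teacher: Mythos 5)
Your proof is correct and follows essentially the same route as the paper: reflexivity and symmetry are read off the definition, and transitivity is established by using clause (i) to construct a bridging prolongation of the middle input $u_2^t$ and then applying clause (ii) twice. The only cosmetic difference is that the paper builds two prolongations $\tilde{u}_{2'},\tilde{u}_{2''}$ of $u_2^t$ and notes they coincide, whereas you use a single bridge and apply clause (ii) of $u_2^t\sim u_3^t$ directly to the pair $(\tilde{u}_2,\tilde{u}_3)$ --- the same idea with slightly tidier bookkeeping.
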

\begin{proof}
  The reflexivity and the symmetry are obvious. To prove the
  transitivity, let $u_1^t\sim u_2^t$ and $u_2^t\sim u_3^t$. According
  to Definition \ref{dfn:tequiv}, for any prolongation $\tilde{u}_1$
  there exists the prolongation $\tilde{u}_2$ such that
  $\tilde{u}_1^{[t,\infty)} = \tilde{u}_2^{[t,\infty)}$. Similarly,
  for prolongation $\tilde{u}_2$ must exist $\tilde{u}_3$, such that
  $\tilde{u}_2^{[t,\infty)} = \tilde{u}_3^{[t,\infty)}$. Thus,
  $\tilde{u}_1^{[t,\infty)} = \tilde{u}_3^{[t,\infty)}$, and the first
  condition of Definition \ref{dfn:tequiv} is met.
  
  Suppose now that for prolongations $\tilde{u}_1$ of $u_1^t$ and
  $\tilde{u}_3$ of $u_3^t$ there holds $\tilde{u}_1^{[t,\infty)} =
  \tilde{u}_3^{[t,\infty)}$. Because $u_1^t \sim u_2^t$ and $u_2^t
  \sim u_3^t$ there exist prolongations $\tilde{u}_{2'}$ and
  $\tilde{u}_{2''}$ of $u_2^t$ such that $\tilde{u}_{2'}^{[t,\infty)}
  = \tilde{u}_1^{[t,\infty)}$ and $\tilde{u}_{2''}^{[t,\infty)} =
  \tilde{u}_3^{[t,\infty)}$.  However,
  $\tilde{u}_1^{[t,\infty)}=\tilde{u}_3^{[t,\infty)}$, which means
  that $\tilde{u}_{2'} = \tilde{u}_{2''}$. Due to $u_1^t \sim u_2^t$
  and $u_2^t \sim u_3^t$ we have $\tilde{y}_1^{[t,\infty)} =
  \tilde{y}_{2'}^{[t,\infty)}$ and $\tilde{y}_3^{[t,\infty)} =
  \tilde{y}_{2''}^{[t,\infty)}$. Hence, $\tilde{y}_1^{[t,\infty)} =
  \tilde{y}_3^{[t,\infty)}$. 
\end{proof}

With Definition \ref{dfn:tequiv}, the inputs $u^t,\,v^t\in {\cal U}^t$,
i.e. the inputs ending at the same time $t$, can be compared.  To
compare inputs $u_1^{t_1},\,u_2^{t_2}\in {\cal U}^*$, we can shift one
of them or both to make the end times equal and compare them using
Definition \ref{dfn:tequiv}. Let us introduce the shift operator
$\sigma^T$ on ${\cal U}^*$ that acts on the input $u^t$ as follows:
$\sigma^Tu^t(t') = u^{t+T}(t'-T)$. Of course, usually $\sigma^Tu^t \neq
u^{t+T}$, however, we can write $\sigma^{T_1}\sigma^{T_2} =
\sigma^{T_1+T_2}$ and $\sigma^T{\cal U}^t = {\cal U}^{t+T}$.

\begin{lem}
  \label{lem:sigma-equiv}
  The equivalence relations on the sets of inputs ${\cal U}^t$ are
  invariant under the shift operator, i.e., if $u_1^t\sim u_2^t$, then for
  any $T\in\mathbb{R}$ there holds $\sigma^Tu_1^t\sim \sigma^Tu_2^t$.
\end{lem}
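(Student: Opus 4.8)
The plan is to lift the shift from the restricted inputs to the full prolongations and then let stationarity carry the output agreement across the translation. For a full input $\tilde{u}\in{\cal U}$ let $S^T$ denote the ordinary time translation $(S^T\tilde{u})(t') = \tilde{u}(t'-T)$, which is the natural extension of $\sigma^T$ from restricted inputs to elements of ${\cal U}$. By item (i) of Definition~\ref{dfn:io-system} the space ${\cal U}$ is closed under all translations, so $S^T\colon{\cal U}\to{\cal U}$ is a bijection with inverse $S^{-T}$; and by the stationarity item (iii) the map $F$ commutes with translation, $F[S^T\tilde{u}] = S^T F[\tilde{u}]$. First I would record the compatibility of $S^T$ with restriction: if $\tilde{u}$ is a prolongation of $u^t$ (Definition~\ref{dfn:prolong}), then for $t'<t+T$ we have $(S^T\tilde{u})(t') = \tilde{u}(t'-T) = u^t(t'-T) = (\sigma^T u^t)(t')$, so $(S^T\tilde{u})^{t+T} = \sigma^T u^t$ and $S^T\tilde{u}$ is a prolongation of $\sigma^T u^t$. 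Hence $S^T$ is a bijection between the prolongations of $u_i^t$ and those of $\sigma^T u_i^t$, and it rigidly moves the half-line: $(S^T\tilde{u})^{[t+T,\infty)}$ is the $T$-translate of $\tilde{u}^{[t,\infty)}$, so two full inputs agree on $[t,\infty)$ if and only if their $S^T$-images agree on $[t+T,\infty)$.

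Next I would verify item (i) of Definition~\ref{dfn:tequiv} for the pair $\sigma^T u_1^t,\sigma^T u_2^t$. Given any prolongation $\tilde{v}_1$ of $\sigma^T u_1^t$, its inverse translate $S^{-T}\tilde{v}_1$ is a prolongation of $u_1^t$; using $u_1^t\sim u_2^t$ I would choose a prolongation $\tilde{u}_2$ of $u_2^t$ with $(S^{-T}\tilde{v}_1)^{[t,\infty)} = \tilde{u}_2^{[t,\infty)}$ and set $\tilde{v}_2 = S^T\tilde{u}_2$. Then $\tilde{v}_2$ is a prolongation of $\sigma^T u_2^t$, and by the half-line compatibility above $\tilde{v}_1^{[t+T,\infty)} = \tilde{v}_2^{[t+T,\infty)}$. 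The reverse direction is identical by symmetry, so item (i) holds.

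Finally I would verify item (ii), which is the only place where stationarity is genuinely used. Suppose $\tilde{v}_1,\tilde{v}_2$ are prolongations of $\sigma^T u_1^t,\sigma^T u_2^t$ with $\tilde{v}_1^{[t+T,\infty)} = \tilde{v}_2^{[t+T,\infty)}$. Passing to $\tilde{u}_i = S^{-T}\tilde{v}_i$ gives prolongations of $u_i^t$ agreeing on $[t,\infty)$, so $u_1^t\sim u_2^t$ yields $\tilde{y}_1^{[t,\infty)} = \tilde{y}_2^{[t,\infty)}$ with $\tilde{y}_i = F[\tilde{u}_i]$. Since $\tilde{v}_i = S^T\tilde{u}_i$, stationarity gives $F[\tilde{v}_i] = S^T\tilde{y}_i$, and translating the output agreement from $[t,\infty)$ to $[t+T,\infty)$ produces $F[\tilde{v}_1]^{[t+T,\infty)} = F[\tilde{v}_2]^{[t+T,\infty)}$. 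This is item (ii), whence $\sigma^T u_1^t\sim\sigma^T u_2^t$.

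The argument is essentially bookkeeping, and I do not expect a serious obstacle: the one substantive ingredient is stationarity (item (iii)), invoked to commute $F$ with the translation. The main thing to be careful about is keeping the two operators distinct — $\sigma^T$ on restricted inputs versus $S^T$ on full prolongations — and checking that restriction and translation commute exactly as claimed, since that is what makes the window $[t,\infty)$ move rigidly to $[t+T,\infty)$ for both inputs and outputs.
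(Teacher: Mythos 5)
Your proof is correct and follows exactly the route the paper intends: the paper's own proof is a one-line remark that the claim is straightforward from the stationarity condition of Definition~\ref{dfn:io-system}, and your argument is precisely that remark carried out in full, with the translation $S^T$ lifted to prolongations and $F$ commuted with it via item (iii). Nothing in your bookkeeping deviates from or adds to the paper's approach; you have simply supplied the details it omits.
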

\begin{proof}
  The proof is straightforward, taking into account the stationarity
  condition according to Definition~\ref{dfn:io-system}.
\end{proof}

\noindent
Using Lemma \ref{lem:sigma-equiv}, we can extend the input equivalence
from sets ${\cal U}^t$ to the set ${\cal U}^*$ as defined below.

\begin{dfn}
  \label{dfn:*equiv} Inputs $u_1^{t_1}, u_2^{t_2}\in{\cal U}^*$ we
  call equivalent and write $u_1^{t_1} \sim u_2^{t_2}$ if and only if
  the inputs $\sigma^{t_1 - t_2} u_2^{t_2}$ and $u_1^{t_1}$ are
  equivalent in the sense of Definition~\ref{dfn:tequiv}.
\end{dfn}
\begin{prop}
  The binary relation introduced by Definition \ref{dfn:tequiv} and
  extended to the set ${\cal U}^*$ according to Definition
  \ref{dfn:*equiv} is reflexive, symmetric, and transitive, i.e., is
  the equivalence relation.
  \label{prop:*equiv}
\end{prop}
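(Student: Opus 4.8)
The plan is to lift all three properties from the already-established equivalence on each fixed slice ${\cal U}^t$ (Lemma~\ref{lem:tequiv}) up to the whole space ${\cal U}^*$, using only shift-invariance (Lemma~\ref{lem:sigma-equiv}) and the group law $\sigma^{T_1}\sigma^{T_2}=\sigma^{T_1+T_2}$ with $\sigma^0=\mathrm{id}$. In each case I would unfold Definition~\ref{dfn:*equiv}, which replaces the mixed-end-time comparison of $u_1^{t_1}$ and $u_2^{t_2}$ by the same-end-time comparison of $u_1^{t_1}$ and $\sigma^{t_1-t_2}u_2^{t_2}$ inside ${\cal U}^{t_1}$ (note $\sigma^{t_1-t_2}u_2^{t_2}\in{\cal U}^{t_1}$ since $\sigma^T{\cal U}^t={\cal U}^{t+T}$), and then invoke the corresponding property of $\sim$ on ${\cal U}^{t_1}$ from Lemma~\ref{lem:tequiv}.

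Reflexivity is immediate: for $u^t\in{\cal U}^*$, Definition~\ref{dfn:*equiv} asks whether $\sigma^{0}u^t=u^t$ is equivalent to $u^t$ on ${\cal U}^t$, which is exactly the reflexivity already furnished by Lemma~\ref{lem:tequiv}. For symmetry, suppose $u_1^{t_1}\sim u_2^{t_2}$, i.e.\ $\sigma^{t_1-t_2}u_2^{t_2}\sim u_1^{t_1}$ on ${\cal U}^{t_1}$. I would apply symmetry on ${\cal U}^{t_1}$ to get $u_1^{t_1}\sim\sigma^{t_1-t_2}u_2^{t_2}$, then shift by $\sigma^{t_2-t_1}$ (Lemma~\ref{lem:sigma-equiv}) and use the group law to obtain $\sigma^{t_2-t_1}u_1^{t_1}\sim\sigma^{t_2-t_1}\sigma^{t_1-t_2}u_2^{t_2}=u_2^{t_2}$ on ${\cal U}^{t_2}$, which is precisely $u_2^{t_2}\sim u_1^{t_1}$ by Definition~\ref{dfn:*equiv}.

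For transitivity, assume $u_1^{t_1}\sim u_2^{t_2}$ and $u_2^{t_2}\sim u_3^{t_3}$, i.e.\ $\sigma^{t_1-t_2}u_2^{t_2}\sim u_1^{t_1}$ on ${\cal U}^{t_1}$ and $\sigma^{t_2-t_3}u_3^{t_3}\sim u_2^{t_2}$ on ${\cal U}^{t_2}$. The one bit of bookkeeping is that these two statements live on different slices, so I would first transport the second to end time $t_1$: applying $\sigma^{t_1-t_2}$ and using the group law gives $\sigma^{t_1-t_3}u_3^{t_3}\sim\sigma^{t_1-t_2}u_2^{t_2}$ on ${\cal U}^{t_1}$. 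Now both relations live on ${\cal U}^{t_1}$ and share the middle term $\sigma^{t_1-t_2}u_2^{t_2}$, so transitivity on ${\cal U}^{t_1}$ (Lemma~\ref{lem:tequiv}) yields $\sigma^{t_1-t_3}u_3^{t_3}\sim u_1^{t_1}$, i.e.\ $u_1^{t_1}\sim u_3^{t_3}$. The only real obstacle is keeping the shift exponents consistent; everything else is a direct appeal to the two lemmas, and it is exactly the identity $\sigma^{t_1-t_2}\sigma^{t_2-t_3}=\sigma^{t_1-t_3}$ that makes the intermediate term match so the chain closes.
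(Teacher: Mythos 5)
Your proposal is correct and takes essentially the same route as the paper: reflexivity reduces via $\sigma^0=\mathrm{id}$ to Lemma~\ref{lem:tequiv}, symmetry follows by applying $\sigma^{t_2-t_1}$ with Lemma~\ref{lem:sigma-equiv}, and transitivity by applying $\sigma^{t_1-t_2}$ to the second relation so both live on ${\cal U}^{t_1}$ and then invoking transitivity there. Your write-up merely makes explicit some bookkeeping the paper leaves implicit (the use of slice-level symmetry and the identity $\sigma^{t_1-t_2}\sigma^{t_2-t_3}=\sigma^{t_1-t_3}$), with no substantive difference.
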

\begin{proof} The reflexivity follows directly from Lemma
  \ref{lem:tequiv}. The symmetry can be proved by applying
  $\sigma^{t_2-t_1}$ to the both sides of the relation $\sigma^{t_1 - t_2}
  u_2^{t_2} \sim u_1^{t_1}$ and using Lemma \ref{lem:sigma-equiv}.
  To prove the transitivity, assume that $u_1^{t_1}
  \sim\sigma^{t_1-t_2} u_2^{t_2}\,, u_2^{t_2}\sim
  \sigma^{t_2-t_3}u_3^{t_3}$. By applying $\sigma^{t_1-t_2}$ to the
  both sides of the second equivalence and using the transitivity of
  ``$\sim$'' on ${\cal U}^{t_1}$, we get
  $u_1^{t_1}\sim\sigma^{t_1-t_3}u_3^{t_3}$, which proves the
  statement.
\end{proof}

It is well known that an equivalence relation partitions the set into
{\em equivalence classes}, so that each element of the set belongs to
one and only one class, and all elements in one class are
equivalent. {\em Projection map} gives the equivalence class by the
element of the set. The projection map is determined by the
equivalence relation and, vice versa, the equivalence relation is
determined by the projection map.

\begin{dfn}
  \label{dfn:states}
  Let ``$\sim$'' be the equivalence relation on ${\cal U}^*$ according to
  Definition \ref{dfn:*equiv}. Then the set of equivalence classes $X
  = {\cal U}^*\!/\!\!\sim$ is called {\em (minimal) space of states}
  of the system $\Sigma$. The corresponding projection map is
  designated as $\Gamma:{\cal U}^*\rightarrow X$.
\end{dfn}

Giving the name ``state'' to equivalence class $x\in X$ is approved by the
following consideration.

\bigskip

According to the definition of equivalence classes, $u_1^{t_1}\sim
u_2^{t_2}$ is the same as $u_1^{t_1},u_2^{t_2}\in x$ for some $x\in
X$, and $\Gamma(u_1^{t_1}) = \Gamma(u_2^{t_2})$ if and only if
$u_1^{t_1} \sim u_2^{t_2}$.  For each input $u^t \in {\cal U}^*$ there
exists one and only one equivalence class $x$ such that $u^t\in x$. We
can write
\begin{equation}\label{Gamma}
x(t) = \Gamma(u^t),
\end{equation}
where the state $x$ is associated with the end time of the input
$u^t$. This means that if the system evolves under some input $u\in
{\cal U}$, $x(t)$ shows its state at each time instance $t$. The
``initial state'' $x_0$ introduced earlier to establish the connection
between a physical system and its mathematical description, is, in
terms of the equivalence classes, the set of constant inputs such that
$u^t(t') = u_0$ for all $t'\in (-\infty, t)$. For any $u \in {\cal
  U}$, according to item (iv) of Definition \ref{dfn:io-system} we
have $x(t) = x_0$ if $t < t_0(u)$.

\medskip
Let us denote as ${\cal U}^{[t,t+s)}(x)$, $s>0$, the set of inputs $u\in
{\cal U}$ restricted to the interval $[t,t+s)$, such that $u^t\in x$.

\begin{prop}
  \label{prop:state-trans}
  The projection map $\Gamma$ determines the state transition function
  $\phi: X\times {\cal U}^{[t,t+s)}(x) \rightarrow X$, such
  that for any $u\in {\cal U}$ and any $s> 0$ there holds
  \begin{equation}\label{state-trans}
    x(t+s) = \phi\,(x(t), u^{[t,t+s)}).
  \end{equation}
\end{prop}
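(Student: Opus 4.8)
The plan is to build the transition function on representatives and then verify that the construction is independent of the representative chosen. Given a state $x\in X$ and a segment $w\in{\cal U}^{[t,t+s)}(x)$, pick any $u\in{\cal U}$ with $u^t\in x$ and $u^{[t,t+s)}=w$ (one exists by the definition of ${\cal U}^{[t,t+s)}(x)$), and set $\phi(x,w):=\Gamma(u^{t+s})$. With this definition the identity $x(t+s)=\phi(x(t),u^{[t,t+s)})$ is immediate from (\ref{Gamma}), since $x(t)=\Gamma(u^t)$ and $x(t+s)=\Gamma(u^{t+s})$. The entire content of the proposition therefore reduces to a well-definedness claim: if $u_1^t\sim u_2^t$ and $u_1^{[t,t+s)}=u_2^{[t,t+s)}$, then $u_1^{t+s}\sim u_2^{t+s}$. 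This is the general form of Lemma~\ref{lem:h-prolong}, and I would prove it directly from Definition~\ref{dfn:tequiv} applied at the time $t+s$; since equivalent inputs have equal image under $\Gamma$ (Proposition~\ref{prop:*equiv}), this settles well-definedness.

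First I would dispose of the output condition (ii) of Definition~\ref{dfn:tequiv} at time $t+s$. Let $\tilde{u}_1,\tilde{u}_2$ be any prolongations of $u_1^{t+s},u_2^{t+s}$ with $\tilde{u}_1^{[t+s,\infty)}=\tilde{u}_2^{[t+s,\infty)}$. Each $\tilde{u}_i$ is in particular a prolongation of $u_i^t$, and on $[t,t+s)$ both restrict to the common segment $w$, while on $[t+s,\infty)$ they coincide by assumption; hence $\tilde{u}_1^{[t,\infty)}=\tilde{u}_2^{[t,\infty)}$. Since $u_1^t\sim u_2^t$, condition (ii) at time $t$ yields $\tilde{y}_1^{[t,\infty)}=\tilde{y}_2^{[t,\infty)}$, and restricting to $[t+s,\infty)$ gives the required output equality.

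For condition (i), take any prolongation $\tilde{u}_1$ of $u_1^{t+s}$; regarding it as a prolongation of $u_1^t$ and using condition (i) of the equivalence $u_1^t\sim u_2^t$, I obtain a prolongation $\hat{u}_2$ of $u_2^t$ with $\hat{u}_2^{[t,\infty)}=\tilde{u}_1^{[t,\infty)}$. On $[t,t+s)$ this match forces $\hat{u}_2$ to agree with $\tilde{u}_1$, which restricts there to $u_1^{[t,t+s)}=w=u_2^{[t,t+s)}$; together with $\hat{u}_2^t=u_2^t$ this gives $\hat{u}_2^{t+s}=u_2^{t+s}$, so $\hat{u}_2$ is in fact a prolongation of $u_2^{t+s}$ satisfying $\hat{u}_2^{[t+s,\infty)}=\tilde{u}_1^{[t+s,\infty)}$. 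The symmetric direction is identical, so condition (i) holds at $t+s$ and the claim follows.

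The step I expect to be the real obstacle is exactly this verification inside condition (i): the equivalence at time $t$ only promises a matching prolongation after $t$, so one must check that such a match is automatically a prolongation of $u_2^{t+s}$ and not merely of $u_2^t$. This works only because the matched part covers the interval $[t,t+s)$ on which $u_1$ and $u_2$ carry the identical segment $w$; keeping the three intervals $(-\infty,t)$, $[t,t+s)$, $[t+s,\infty)$ straight, and invoking Lemma~\ref{lem:sigma-equiv} to strip away any dependence on the absolute time, is where the care is needed. Everything else is routine restriction--concatenation bookkeeping.
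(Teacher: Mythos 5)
Your proof is correct and follows essentially the same route as the paper: you define $\phi(x,w)=\Gamma(u^{t+s})$ on representatives and reduce the proposition to the well-definedness claim that $u_1^t\sim u_2^t$ together with $u_1^{[t,t+s)}=u_2^{[t,t+s)}$ implies $u_1^{t+s}\sim u_2^{t+s}$, which is exactly the paper's Lemma~\ref{lem:sprolong}. Your inline verification of conditions (i) and (ii) of Definition~\ref{dfn:tequiv} at time $t+s$ --- in particular the key observation that a prolongation of $u_2^t$ matching $\tilde{u}_1$ on $[t,\infty)$ is automatically a prolongation of $u_2^{t+s}$ because the two inputs carry the same segment on $[t,t+s)$ --- is precisely the argument the paper gives when proving that lemma.
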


\begin{lem}\label{lem:sprolong} For any $u_1,u_2\in{\cal U}$  and $s>0$,
  if $u_1^t\sim u_2^t$, then $u_1^t\vee u_2^{[t,t+s)}\in {\cal
    U}^{t+s}$ and $u_2^t\vee u_1^{[t,t+s)}\in {\cal U}^{t+s}$.  If, in
  addition, $u_1^{[t,t+s)} = u_2^{[t,t+s)}$ then $u_1^{t+s} \sim
  u_2^{t+s}$.
\end{lem}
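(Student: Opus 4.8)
The plan is to reduce every assertion to the equivalence of $u_1^t$ and $u_2^t$ at the earlier end time $t$, as codified in Definition~\ref{dfn:tequiv}, and to let the extra hypothesis $u_1^{[t,t+s)} = u_2^{[t,t+s)}$ bridge the gap between agreement on $[t+s,\infty)$ and agreement on the larger interval $[t,\infty)$. No genuine computation is involved; the entire content is careful bookkeeping of restriction intervals.

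For the two membership claims, I would first note that $u_2$ is itself a prolongation of $u_2^t$. Since $u_1^t \sim u_2^t$, the ``vice versa'' half of condition (i) of Definition~\ref{dfn:tequiv} supplies a prolongation $\tilde{u}_1$ of $u_1^t$ with $\tilde{u}_1^{[t,\infty)} = u_2^{[t,\infty)}$. Then $\tilde{u}_1$ agrees with $u_1$ on $(-\infty,t)$ and with $u_2$ on $[t,\infty)$, so its restriction to $(-\infty,t+s)$ is exactly $u_1^t \vee u_2^{[t,t+s)}$; hence $u_1^t \vee u_2^{[t,t+s)} = \tilde{u}_1^{t+s} \in {\cal U}^{t+s}$. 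The second membership follows by applying the forward half of condition (i) to the prolongation $u_1$ of $u_1^t$, producing a prolongation $\tilde{u}_2$ of $u_2^t$ whose restriction to $(-\infty,t+s)$ is $u_2^t \vee u_1^{[t,t+s)}$.

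For the equivalence $u_1^{t+s} \sim u_2^{t+s}$ under the additional hypothesis, I would verify the two conditions of Definition~\ref{dfn:tequiv} at end time $t+s$, using throughout the observation that any prolongation of $u_1^{t+s}$ is also a prolongation of $u_1^t$ (and likewise for index $2$), which is what legitimizes invoking the time-$t$ equivalence. For condition (i), given a prolongation $\tilde{v}_1$ of $u_1^{t+s}$, the time-$t$ equivalence produces a prolongation $\tilde{v}_2$ of $u_2^t$ with $\tilde{v}_1^{[t,\infty)} = \tilde{v}_2^{[t,\infty)}$; I then check that $\tilde{v}_2$ is in fact a prolongation of $u_2^{t+s}$ by computing $\tilde{v}_2^{[t,t+s)} = \tilde{v}_1^{[t,t+s)} = u_1^{[t,t+s)} = u_2^{[t,t+s)}$, the last equality being the added hypothesis. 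Restricting $\tilde{v}_1^{[t,\infty)} = \tilde{v}_2^{[t,\infty)}$ to $[t+s,\infty)$ gives the required matching, and the converse direction is symmetric.

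For condition (ii), given prolongations $\tilde{v}_1, \tilde{v}_2$ of $u_1^{t+s}, u_2^{t+s}$ with $\tilde{v}_1^{[t+s,\infty)} = \tilde{v}_2^{[t+s,\infty)}$, I would first upgrade this to agreement on all of $[t,\infty)$: on the gap $[t,t+s)$ the two prolongations equal $u_1^{[t,t+s)}$ and $u_2^{[t,t+s)}$ respectively, which coincide by hypothesis. Condition (ii) of the time-$t$ equivalence then yields $\tilde{y}_1^{[t,\infty)} = \tilde{y}_2^{[t,\infty)}$, and restricting to $[t+s,\infty)$ completes the verification. The main point to watch is precisely this interval bookkeeping --- ensuring that each reduction to the time-$t$ definition is valid --- and it is exactly here that the hypothesis $u_1^{[t,t+s)} = u_2^{[t,t+s)}$ is indispensable, since without it agreement on $[t+s,\infty)$ could not be promoted to agreement on $[t,\infty)$.
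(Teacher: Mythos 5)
Your proof is correct and follows essentially the same route as the paper's: both reduce the claim to the time-$t$ equivalence by observing that any prolongation of $u_i^{t+s}$ is a prolongation of $u_i^t$, and both use the hypothesis $u_1^{[t,t+s)} = u_2^{[t,t+s)}$ to promote agreement on $[t+s,\infty)$ to agreement on $[t,\infty)$. If anything, your write-up is more complete than the paper's, which constructs a single matching prolongation and leaves the verification of condition (ii) of Definition~\ref{dfn:tequiv} for arbitrary pairs of prolongations implicit, whereas you check both conditions of the definition at end time $t+s$ explicitly.
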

\begin{proof}
  The first part of the statement follows directly from item (i) of
  Definition \ref{dfn:tequiv}. To prove the second part, let us
  consider a prolongation $\tilde{u}_1$ of $u_1^{t+s}$.  Obviously,
  $\tilde{u}_1$ is also a prolongation of $u_1^t$.  Because $u_2^t\sim
  u_1^t$, there exists a prolongation $\tilde{u}_2$ of $u_2^t$, such
  that $\tilde{u}_2^{[t,\infty)} = \tilde{u}_1^{[t,\infty)}$, and
  $\tilde{y}_2^{[t,\infty)}=\tilde{y}_1^{[t,\infty)}$.  This
  proves the lemma, because $\tilde{u}_2^{[t,t+s)}=u_1^{[t,t+s)} =
  u_2^{[t,t+s)}$ and hence $\tilde{u}_2$ is a prolongation of
  $u_2^{t+s}$.
\end{proof}

\begin{proof}[Proof of Proposition \ref{prop:state-trans}]
  From (\ref{Gamma}), we have $x(t) = \Gamma(u^t)$, $x(t+s) =
  \Gamma(u^{t+s}) = \Gamma(u^t\vee u^{[t,t+s)})$. Due to Lemma
  \ref{lem:sprolong}, for any $v^t\sim u^t$, such that $v^{[t,t+s)} =
  u^{[t,t+s)}$, it holds $v^{t+s}\sim u^{t+s}$, and we can conclude
  that $\Gamma(u^t\vee u^{[t,t+s)})$ depends on $\Gamma(u^t)$, but not
  on $u^t$ directly, thus we can express $x(t+s)$ according to
  (\ref{state-trans}).
\end{proof}

\begin{prop}
  \label{prop:semi-group}
  The state transition function comply with the {\em semi-group property}
  \begin{equation}\label{semi-group}
    \phi\,(x(t), u^{[t,t+s)}) = \phi\,(\phi\,(x(t), u^{[t,t+s')}),u^{[t+s',t+s)}),\;
    s \geq s'\geq 0.
  \end{equation}
\end{prop}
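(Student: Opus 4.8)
The plan is to read off all three instances of $\phi$ from a single fixed input and rely on the associativity of concatenation. Fix any $u\in{\cal U}$ and times $t$, $s'$, $s$ with $s\geq s'\geq 0$, and set $x(t)=\Gamma(u^t)$. Since the restriction of $u$ splits as $u^{[t,t+s)} = u^{[t,t+s')}\vee u^{[t+s',t+s)}$, the inputs telescope:
\begin{equation}\nonumber
u^{t+s} = u^t\vee u^{[t,t+s)} = \bigl(u^t\vee u^{[t,t+s')}\bigr)\vee u^{[t+s',t+s)} = u^{t+s'}\vee u^{[t+s',t+s)}.
\end{equation}
Applying (\ref{state-trans}) to the whole interval gives $x(t+s)=\phi(x(t),u^{[t,t+s)})$, which is the left-hand side of (\ref{semi-group}).

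For the right-hand side I would apply (\ref{state-trans}) twice along the same input $u$. First, to the interval $[t,t+s')$ to obtain the intermediate state $x(t+s')=\Gamma(u^{t+s'})=\phi(x(t),u^{[t,t+s')})$. Then, using that $u^{t+s'}$ is a representative of the class $x(t+s')$, so that $u^{[t+s',t+s)}\in{\cal U}^{[t+s',t+s)}(x(t+s'))$, I would apply (\ref{state-trans}) again starting from time $t+s'$ to get $x(t+s)=\phi(x(t+s'),u^{[t+s',t+s)})$. Substituting the formula for $x(t+s')$ yields $x(t+s)=\phi(\phi(x(t),u^{[t,t+s')}),u^{[t+s',t+s)})$, which is the right-hand side of (\ref{semi-group}). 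Equating the two expressions for the single state $x(t+s)$ establishes the identity.

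The crux of the argument is not a computation but the well-definedness already secured by Proposition~\ref{prop:state-trans}: each application of $\phi$ depends only on the state (the equivalence class) and the applied segment, not on the particular representative input. This is what lets me legitimately feed the class $x(t+s')$ --- rather than the concrete input $u^{t+s'}$ --- into the second transition; the consistency across the two steps is guaranteed because both are realized by restrictions of one and the same $u$.

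The only point requiring extra care, which I expect to be the main (and minor) obstacle, is the endpoint behavior $s'=0$ and $s'=s$, where one of the two segments degenerates to an empty interval. Proposition~\ref{prop:state-trans} is stated for $s>0$, so to cover $s'\geq 0$ one must adopt the convention that an empty input segment acts as the identity on states, $\phi(x,u^{[t,t)})=x$, consistent with $\Gamma(u^t)=x(t)$. With this convention the degenerate cases reduce (\ref{semi-group}) to a tautology, and the generic case $0<s'<s$ is handled by the telescoping argument above.
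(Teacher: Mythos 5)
Your proof is correct and follows essentially the same route as the paper: you apply Proposition~\ref{prop:state-trans} along a single fixed input $u$, use the concatenation identity $u^t\vee u^{[t,t+s)} = u^{t+s'}\vee u^{[t+s',t+s)}$, and equate the two expressions for $x(t+s)$, relying on the well-definedness of $\phi$ established there. Your explicit handling of the degenerate endpoints $s'=0$ and $s'=s$ via the convention $\phi(x,u^{[t,t)})=x$ is a small point the paper passes over silently, but it does not change the argument.
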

\begin{proof}
As in the proof of Proposition \ref{prop:state-trans} we have
\begin{align*}
&x(t+s') = \Gamma(u^t\vee u^{[t,t+s')}) = \phi\,(x(t), u^{[t,t+s')}),\\
&\Gamma(u^t\vee u^{[t,t+s)}) = \phi\,(x(t), u^{[t,t+s)}),\\
&\Gamma(u^{t+s'}\vee u^{[t+s',t+s)}) = \phi\,(x(t+s'), u^{[t+s',t+s)}).
\end{align*}
Because $u^t\vee u^{[t,t+s)} = u^{t+s'}\vee u^{[t+s',t+s)}$,
the above equations give (\ref{semi-group}).
\end{proof}

\begin{lem}
  \label{lem:read-out}
  Let $u_1^t, u_2^t\in x(t)$, $y_1 = F[u_1],\,y_2 = F[u_2]$.  If
  $u_1(t) = u_2(t)$ then $y_1(t) = y_2(t)$.
\end{lem}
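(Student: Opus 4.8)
The plan is to use the single shared value $u_1(t) = u_2(t)$ to bridge the mismatch between how the equivalence relation is defined and how the output at $t$ must be recovered. The subtlety, and what I expect to be the main obstacle, is that Definition~\ref{dfn:tequiv} compares inputs via their restrictions to the \emph{open} interval $(-\infty,t)$ while guaranteeing agreement of outputs on the \emph{closed} interval $[t,\infty)$; the value of $u_1,u_2$ at the boundary point $t$ is invisible to the equivalence class $x(t)$, so the hypothesis $u_1(t) = u_2(t)$ must be brought in separately and combined with the determinism of $F$ to pin down $y_1(t)$ and $y_2(t)$.

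First I would note that the full inputs $u_1, u_2 \in {\cal U}$ are themselves prolongations of $u_1^t$ and $u_2^t$ in the sense of Definition~\ref{dfn:prolong}. Since $u_1^t \sim u_2^t$, I apply item~(i) of Definition~\ref{dfn:tequiv} to the prolongation $u_1$ of $u_1^t$: there exists a prolongation $\tilde{u}_2$ of $u_2^t$ with $\tilde{u}_2^{[t,\infty)} = u_1^{[t,\infty)}$. By item~(ii) of the same definition, the corresponding outputs agree on $[t,\infty)$, i.e. $\tilde{y}_2^{[t,\infty)} = y_1^{[t,\infty)}$ where $\tilde{y}_2 = F[\tilde{u}_2]$. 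Evaluating at the point $t$, which lies in $[t,\infty)$, gives $\tilde{y}_2(t) = y_1(t)$.

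The final step relates $\tilde{y}_2$ back to $y_2$. Both $\tilde{u}_2$ and $u_2$ are prolongations of $u_2^t$, so they coincide on $(-\infty,t)$; moreover $\tilde{u}_2(t) = u_1(t) = u_2(t)$, the first equality from $\tilde{u}_2^{[t,\infty)} = u_1^{[t,\infty)}$ and the second from the hypothesis. Hence $\tilde{u}_2$ and $u_2$ agree on all of $(-\infty,t]$, and the determinism condition (item~(ii) of Definition~\ref{dfn:io-system}) yields $F[\tilde{u}_2](t') = F[u_2](t')$ for all $t' \leq t$, in particular $\tilde{y}_2(t) = y_2(t)$. Chaining the two identities gives $y_1(t) = \tilde{y}_2(t) = y_2(t)$, which is the claim. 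The only place requiring care is the bookkeeping at the boundary $t$, ensuring that the open-interval agreement of prolongations plus the single pointwise hypothesis exactly furnish the closed-interval agreement that determinism consumes.
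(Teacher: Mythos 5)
Your proof is correct and follows essentially the same route as the paper's: you construct the prolongation $\tilde{u}_2$ of $u_2^t$ agreeing with $u_1$ on $[t,\infty)$ to get $y_1(t)=\tilde{y}_2(t)$, then use the pointwise hypothesis $u_1(t)=u_2(t)$ together with determinism (item (ii) of Definition~\ref{dfn:io-system}) to get $\tilde{y}_2(t)=y_2(t)$. Your explicit attention to the open/closed boundary bookkeeping at $t$ is a welcome clarification of a point the paper's proof uses implicitly.
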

\begin{proof}
  Let $\tilde{u}_2$ be a prolongation of $u_2^t$, such that
  $\tilde{u}_2^{[t,\infty)}= u_1^{[t,\infty)}$. It exists, because
  $u_1(t)\sim u_2(t)$, and $\tilde{y}_2^{[t,\infty)}=
  y_1^{[t,\infty)}$ holds true. Thus, $y_1(t) =
  \tilde{y_2}(t)$. Because $\tilde{u}_2(t) = u_1(t)$, and $u_2(t) =
  u_1(t)$, we have $\tilde{u}_2^{(-\infty,t]} =
  u_2^{(-\infty,t]}$. According to item (ii) of Definition
  \ref{dfn:io-system}, from $\tilde{u}_2^{(-\infty,t]} =
  u_2^{(-\infty,t]}$ follows $\tilde{y}_2^{(-\infty,t]} =
  y_2^{(-\infty,t]}$. Hence, $y_2(t) = \tilde{y}_2(t)$.
\end{proof}

\begin{prop}
  \label{prop:read-out}
  For any $u\in{\cal U}$ the output $y(t)$ is determined by the state
  $x(t)$ and the output value $u(t)$,
\begin{equation}\label{y-read-out}
  y(t) =  f(x(t),u(t)).
\end{equation}
\end{prop}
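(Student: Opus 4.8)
The plan is to read the proposition off almost directly from Lemma~\ref{lem:read-out}, which already isolates the essential fact. First I would use that lemma to \emph{define} the read-out function $f$. Fix a state $x \in X$ and a value $c$ attained by some admissible input at its end time. Among all inputs $u \in {\cal U}$ with $u^t \in x$ (equivalently $\Gamma(u^t) = x$) and $u(t) = c$, I would pick any representative and set $f(x, c) := F[u](t)$.

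The crucial step is to check that this assignment is independent of the chosen representative, so that $f$ is genuinely a function of the pair $(x, c)$. This is exactly what Lemma~\ref{lem:read-out} guarantees: whenever $u_1^t, u_2^t \in x$ and $u_1(t) = u_2(t)$, one has $F[u_1](t) = F[u_2](t)$. Consequently the value $F[u](t)$ depends only on $x$ and $c$, and $f$ is well-defined on the relevant pairs $(x,c)$.

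It then remains to verify the formula for an arbitrary input. Given $u \in {\cal U}$, I would take $x(t) = \Gamma(u^t)$ and $c = u(t)$; by the very construction of $f$ this yields $f(x(t), u(t)) = F[u](t) = y(t)$, which is precisely~(\ref{y-read-out}).

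I expect the only genuinely delicate point to be conceptual rather than technical: the state $x(t)$ is the class of the restriction $u^t$ to $(-\infty, t)$, which \emph{omits} the end point $t$, so $x(t)$ by itself carries no information about the present value $u(t)$. This is exactly why the output cannot be expressed through the state alone and why $u(t)$ must enter as a second argument of $f$. Apart from keeping track of this distinction between $u^t$ and $u(t)$, the argument is a direct application of Lemma~\ref{lem:read-out}.
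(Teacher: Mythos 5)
Your proposal is correct and takes essentially the same route as the paper: both derive (\ref{y-read-out}) as a direct consequence of Lemma~\ref{lem:read-out}, which supplies exactly the independence of $F[u](t)$ from the choice of representative $u^t \in x(t)$ with fixed end value $u(t)$. Your version merely makes explicit the well-definedness of $f$ (and correctly flags that $u^t$ excludes the point $t$, so $u(t)$ must be a separate argument), which the paper leaves implicit.
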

\begin{proof}
  Because the system is deterministic, the output $y^{(-\infty,t]}$
  depends only on $u^{(-\infty,t]} = u^t\vee u(t)$. According to Lemma
  \ref{lem:read-out}, $y(t)$ depends on $x(t)$, but not on $u^t\in
  x(t)$ directly. Thus, (\ref{y-read-out}) holds true.
\end{proof}

\noindent
Proposition \ref{prop:read-out} can be strengthened assuming that the
inputs are continuous functions of time.

\begin{prop}\label{continuous_inputs}
  Let $U$ be a metric space and all inputs $u\in{\cal U}$ be
  continuous functions of time.  Then from $u_1^t \sim u_2^t$ follows
  that $u_1(t) = u_2(t)$, and
\begin{equation}\label{uy-read-out}
  y(t) = f(x(t)),\; u(t) = g(x(t)).
\end{equation}
\end{prop}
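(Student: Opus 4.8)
The plan is to reduce the proposition to Proposition~\ref{prop:read-out} by first proving the pointwise endpoint identity $u_1(t)=u_2(t)$, and then using it to strip the explicit input argument out of the read-out map $f(x(t),u(t))$. The whole content of the statement lies in the first claim; once the endpoint values are pinned down by the state, both formulas in (\ref{uy-read-out}) drop out.

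First I would show that $u_1^t\sim u_2^t$ forces $u_1(t)=u_2(t)$. The point to keep in mind is that, by Definition~\ref{dfn:prolong}, the domain of $u^t$ is the \emph{open} ray $(-\infty,t)$, so the value $u(t)$ is not literally part of the data of $u^t$; continuity is what recovers it. Concretely, take $u_1$ itself as a prolongation of $u_1^t$. By item~(i) of Definition~\ref{dfn:tequiv} there is a prolongation $\tilde{u}_2$ of $u_2^t$ with $u_1^{[t,\infty)}=\tilde{u}_2^{[t,\infty)}$; evaluating this equality at the single point $t$ gives $u_1(t)=\tilde{u}_2(t)$. Since $\tilde{u}_2$ agrees with $u_2$ on $(-\infty,t)$ and both are continuous, the limit $t'\to t^-$ yields $\tilde{u}_2(t)=u_2(t)$, and chaining the two equalities gives $u_1(t)=u_2(t)$.

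Next I would derive the two read-out formulas. Because $x(t)=\Gamma(u^t)$ and equivalent inputs belong to the same class, any two representatives $u_1^t,u_2^t\in x(t)$ satisfy $u_1^t\sim u_2^t$, so by the previous step $u_1(t)=u_2(t)$. Hence the endpoint value $u(t)$ is determined by $x(t)$ alone, which defines the map $g$ with $u(t)=g(x(t))$. Substituting into the read-out relation $y(t)=f(x(t),u(t))$ of Proposition~\ref{prop:read-out}, the right-hand side becomes $f(x(t),g(x(t)))$, a function of $x(t)$ only; writing the resulting one-argument map again as $f$ gives $y(t)=f(x(t))$, as claimed.

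The only delicate step is the first one, and it is delicate precisely because the value at $t$ is excluded from $u^t$ by definition. The argument succeeds because the matching prolongations furnished by Definition~\ref{dfn:tequiv}(i) agree on the \emph{closed} ray $[t,\infty)$, so their common value at $t$ can be compared against the left limits of the two original inputs. Without continuity this identification breaks down, and indeed the endpoint value need not be a function of the state for discontinuous inputs; this is exactly why the hypotheses that $U$ is a metric space and that all inputs are continuous are invoked here.
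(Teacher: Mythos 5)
Your proposal is correct and follows essentially the same route as the paper: both hinge on the observation that $u(t)=\lim_{t'\to t^-}u^t(t')$ is recovered by continuity even though $t$ is excluded from the domain of $u^t$, combined with item~(i) of Definition~\ref{dfn:tequiv} forcing matching prolongations to agree at $t$; the paper merely phrases this step as a contradiction, while you argue it directly and also spell out the substitution into Proposition~\ref{prop:read-out} that the paper leaves implicit. No gaps.
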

\begin{proof}
  Because $\underset{t'\rightarrow t}\lim u^t(t') = u(t)$, the
  inequality $u_1(t) \neq u_2(t)$ and the equivalence $u_1^t \sim
  u_2^t$ contradict each other, due to item (i) of Definition
  \ref{dfn:tequiv}. Thus, for all $u^t\in x(t)$ the value $u(t)$ must be
  the same, which proves the proposition.
\end{proof}

\end{document}